\newtheorem{thm}{Theorem}[section]
\newtheorem{defi}[thm]{Definition}
\newtheorem{lem}[thm]{Lemma}
\newtheorem{cor}[thm]{Corollary}
\newtheorem{ex}[thm]{Example}
\newtheorem{rmk}[thm]{Remark}
\newtheorem{const}[thm]{Construction}
\def\Aut{{\rm Aut}}
\def\Hom{{\rm Hom}}
\def\ord{{\rm ord}}
\def\min{{\rm min}}
\def\max{{\rm max}}
\def\inf{{\rm inf}}
\def\sup{{\rm sup}}
\def\lim{{\rm lim}}
\def\Supp{{\rm Supp}}
\def\gr{{\rm gr}}
\def\pr{{\rm pr}}
\def\id{{\rm id}}
\def\wt{{\rm wt}}
\def\Diff{{\rm Diff}}
\def\triv{{\rm triv}}
\def\vol{{\rm vol}}
\def\Val{{\rm Val}}
\def\DH{{\rm DH}}
\def\Fut{{\rm Fut}}
\def\FL{{\rm FL}}
\def\SL{{\rm SL}}
\def\red{{\rm red}}
\def\QM{{\rm QM}}
\def\LC{{\rm LC}}
\def\Bl{{\rm Bl}}
\def\BF{\mathbf{F}}
\def\BP{\mathbf{P}}
\def\BV{\mathbf{V}}
\def\BH{\mathbf{H}}
\def\tl{\tilde{l}}
\def\tE{\tilde{E}}
\def\tQ{\tilde{Q}}
\def\tH{\tilde{H}}
\def\tX{\tilde{X}}
\def\tY{\tilde{Y}}
\newcommand{\IA}{{\mathbb A}}
\newcommand{\IC}{{\mathbb C}}
\newcommand{\IG}{{\mathbb G}}
\newcommand{\Ik}{{\mathbbm k}}
\newcommand{\IN}{{\mathbb N}}
\newcommand{\IP}{{\mathbb P}} 
\newcommand{\IQ}{{\mathbb Q}} 
\newcommand{\IR}{{\mathbb R}}
\newcommand{\IT}{{\mathbb T}}
\newcommand{\IZ}{{\mathbb Z}}
\newcommand{\CF}{{\mathcal F}}
\newcommand{\CL}{{\mathcal L}}
\newcommand{\CO}{{\mathcal O}}
\newcommand{\CX}{{\mathcal X}}
\newcommand{\bCX}{{\overline{\mathcal{X}}}}
\newcommand{\bCL}{{\overline{\mathcal{L}}}}
\newcommand{\seq}{\subseteq}
\newcommand{\la}{\langle}
\newcommand{\ra}{\rangle}
\newcommand{\bu}{\bullet}
\newcommand{\lam}{\lambda}
\newcommand{\D}{\Delta}
\title{A valuative criterion of K-polystability}
\author{Linsheng Wang}
\address{Shanghai Center for Mathematical Sciences, Fudan University, Shanghai, 200438, China}
\address{Department of Mathematics, Nanjing University, Nanjing, 210008, China}
\curraddr{}
\email{linsheng\_wang@outlook.com}
\thanks{}
\keywords{}
\date{}
\dedicatory{}
\begin{document}

\begin{abstract}
For any log Fano pair with a torus action, we associate a computable invariant to it, such that the pair is (weighted) K-polystable if and only if this invariant is greater than one. As an application, we present examples of Fano varieties admitting $g$-solitons for any weight function $g$. 
\end{abstract}

\maketitle

\section{Introduction}

K-stability, first introduced by Tian \cite{Tia97} and later reformulated algebraically by Donaldson \cite{Don02}, is an algebraic condition detecting the existence of K\"ahler-Einstein metrics on Fano manifolds. Besides, uniform K-stability (\cite{FO16,Li19}) is a stronger stability notion that is determined by some numerical invariant (called $\delta$-invariant) of a Fano manifold. 
Let $(X,\D)$ be a log Fano pair. According to \cite[Theorem B]{BJ20}, it is uniformly K-stable if and only if $\delta(X,\D)>1$. In this case, it is K-stable. Additionally, by \cite[Remark A.3]{XZ19}, the log Fano pair is reduced uniformly K-stable if and only if $\delta^\red_{\IT}(X,\D)>1$ for some maximal torus $\IT\seq \Aut(X,\D)$. In this case, it is K-polystable. By the celebrated work of \cite[Theorem 1.6]{LXZ22}, uniform K-stability is equivalent to K-stability, and reduced uniform K-stability is equivalent to K-polystability. 
Therefore, in the study of the K-stability problem of a specific log Fano pair $(X,\D)$, it is essential to compute the invariant $\delta(X,\D)$ or $\delta^\red_{\IT}(X,\D)$. 

In general, computing $\delta(X,\D)$ can be challenging.  Nevertheless, the Abban-Zhuang estimate \cite[Theorem 3.2]{AZ22} has enabled the demonstration of $\delta(X,\D)>1$ in numerous cases, see for example \cite{ACC+}. However, to the best of the author's knowledge, there is no explicit result concerning the computation or estimation of the reduced delta invariant $\delta^\red_{\IT}(X,\D)$. In order to test K-polystability, Tian's alpha invariant \cite{Tia87} and equivariant K-stability \cite{Zhu21} are very useful. 

Instead of the reduced delta invariant, we will present a computable invariant that characterizes K-polystability, which stems from the Abban-Zhuang estimate \cite[Theorem 3.2]{AZ22}. 
Let $(X,\D)$ be a log Fano pair and $\IT\seq \Aut(X,\D)$ be a subtorus of rank $r$. Any one-parameter subgroup $\xi \in N=\Hom(\IG_m,\IT)$ determines a toric divisor $E_\xi$ over $X$. We may choose a basis $\{\xi_1,\cdots, \xi_r\}$ of the lattice $N$. 
Then taking adjunctions and refinements by the toric divisors $E_{\xi_1},\cdots, E_{\xi_r}$ succeedingly (see Construction \ref{Construction: toric divisor sequence} for details), we get a Fano type klt pair $(X_r,\D_r)$ and a $\IN\times \IN^r$-graded linear series $W^{X_r}_\bu$ on it. 
\begin{thm}[Theorem \ref{Theorem: main thm}]
\label{Theorem: Intro. main thm}
If $\IT\seq \Aut(X,\D)$ is a maximal subtorus and $\Fut|_N=0$, then the log Fano pair $(X,\D)$ is K-polystable (K-semistable) if and only if 
$$\delta(X_r, \D_{X_r}; W^{X_r}_\bu)> (\ge) 1. $$
\end{thm}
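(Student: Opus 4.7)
The plan is to reduce the theorem to a valuation-theoretic identity. By the combined work of \cite{LXZ22} and \cite[Remark A.3]{XZ19}, the log Fano pair $(X,\D)$ is K-polystable (resp.\ K-semistable) if and only if $\delta^\red_\IT(X,\D) > 1$ (resp.\ $\ge 1$) for a maximal subtorus $\IT \seq \Aut(X,\D)$. Thus it suffices to establish the equality
\[
\delta^\red_\IT(X,\D) \;=\; \delta(X_r,\D_{X_r}; W^{X_r}_\bu)
\]
under the assumption $\Fut|_N = 0$; the K-polystable and K-semistable statements of Theorem~\ref{Theorem: Intro. main thm} then follow simultaneously.

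For the inequality ``$\ge$'', I would iterate the Abban--Zhuang estimate \cite[Theorem 3.2]{AZ22} along the sequence of $\IT$-invariant toric divisors $E_{\xi_1}, \ldots, E_{\xi_r}$. At each step, refining $(X_{i-1},\D_{X_{i-1}})$ with its linear series $W^{X_{i-1}}_\bu$ by the toric divisor $E_{\xi_i}$ produces $(X_i,\D_{X_i})$ together with the refined series $W^{X_i}_\bu$, and Abban--Zhuang delivers
\[
\delta\bigl(X_{i-1},\D_{X_{i-1}};W^{X_{i-1}}_\bu\bigr) \;\ge\; \min\!\left(\frac{A(E_{\xi_i})}{S(E_{\xi_i})},\ \delta\bigl(X_i,\D_{X_i};W^{X_i}_\bu\bigr)\right).
\]
After chaining these estimates, the only way for the right-hand side to fall below $\delta(X_r,\D_{X_r};W^{X_r}_\bu)$ is through one of the toric factors $A(E_{\xi_i})/S(E_{\xi_i})$. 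However, passing to the reduced invariant $\delta^\red_\IT$ involves a supremum over torus twists $\xi \in N_\IR$, and twisting $\wt_{E_{\xi_i}}$ by $-\xi_i$ sends it to the trivial valuation, so its reduced $A/S$ ratio is effectively unbounded. The hypothesis $\Fut|_N = 0$ is what makes these twists compatible with the asymptotic S-invariant normalization, ensuring the toric contributions drop out cleanly.

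For the reverse inequality ``$\le$'', I would argue that every $\IT$-invariant divisorial valuation $v$ over $X$ decomposes, up to a suitable torus twist, into a toric part lying in $N_\IR$ and a transverse part recorded by the iterated refinement; the transverse part is precisely a divisorial valuation on $X_r$, and its $A/S$ ratio against $W^{X_r}_\bu$ equals the twist-optimized $A/S$ ratio of $v$ on $X$. Taking the infimum over $v$ on both sides then matches $\delta^\red_\IT(X,\D)$ with $\delta(X_r,\D_{X_r};W^{X_r}_\bu)$. The main obstacle I anticipate lies in making this decomposition precise at the level of equality (rather than only inequality): one must show that \emph{every} divisorial valuation on $X_r$ computed against $W^{X_r}_\bu$ lifts to a $\IT$-invariant divisorial valuation on $X$ with matching log discrepancy and reduced S-invariant, which requires careful control over the Newton--Okounkov combinatorics encoded by the chosen basis $\{\xi_1,\ldots,\xi_r\}$ of $N$ and over how torus twists interact with successive refinements. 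The maximality of $\IT$ enters at this point, as it guarantees no further torus direction can be hidden inside the transverse valuations on $X_r$.
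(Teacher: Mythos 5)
Your overall strategy (reduce to $\delta^{\red}_{\IT}$ via \cite{XZ19,LXZZ22}, transport valuations through the chain of toric refinements, and use $\Fut|_N=0$ to control twists) points in the same direction as the paper, and your ``$\le$'' direction is essentially the paper's Lemma \ref{Lemma: A=A, S=S}: the isomorphism $\Val_{X_r}\times N_\IR\cong\Val^{\IT}_X$ preserving $A$ and $S$, built from Lemma \ref{Lemma: plt Fano type model}. But the ``$\ge$'' direction has a genuine gap. The Abban--Zhuang estimate bounds $\delta_{\IT}$ (not $\delta^{\red}_{\IT}$) from below by a minimum that includes the toric terms $A(E_{\xi_i})/S(E_{\xi_i})$, and under $\Fut|_N=0$ each of these equals exactly $1$. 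Your proposed fix --- that these terms ``drop out'' upon passing to the reduced invariant because the reduced $A/S$ ratio of a toric valuation is ``effectively unbounded'' --- is not correct: since $\Fut|_N=0$ forces $A_{X,\D}(\wt_\eta)=S(\wt_\eta)$ for every nontrivial $\eta\in N_\IR$, the twist-supremum for a toric valuation is exactly $1$, not unbounded. More fundamentally, there is no ``reduced Abban--Zhuang estimate'' in which the toric factors are replaced by their twist-optimized ratios; the standard estimate caps the chained lower bound at $\min\{1,\delta(X_r,\D_{X_r};W^{X_r}_\bu)\}$ and therefore can only ever certify K-semistability, never $\delta^{\red}_{\IT}>1$. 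Proving such a reduced version would itself be the hard content of the theorem.

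The paper avoids this by arguing the contrapositive with a \emph{specific} valuation rather than an infimum estimate: if $(X,\D)$ is K-semistable but not K-polystable, then by \cite{XZ19,LXZ22} there is a nontrivial $\IT$-invariant valuation $w=v_\xi$ with $\FL(w)=0$ and $v\in\Val_{X_r}$ nontrivial; using $\Fut|_N=0$ together with $A(v_\xi)=A(v)+\theta_\xi(v)$ and $S(v_\xi)=S(v)+\theta_\xi(v)$ one untwists $w$ to land on $v$ with $A_{X_r,\D_r}(v)/S(W^{X_r}_\bu;v)=1$, contradicting $\delta(X_r,\D_{X_r};W^{X_r}_\bu)>1$. (The Abban--Zhuang machinery enters only through its sharpness, Theorem \ref{Theorem: Sharpness of AZ}, for the K-semistable implication.) A second, smaller issue: you propose to prove the full identity $\delta^{\red}_{\IT}(X,\D)=\delta(X_r,\D_{X_r};W^{X_r}_\bu)$, which is strictly stronger than the theorem and than what the paper establishes; the paper only controls the two quantities at the threshold value $1$, where the minimizing/destabilizing valuations have the special structure above. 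Away from the threshold the supremum over twists need not be attained at $\xi=0$, so the claimed equality would require an additional argument that your sketch does not supply.
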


We denote by $R_\bu = R(X,\D)$ the anti-canonical ring of $(X,\D)$ and by $\BP\seq M_\IR$ the moment polytope of the $\IT$-action on $R_\bu$. The assumption $\Fut|_N=0$ implies that $A_{X,\D}(X_1) = S(R_\bu; X_1)$ for any $X_1:=E_\xi,\, \xi\in N$. Hence if we use the equivariant Abban-Zhuang estimate \cite[Theorem 4.6]{MW23} by refining to $X_1$, we will get an inequality like 
\begin{eqnarray*}
\delta_{p,\IT}(X,\D;R_\bu) \ge 
\min\Big\{1, \mathop{\inf}_{p_1\mapsto p} \delta_{p_1,\IT}(X_1,\D_{X_1};W^{X_1}_\bu)\Big\}, 
\end{eqnarray*} 
for any $\IT$-invariant closed point $p\in C_X(X_1)$, where the infimum runs over all the $\IT$-invariant closed points $p_1\in X_1$ mapping to $p\in X$. The dimension of the $\IT$-action decreases by one upon restriction to $X_1$, and the restriction of other toric divisors to $X_1$ remains toric. Therefore, we can successively refine by toric divisors and eventually obtain
\begin{eqnarray*}
\delta_{p,\IT}(X,\D;R_\bu) \ge 
\min\Big\{1, \mathop{\inf}_{p_r\mapsto p} \delta_{p_r}(X_r,\D_{X_r};W^{X_r}_\bu)\Big\}, 
\end{eqnarray*} 
where the $\IT$-action on $X_r$ is trivial and the infimum runs over all the closed points $p_r\in X_r$ mapping to $p\in X$. If for any $\IT$-invariant closed point $p\in X$ we can find such $X_r$ whose center on $X$ containing $p$ such that $\delta_{p_r}(X_r,\D_{X_r};W^{X_r}_\bu) \ge 1$ for any closed point $p_r\in X_r$ mapping to $p$, then $(X,\D)$ is K-semistable. Moreover, in many examples, we have a stronger condition 
$$\delta_{p_r}(X_r,\D_{X_r};W^{X_r}_\bu) > 1, $$
and it's natural to inquire whether we could extract further insights from it. Theorem \ref{Theorem: Intro. main thm} states that this leads to $(X,\D)$ being K-polystable.

\begin{rmk}\rm 
Furthermore, Theorem \ref{Theorem: Intro. main thm} offers the advantage that, to test K-(semi/poly)stability of a log Fano pair, it suffices to calculate the local delta invariant at points solely on one subvariety $C_X(X_r) \subseteq X$. There is no necessity to consider other $\IT$-invariant points. 
\end{rmk}
\begin{rmk}\rm 
Alternate forms of Theorem \ref{Theorem: Intro. main thm} have been used in some explicit examples \cite{MW23,MW24,LW24}. We also remark that an analogous statement of Theorem \ref{Theorem: Intro. main thm} for blow-up of projective bundles with $\IG_m$-action is proved by \cite{Mal24} independently.
\end{rmk}

Theorem \ref{Theorem: Intro. main thm} finds some of its inspiration from the following result about the sharpness of Abban-Zhuang estimate (see \cite[Theorem 3.2]{AZ22} or \cite[Theorem 4.6]{MW23}). 
Let $f:(X, \D)\to U$ be a $\IT$-equivariant projective morphism, where $U$ is a quasi-projective variety and $(X,\D)$ is a klt pair both admitting $\IT$-actions. 
We fix a $\IT$-invariant subvariety $Z\seq X$. 
\begin{thm}[Theorem \ref{Theorem: Sharpness of AZ}]
\label{Theorem: intro. sharpness AZ}
Let $F$ be a $\IT$-invariant plt-type divisor over $X$ with $C_X(F)\supseteq Z$. We denote by $\pi: Y \to X$ the associated plt-type blowup, and by $\D_F = \Diff_{F}(\D_Y)$, $\D_Y=\pi_*^{-1}\D$. Then for any $\IT$-invariant multi-graded linear series $V_\bu$ on $X$ with $F$-refinement $W_{\bu}$, we have 
\begin{eqnarray}
\label{Inequality: intro. Abban-Zhuang estimate}
\delta_{Z,\IT}(X,\D;V_\bu) \ge 
\min\Big\{\frac{A_{X,\D}(F)}{S(V_\bu;F)}, 
\mathop{\inf}_{Z'\mapsto Z} \delta_{Z',\IT}(F,\D_F;W_\bu)\Big\}, 
\end{eqnarray}
where the infimum runs over all $\IT$-invariant closed subvarieties $Z'\seq F$ 
(with $\dim Z' = \dim Z$) 
mapping to $Z\seq X$. 
Moreover, if $\ord_F$ is a minimizer of $\delta_{Z,\IT}(X,\D;V_\bu)$, then we have 
\begin{eqnarray}
\label{Inequality: intro. sharpness AZ}
\frac{A_{X,\D}(F)}{S(V_\bu;F)} =
\delta_{Z,\IT}(X,\D;V_\bu) \le 
\mathop{\inf}_{Z'\mapsto Z} \delta_{Z',\IT}(F,\D_F;W_\bu). 
\end{eqnarray}
\end{thm}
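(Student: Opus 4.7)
Inequality \eqref{Inequality: intro. Abban-Zhuang estimate} is the equivariant Abban-Zhuang estimate of \cite[Theorem 4.6]{MW23}, so the content lies in the sharpness assertion \eqref{Inequality: intro. sharpness AZ}. My plan is to probe the minimizer $\ord_F$ by quasi-monomial perturbations pointing out of $F$ and extract from the minimizing condition a pointwise inequality on $F$. Set $\delta := \delta_{Z,\IT}(X,\D;V_\bu)$. The first equality in \eqref{Inequality: intro. sharpness AZ} is immediate, since the hypothesis that $\ord_F$ computes $\delta$ says exactly $\delta = A_{X,\D}(F)/S(V_\bu;F)$.

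For the second inequality, let $w$ be an arbitrary $\IT$-invariant quasi-monomial valuation on $F$ whose center has image contained in $Z$. Choose a $\IT$-equivariant log resolution of $(Y, F + \D_Y)$ on which $w$ becomes monomial, and let $\tilde w$ denote its natural monomial extension to $Y$. For $b > 0$ consider the quasi-monomial combination
$$v_{1,b} := \ord_F + b\,\tilde w,$$
which is a $\IT$-invariant valuation on $X$ with center containing $Z$, and hence admissible for the computation of $\delta$.

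The adjunction formula for log discrepancies of quasi-monomial valuations gives
$$A_{X,\D}(v_{1,b}) = A_{X,\D}(F) + b\,A_{F,\D_F}(w),$$
while the compatible-basis computation for the $F$-refinement, applied to the filtration $\CF^t V_m := \{s \in V_m : \ord_F(s) \ge t\}$, produces the exact identity
$$S(V_\bu; v_{1,b}) = S(V_\bu; F) + b\,S(W_\bu; w),$$
where $S(W_\bu;w)$ denotes the expected vanishing of $w$ on the multi-graded linear series $W_\bu$, defined as the natural integral of $S(W_{t,\bu};w)$ against the measure $\vol(W_{t,\bu})\,dt/\vol(V_\bu)$. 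Substituting into $A_{X,\D}(v_{1,b})/S(V_\bu;v_{1,b}) \ge \delta = A_{X,\D}(F)/S(V_\bu;F)$ and cancelling the leading terms yields $A_{F,\D_F}(w) \ge \delta\cdot S(W_\bu;w)$, i.e.\ $A_{F,\D_F}(w)/S(W_\bu;w) \ge \delta$. Taking the infimum over such $w$ and invoking density of $\IT$-invariant quasi-monomial valuations among those computing $\delta_{Z,\IT}(F,\D_F;W_\bu)$ gives $\delta_{Z,\IT}(F,\D_F;W_\bu) \ge \delta$.

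The chief technical point is the refinement identity for $S$ on quasi-monomial combinations $\ord_F + b\,\tilde w$ in the $\IT$-equivariant, multi-graded setting. The single-graded version is essentially \cite[\S 3]{AZ22}; here one must choose $\IT$-compatible bases of $V_m$ that are simultaneously adapted to the $F$-filtration and, on each graded piece $W_{m,t}$, adapted to $w$, so that the averages of $\ord_F$ and of $w$ decouple in the limit $m \to \infty$. Once this identity is in place the rest of the argument is a one-parameter comparison in $b>0$, so I expect the structure of the proof to be short and the main labor to be the careful bookkeeping of the refinement formula.
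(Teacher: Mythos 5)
The decisive step in your argument is the claimed identity
\begin{equation*}
S(V_\bu;\,\ord_F+b\,\tilde w)\;=\;S(V_\bu;F)+b\,S(W_\bu;w),
\end{equation*}
and this is where the proof breaks. What the compatible-basis computation actually yields is only the inequality $S(V_\bu;v_{1,b})\le S(V_\bu;F)+b\,S(W_\bu;w)$: choosing an $m$-basis type divisor $D$ compatible with both $\CF_{v_{1,b}}$ and $\CF_F$ and writing $\pi^*D=S_m(V_\bu;F)F+\Gamma$, one has $v_{1,b}(D)=S_m(V_\bu;F)+b\,\tilde w(\Gamma)$, and for the monomial extension one only gets $\tilde w(\Gamma)\le w(\Gamma|_F)\le S_m(W_\bu;w)$ (a section $s$ with $\ord_F(s)=j$ can satisfy $v_{1,b}(s)<j+b\,w(\bar s)$ whenever a monomial with positive $F$-weight beats the restricted leading term, e.g.\ $u+v^{N}$ with $N\gg 1/b$). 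Unfortunately your cancellation argument needs the opposite inequality: from $A(v_{1,b})/S(v_{1,b})\ge\delta$ and $A(v_{1,b})=A(F)+bA_{F,\D_F}(w)$ you can only conclude $\frac{A(F)+bA_F(w)}{S(F)+bS(W_\bu;w)}\ge\delta$ if $S(v_{1,b})\ge S(F)+bS(W_\bu;w)$. That lower bound amounts to producing, for a.e.\ point of the Okounkov body of the flag $(F,w)$, global sections whose $(u,v)$-expansion has no unexpected low $v_{1,b}$-weight terms; this is a genuine global statement that does not follow from choosing adapted bases, and proving it would be essentially as hard as the theorem itself. (A secondary, smaller issue: the reduction of $\delta_{Z,\IT}(F,\D_F;W_\bu)$ to $\IT$-invariant quasi-monomial $w$ by ``density'' is asserted but not justified for multi-graded linear series on a Fano-type klt pair.)

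For contrast, the paper sidesteps exact computation of $S$ on perturbed valuations entirely. It introduces the relative delta invariant $\delta_{Z,\IT}(X,\D,F;V_\bu)=\inf_v\frac{A_{X,\D}(v)-A_{X,\D}(F)v(F)}{S(V_\bu;v)-S(V_\bu;F)v(F)}$, proves via basis type divisors and inversion of adjunction along $F$ (Lemma \ref{Lemma: relative delta = min} and Theorem \ref{Theorem: relative delta = min}) that this infimum equals $\min\{A_{X,\D}(F)/S(V_\bu;F),\,\delta_{Z,\IT}(F,\D_F;W_\bu)\}$, and then deduces sharpness from the elementary equivalence $\frac{a_0}{b_0}\le\frac{a}{b}\Leftrightarrow\frac{a}{b}\le\frac{a-a_0}{b-b_0}$ applied to $a_0=A(F)v(F)$, $b_0=S(F)v(F)$. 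The point is that only the \emph{infimum} of the relative quotient needs to match the refinement data, and inversion of adjunction delivers exactly that without ever asserting $S(v_{1,b})=S(F)+bS(W_\bu;w)$ for an individual valuation. If you want to salvage your perturbation strategy, you would need to either prove the $S$-identity (at least asymptotically, for a sufficiently rich family of $w$) or replace $v_{1,b}$ by the valuation-free relative quotient as the paper does.
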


\begin{rmk} \rm
In general, if $\ord_F$ does not minimize $\delta_Z(X,\D;V_\bu)$, then we have
\begin{eqnarray*}
\frac{A_{X,\D}(F)}{S(V_\bu;F)} >
\delta_Z(X,\D;V_\bu) \ge
\mathop{\inf}_{Z'\mapsto Z} \delta_{Z',\IT}(F,\D_F;W_\bu). 
\end{eqnarray*}
But if $\ord_F$ minimizes $\delta_Z(X,\D;V_\bu)$, we do not know which term is larger on the right-hand side of (\ref{Inequality: intro. Abban-Zhuang estimate}). It seems possible that $A_{X,\D}(F)/S(V_\bu;F) > \mathop{\inf}_{Z'\mapsto Z} \delta_{Z',\IT}(F,\D_F;W_\bu)$. Hence we would fall short of the optimal bound if relying solely on this estimate. By (\ref{Inequality: intro. sharpness AZ}) we see that this scenario is precluded. 
\end{rmk}

Let's return to the case of Theorem \ref{Theorem: Intro. main thm}, but we do not need to assume that the subtorus $\IT=\IG_m^r\seq \Aut(X,\D)$ is maximal in the following. 
\begin{thm}[Theorem \ref{Theorem: main thm of almost complete}]
\label{Theorem: Intro. main thm. almost complete}
Assuming that $\Fut|_N=0$ and the multi-graded linear series $W^{X_r}_\bu$ is almost complete (see \cite[Definition 2.16]{AZ22} or (\ref{Eqnarray: almost complete assumption})) with respect to some big divisor $L$. Then there exists an effective $\IR$-divisor $F$ on $X_r$ such that $(X,\D)$ is K-polystable (K-semistable) if and only if $(X_r, \D_r+F)$ is K-stable or K-polystable (K-semistable). 
\end{thm}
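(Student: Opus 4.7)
My plan is to combine the iterated $\IT$-equivariant Abban-Zhuang refinement underlying Theorem \ref{Theorem: Intro. main thm} with an exchange, supplied by the almost completeness hypothesis, of the multi-graded linear series $W^{X_r}_\bu$ for a genuine boundary divisor $F$ on $X_r$. The skeleton is to reduce K-(poly/semi)stability of $(X,\D)$ to the threshold condition $\delta(X_r,\D_{X_r};W^{X_r}_\bu)>(\ge)1$, to show that almost completeness transfers this threshold to $\delta(X_r,\D_{X_r}+F)>(\ge)1$, and finally to conclude by \cite[Theorem~1.6]{LXZ22}.

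For the reduction, I would run the refinement construction from the proof of Theorem \ref{Theorem: Intro. main thm} verbatim: it is intrinsic to the fixed torus $\IT\seq\Aut(X,\D)$ and makes no use of maximality. The sharpness clause of Theorem \ref{Theorem: intro. sharpness AZ} applied at each of the $r$ refinement steps propagates any K-polystability deficit of $(X,\D)$ down to $W^{X_r}_\bu$, yielding the equivalence
\[
(X,\D)\text{ is K-polystable (K-semistable)}\iff \delta(X_r,\D_{X_r};W^{X_r}_\bu)>(\ge)1,
\]
where in the non-maximal case the right-hand side is really the reduced/equivariant delta invariant with respect to the residual automorphism torus of $(X_r,\D_{X_r})$ not already accounted for by $\IT$. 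This is what produces the ``K-stable or K-polystable'' dichotomy in the conclusion: $(X_r,\D_r+F)$ is K-stable precisely when the residual torus is trivial, and K-polystable otherwise.

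For the exchange step, by \cite[Definition 2.16]{AZ22} (condition \eqref{Eqnarray: almost complete assumption}) almost completeness of $W^{X_r}_\bu$ with respect to a big divisor $L$ means that its asymptotic invariants differ from those of $|mL|$ only by a fixed effective $\IR$-divisor $F$. A direct computation with the valuation filtration shows that for every divisorial valuation $v$ over $X_r$,
\[
S(W^{X_r}_\bu;v)\;=\;v(F)+S(L;v),
\]
while $A_{X_r,\D_{X_r}}(v)=A_{X_r,\D_{X_r}+F}(v)+v(F)$. Choosing the normalization $L\sim_\IR -(K_{X_r}+\D_{X_r}+F)$ and using the elementary fact that $(a+c)/(b+c)$ lies on the same side of $1$ as $a/b$ whenever $c\ge 0$, the valuation-wise ratios satisfy
\[
\frac{A_{X_r,\D_{X_r}}(v)}{S(W^{X_r}_\bu;v)}>(\ge)1 \iff \frac{A_{X_r,\D_{X_r}+F}(v)}{S(L;v)}>(\ge)1.
\]
Taking infima over $v$ gives the required equivalence of delta thresholds, and \cite[Theorem~1.6]{LXZ22} converts this to K-(poly/semi)stability of $(X_r,\D_r+F)$.

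The chief technical obstacle I anticipate is the exchange step: extracting $F$ cleanly from almost completeness, confirming that $(X_r,\D_r+F)$ is klt (equivalently, that $F$ has sufficiently small coefficients) so that \cite{LXZ22} applies, and verifying that the $S$-invariant identity above persists for \emph{all} divisorial valuations on $X_r$, not merely those arising from the toric refinement. Tracking the residual torus on $X_r$ so that the ``K-polystable'' on the left matches ``K-stable or K-polystable'' on the right is a further bookkeeping point that will need care.
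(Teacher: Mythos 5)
Your proposal follows essentially the same route as the paper. The exchange step is the paper's argument verbatim: one computes $S^g(W_\bu;v)=\lambda\, S(L;v)+v(F)$ for every valuation $v$ over $X_r$ directly from the decomposition (\ref{Eqnarray: almost complete assumption}), identifies $\lambda L=-(K_{X_r}+\D_r+F)$ by integrating $W_{(1,\alpha)}$ against $\DH^g_\BP$ and using $\Fut|_N=0$, and then compares $\frac{a-c}{b-c}$ with $\frac{a}{b}$ via the elementary inequality (\ref{Eqnarray: elementary inequality}) with $a=A_{X_r,\D_r}(v)$, $b=S^g(W_\bu;v)$, $c=v(F)$. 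This settles the K-semistability equivalence exactly as in the paper.

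The one substantive point you flag but do not resolve is the K-polystability clause and the ``K-stable or K-polystable'' dichotomy, and this is where the paper does something concrete that your sketch still needs. After reducing to the case where both pairs are K-semistable (so that $(X_r,\D_r+F)$ is log Fano), the paper either invokes Theorem \ref{Theorem: main thm} directly when $\IT$ is maximal, or else applies Construction \ref{Construction: toric divisor sequence} a \emph{second} time, now to $(X_r,\D_r+F)$, producing a further triple $(X_{r+s},\D_{r+s}';W_\bu')$ with no residual torus, and shows that the $g$-weighted K-polystability of $(X,\D)$ and the K-polystability of $(X_r,\D_r+F)$ are both equivalent to the single condition $\delta(X_{r+s},\D_{r+s}';W_\bu')>1$. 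Your alternative, reading the threshold as a reduced delta invariant with respect to ``the residual torus of $(X_r,\D_r)$,'' is the same idea in different clothing, but to make it rigorous you must still show that this residual torus together with $\IT$ accounts for a maximal torus of $\Aut(X,\D)$ (so that the criterion of \cite{XZ19,LXZ22} applies on the $X$ side), which is precisely what the iterated construction delivers. In short: identical approach for the main exchange, with the polystability bookkeeping left open in your version and closed by iteration in the paper's.
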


For example, let $X$ be a Fano $\IT$-variety of complexity one (maximal torus of $\Aut(X)$ is of rank $r=\dim X-1$). Then $X_r\cong \IP^1$ is of Picard number one. In particular, $W^{X_r}_\bu$ is almost complete with respect to $\CO_{\IP^1}(1)$. 

In K-polystability problems, the assumption $\Fut|_N=0$ is always needed but may not hold in general. This is the case when a log Fano pair is destabilized by some product test configurations. Hence it is K-unstable and we could say nothing more. However, it is still possible that the log Fano pair is K-semi/polystable in the $g$-weighted setting. There are corresponding canonical metrics called $g$-solitons (see \cite{HL23}), and good moduli theory (claimed by \cite{BLXZ23}). 

\begin{rmk}\rm
Let $\BP\in M_\IR$ be the moment polytope of a log Fano pair with a $\IT$-action. 
We say that a continuous function $g:\BP\to \IR_{>0}$ is a {\it weight function} if the modified Futaki invariant $\Fut_g|_N = 0$ (see \cite{TZ02} or (\ref{Eqnarray: Fut_g = 0})). 
It is more natural to consider K-polystability problems in the $g$-weighted setting. 
All the above theorems, that is, Theorem \ref{Theorem: Intro. main thm}, \ref{Theorem: intro. sharpness AZ} and \ref{Theorem: Intro. main thm. almost complete} can be generalized to the $g$-weighted setting with no difficulty. And we state the theorems in the non-weighted form for simplicity of notations. 
\end{rmk}

The $g$-weighted version of Theorem \ref{Theorem: Intro. main thm. almost complete} is very useful. As an application, we generalize some existence results of K\"ahler-Ricci solitons on Fano threefolds in \cite{MW23,MW24} to the existence of $g$-solitons for arbitrary continuous weight function $g:\BP\to \IR_{>0}$ by \cite{HL23,BLXZ23}. 

\begin{thm}
\label{Intro. Theorem. 2.28, 3.14, 2.23(a)0 are weight-insensitive}
Let $X$ be a Fano threefold in the family №2.28 or №3.14 of Mori-Mukai's list, or $X=X_0$ is the optimal degeneration of a Fano threefold in family №2.23(a) (which is denoted by family №2.23(a0)), then $X$ is $g$-weighted K-polystable for any weight function $g:\BP\to \IR_{>0}$. In particular, it admits a $g$-soliton if $\Ik=\IC$. 
\end{thm}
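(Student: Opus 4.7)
The plan is to apply the $g$-weighted version of Theorem \ref{Theorem: Intro. main thm. almost complete} to each of the three cases. Each of the threefolds in question is a $\IT$-variety of complexity one, meaning that a maximal torus $\IT\seq \Aut(X)$ has rank $r=2=\dim X-1$; this is read off from the Mori--Mukai tables and has already been used for the Mabuchi weight arising from K\"ahler--Ricci solitons in \cite{MW23,MW24,LW24}. Taking a basis $\{\xi_1,\xi_2\}$ of $N$ and performing Construction \ref{Construction: toric divisor sequence}, one obtains $(X_r,\D_r)$ with $X_r\cong\IP^1$, of Picard rank one. The linear series $W^{X_r}_\bu$ is thus automatically almost complete with respect to $\CO_{\IP^1}(1)$, and the $g$-weighted version of the theorem produces an effective $\IR$-divisor $F_g=\sum_i c_i(g)P_i$ on $\IP^1$, reducing the $g$-weighted K-polystability of $X$ to the K-stability or K-polystability of the log pair $(\IP^1,\D_r+F_g)$.

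The next step is to write out $\D_r$ and $F_g$ explicitly. In the unweighted setting this has been carried out in \cite{MW23,MW24,LW24} for each of the three families: one fixes coordinates so that $E_{\xi_1},E_{\xi_2}$ correspond to specific divisorial valuations, and tracks the adjunctions and refinements through Construction \ref{Construction: toric divisor sequence}. Passing to a general weight $g$ only changes the coefficients: the volumes and $S$-invariants entering $F_g$ are replaced by their $g$-weighted analogues, obtained by integrating $g$ against the appropriate slices of the moment polytope $\BP$, so each $c_i(g)$ becomes a ratio of such integrals. The normalization hypothesis $\Fut_g|_N=0$ required by the theorem is automatic here, since $g$ is by definition a weight function for the $\IT$-action.

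The final and main step is to verify that $(\IP^1,\D_r+F_g)$ is K-polystable for every positive continuous $g$ on $\BP$. The elementary valuative criterion on $\IP^1$ says that a log Fano pair $(\IP^1,\sum c_i P_i)$ with $\sum c_i<2$ and $c_i\in[0,1)$ is K-semistable iff $c_k\le\sum_{j\ne k}c_j$ for every $k$, and is K-polystable iff either (i) strict inequality holds for every $k$ (so the pair is K-stable), or (ii) exactly two of the $c_i$ are positive and they are equal. In each of the three families, the residual discrete group $\Aut(X,\D)/\IT$ acts on $X_r\cong\IP^1$ and permutes the points of $\supp(\D_r+F_g)$; this forces the symmetry on the $c_i(g)$ needed to land in case (i) or (ii), uniformly in $g$. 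The main obstacle is the bookkeeping in the second step --- one must verify case by case that the weighted integrals defining $c_i(g)$ remain strictly positive and that the symmetry group really equalizes them as claimed, with the singular variety $X_0$ for family №2.23(a) being the most delicate --- after which continuity and strict positivity of $g$ upgrade the unweighted strict inequalities of \cite{MW23,MW24,LW24} to strict inequalities for every such $g$. The existence of $g$-solitons over $\IC$ then follows from \cite{HL23,BLXZ23}.
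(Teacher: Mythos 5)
Your proposal rests on a false structural premise: you assert that each of these threefolds is a $\IT$-variety of complexity one, with a maximal torus of rank $r=2$, so that Construction \ref{Construction: toric divisor sequence} terminates in $X_r\cong\IP^1$. That is not the case. For №2.28 ($X=\Bl_C\IP^3$, $C$ a smooth plane cubic in a plane $H$), №3.14, and the degeneration $X_0=\Bl_CQ_0$ in №2.23(a), the maximal torus of the automorphism group is the single $\IG_m$ described in Example \ref{Example: 2.28 and 3.14} (the action connecting $H$ and the point $p$, resp.\ the cone direction); any automorphism must preserve the smooth cubic (resp.\ biconic) curve $C$, whose stabilizer in $\Aut(\IP^2)$ (resp.\ $\Aut(\IP^1\times\IP^1)$) is finite, so no second $\IG_m$ exists. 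Hence $r=1$, these are complexity-\emph{two} varieties, and the output of the construction is a surface: $X_1=\tH\cong\IP^2$ for №2.28 and №3.14, and $\tH\cong\IP^1\times\IP^1$ for №2.23(a). Your entire third step --- the elementary valuative criterion for log pairs on $\IP^1$ and the symmetry argument via $\Aut(X,\D)/\IT$ --- is therefore applied to the wrong space and cannot be salvaged; there is no residual finite group permuting points of $\IP^1$ here.

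The paper's actual route (Example \ref{Example: 2.28 and 3.14} and its analogue for №2.23(a)) refines $-K_X$ once by the toric divisor $\tH$, writes out $W_{(1,\alpha)}$ explicitly to verify the almost complete condition (\ref{Eqnarray: almost complete assumption}) directly --- not via a Picard-rank-one argument --- and applies the $g$-weighted Theorem \ref{Theorem: main thm of almost complete} to reduce the claim to K-polystability of $(\IP^2,\mu C)$ or $(\IP^1\times\IP^1,\mu C)$, where $\mu$ is a ratio of $g$-weighted integrals shown to satisfy $0<\mu<1$ using $\Fut_g|_N=0$. The decisive input is then Lemma \ref{Lemma: curve GIT and K-stability} (from the wall-crossing result of Ascher--DeVleming--Liu): for $0<c<1$ the pair $(\IP^2,cC)$, resp.\ $(\IP^1\times\IP^1,cC)$, is K-polystable iff $C$ is GIT-polystable, which holds for smooth $C$. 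This GIT comparison on surfaces is the ingredient your proposal is missing and cannot be replaced by any computation on $\IP^1$.
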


The proof follows from Example \ref{Example: 2.28 and 3.14}. As a consequence, we obtain examples of $g$-weighted moduli spaces, which are isomorphic to GIT-moduli spaces of cubic curves $C\seq \IP^2$ or biconic curves $C\seq \IP^1\times \IP^1$, see \cite[Theorem 1.3]{MW23} and \cite[Theorem 1.5]{MW24}. 

\begin{rmk}\rm
Since this paper appeared on the arXiv, Thibaut Delcroix \cite{Del24} found more examples of Fano threefolds having the above property, and said that they are {\it weight-insensitive}. Precisely, a log Fano pair $(X,\D)$ with a torus $\IT$-action is called $\IT$-equivariantly weight-insensitive K-polystable if, for any weight function $g$ on $\BP(\IT)$, $(X,\D)$ is $\IT$-equivariantly $g$-weighted K-polystable. With this terminology, Theorem \ref{Intro. Theorem. 2.28, 3.14, 2.23(a)0 are weight-insensitive} says that Fano threefolds in family №2.28, №3.14 and Fano varieties in family №2.23(a0) are weight-insensitive K-polystable. 

Moreover, these examples of Fano varieties have non-trivial moduli: for any weight function $g$, the $g$-weighted K-moduli space of Fano threefolds in family №2.28 (or №3.14) is isomorphic to the GIT-moduli space of plane cubic curves by Example \ref{Example: 2.28 and 3.14}; the $g$-weighted K-moduli space of Fano varieties in family №2.23(a0) is isomorphic to the GIT-moduli space of biconic curves in $\IP^1\times\IP^1$ by Example \ref{Example: 2.28 and 3.14}. 

The optimal degeneration theory of Fano varieties was generalized in \cite{Wang24} (see \cite[Definition 3.1]{Wang24}) by modifying the exponential function in the definition of $\BH$-invariants (see \cite[Section 2.5]{HL20} or \cite[Section 2.2.3]{BLXZ23}). By the weight insensitivity of Fano varieties in family №2.23(a0), for any Fano threefolds $X$ in family №2.23(a), its {\it generalized optimal degeneration} (see \cite[Theorem 1.1]{Wang24}) is always $X_0$ (the optimal degeneration of $X$). 
\end{rmk}

The paper is organized as follows. In Section \ref{Section: Preliminaries} we collect some basic results about Fano $\IT$-varieties. We prove the sharpness of Abban-Zhuang estimate in Section \ref{Section: The sharpness of Abban-Zhuang estimate}. In Section \ref{Section: A Valuative Criterion of (Weighted) K-polystability}, we establish the main theorem of the paper, that is, a valuative criterion of K-polystability. 
In Section \ref{Section: Applications: almost complete linear series}, we consider the problem under the almost complete condition and show the existence of $g$-solitons on some Fano varieties as applications. We will give a another construction of the triple $(X_r, \D_{X_r}; W^{X_r}_\bu)$ in Section \ref{Section: Qdlt Fano type models}.

\noindent {\bf Acknowledgments}. The author would like to express deep gratitude to his advisor, Gang Tian, for his constant support and guidance. The author want to thank Chenyang Xu, Yuchen Liu and Ziquan Zhuang for suggestions about Fano varieties with torus action. 
The author also thank Xin Fu, Jiyuan Han, Xiaowei Jiang, Minghao Miao, Kewei Zhang and Shengxuan Zhou for helpful discussions. The author was partially supported by the NKRD Program of China (\#2020YFA0713200), (\#2023YFA1010600) and LNMS.

\section{Preliminaries}
\label{Section: Preliminaries}

\subsection{Notations and Conventions}
We work over an algebraically closed field $\Ik$ of characteristic $0$. A {\it variety} is a separated integral scheme of finite type over $\Ik$. A {\it pair} $(X, \Delta)$ consists of a normal variety $X$ and an effective $\IQ$-divisor $\Delta$ on $X$ such that $K_X+\Delta$ is $\IQ$-Cartier, and it is a {\it subpair} if not assuming $\D$ is effective. 
A pair $(X, \Delta)$ is called {\it log Fano} if it is klt, $X$ is projective, and $-K_X-\Delta$ is ample. 
A polarized pair $(X,\D;L)$ consists of a projective pair $(X,\D)$ and a $\IQ$-Cartier ample divisor $L$ on $X$. Fix an integer $l_0 > 0$ such that $l_0 L$ is Cartier. We denote by $R:=R(X;L):=\oplus_{m\in l_0\IN} R_m$ the section ring of $L$ where $R_m := H^0(X, mL)$. If $(X,\D)$ is log Fano, we will always choose $L=-K_X-\D$. In this case $R(X,\D):=R(X;-K_X-\D)$ is the anti-canonical ring of $(X,\D)$. 

Let $G$ be an algebraic group. For any $G$-action $\eta: G\times X\to X$, we denote by $X^G\seq X$ the subscheme of $G$-invariant points. A subvariety $Y\seq X$ is called {\it $G$-invariant} if $\eta^{-1}(Y)=G\times Y$. The action $\eta$ is called {\it effective} if it is not induced from other group action, that is, if there is a morphism of algebraic groups $\phi: G\to G'$ such that $\eta=\eta'\circ(\phi\times \id)$ for some $G'$-action $\eta': G'\times X\to X$, then $\phi$ is an isomorphism. 

A {\it valuation} $v$ on $X$ is an $\IR$-valuation on the rational function field $K(X)$ with a center $c_X(v)$ on $X$ and $v|_{\Ik^\times}=0$. We denote by $C_X(v)=\overline{c_X(v)}\subseteq X$ the corresponding closed irreducible subscheme on $X$. Let $\D$ be an effective $\IQ$-divisor on $X$ such that $K_X+\D$ is $\IQ$-Cartier. The {\it log discrepancy} $A_{X,\D}(v)$ of a valuation $v$ on $X$ is defined in \cite{jm12}. We denote by $\Val_X$ the set of valuations on $X$ and by $\Val_{X}^\circ$ the subset of non-trivial valuations $v\in\Val_{X}$ with $A_{X,\D}(v)<+\infty$ of $v$ for some $\IQ$-divisor $\D$ such that $K_X+\D$ is $\IQ$-Cartier (it does not depend on the choice of $\D$). . For any projective morphism $X\to U$ between normal varieties and irreducible subvariety $Z\seq U$, we denote by $\Val_{X,Z}\seq \Val_X$ be the subset of valuations $v$ with $C_X(v)\supseteq Z$. If $X$ admits a torus $\IT=\IG_m^r$-action, we denote by $\Val_X^{\IT}\seq \Val_X$ the subset of $\IT$-invariant valuations on $X$. 

We say that a variety $E$ is a prime divisor {\it over} $X$ if there exists a projective birational morphism $\pi:Y\to X$ such that $E\seq Y$ is a prime divisor on $Y$. If we assume moreover that $Y$ is normal and $-E$ is ample $\IQ$-Cartier, then $E$ is called {\it primitive} over $X$, and $\pi$ is called the associated prime blowup. For any $\IQ$-divisor $D, D'$ on $X$, we denote by $D\vee D'$ the smallest $\IQ$-divisor $D''$ such that $D''\ge D$ and $D'$.
Let $(X,\D)$ be a pair and $E$ be a primitive divisor over $X$. 
Then $E$ is said to be of {\it plt-type} over $(X,\D)$ if $(Y,E\vee\pi_*^{-1}\D)$ is plt, where $\pi: Y\to X$ is the prime blowup of $E$.

\subsection{Special test configurations and special divisorial valuations}
\begin{defi} \rm
Let $(X,\D;L)$ be a polarized pair. A {\it test configuration (TC)} of $(X,\D,L)$ is a collection $(\CX, \D_\CX,\CL,\eta)$ consisting of
\begin{itemize}
\item A variety $\CX$ with a $\IG_m$-action generated by a holomorphic vector field $\eta\in \Hom(\IG_m, \Aut(\CX))$; 
\item A $\IG_m$-equivariant morphism $\pi: \CX\to \IA^1$, where the $\IG_m$-action on $\IA^1$ is standard; 
\item A $\IG_m$-equivariant $\pi$-semiample $\IQ$-Cartier divisor $\CL$ on $\CX$; 
\item A $\IG_m$-equivariant trivialization over the punctured plane 
$$i_\eta:(\CX,\CL)|_{\pi^{-1}((\IA^1\setminus\{0\}))}\cong (X,L)\times (\IA^1\setminus\{0\}),$$ 
which is compatible with $\pi$ and $\pr_1$. And $\D_\CX$ is the closure of $i_\eta^{-1}(\D\times(\IA^1\setminus\{0\}))$ in $\CX$.  
\end{itemize}
\end{defi}

If $\CX$ is a normal variety, then $(\CX,\D_\CX,\CL,\eta)$ is called a {\it normal test configuration}. 
In the log Fano case, we always choose $L=-K_X-\D$, and a normal test configuration $(\CX,\D_\CX,\CL,\eta)$ is called {\it weakly special} (resp. {\it special}) if $(\CX,\CX_0+\D_\CX)$ is lc (resp. plt), and $\CL=-K_{\CX/\IA^1}-\D_\CX + c\CX_0$ for some $c\in\IQ$. Note by adjunction that $(\CX, \D_\CX,\CL)$ being special is equivalent that the central fiber $(\CX_0, \D_{\CX,0})$ is log Fano. 
A normal test configuration $(\CX,\D_\CX)$ is called of {\it product type} if $(\CX,\D_\CX)$ is $\IG_m$-equivariantly isomorphic to $(X,\D)\times \IA^1$. 

Since $(\CX,\CL)|_{\CX\setminus \CX_0}$ is trivial over the punctured plane, we naturally have a compatification $\overline{\pi}:(\bCX, \bCL) \to \IP^1$ of $\pi: (\CX,\CL) \to \IA^1$ by adding a trivial fiber $X_\infty \cong X$ over $\infty \in \IP^1$.

\begin{defi}[K-stability] \rm 
Let $(X,\D)$ be a log Fano pair. For any normal test configuration $(\CX,\D_\CX,\CL)$ of $(X,\D)$, the {\it generalized Futaki invariant} is defined by 
\begin{eqnarray*}
\Fut(\CX,\D_\CX;\CL)
:=
\frac{1}{(n+1)(-K_X-\D)^{n}}
\Big(
n \overline{\CL}^{n+1} 
+ (n+1)(K_{\overline{\CX}/\IP^1} 
+ \D_{\overline{\CX}})\cdot \overline{\CL}^n
\Big). 
\end{eqnarray*}
The log Fano pair $(X, \D)$ is called {\it K-stable (K-semistable)} if $\Fut(\CX, \D_\CX,\CL)> (\ge)\, 0$ for any normal test configuration $(\CX, \D_\CX, \CL)$ of it; it is called {\it K-polystable} if it is K-semistable and any normal test configuration $(\CX, \D_\CX, \CL)$ with $\Fut(\CX,\D_\CX;\CL)=0$ is of product type. 
\end{defi}

By \cite{LX14}, we could replace ``normal test configuration'' with ``special test configuration'' in the definition of K-(semi/poly)stability of log Fano pairs. 

\begin{rmk}\rm
\label{Remark: K-stable implies no G_m action}
A log Fano pair $(X,\D)$ is K-stable implying that it admits no $\IG_m$-action. Indeed, let $\rho:\IG_m\to X$ be a one-parameter subgroup. Then the sum of $\Fut$ of the product test configurations induced by $\rho$ and $\rho^{-1}$ is $0$, see (\ref{Eqnarray: Fut on N}) for details. 
\end{rmk}

\begin{defi} \rm 
A prime divisor $E$ over $(X,\D)$ is called {\it (weakly) special} if there exists a special test configuration (weakly special test configuration with integral central fiber) $(\CX,\D_\CX)$ such that $\ord_{\CX_0}|_{\CX_1} = c\cdot \ord_E$ for some $c\in \IQ_{>0}$.  
\end{defi}

We have the following characterization of (weakly) special divisors. 

\begin{thm} \cite[Theorem 4.24]{Xu24}
\label{Theorem: weakly special divisor definition}
A prime divisor $E$ over $(X,\D)$ is weakly special if and only if there exists a $\IQ$-complement $D$ of $(X,\D)$ such that $E$ is an lc place of $(X,\D+D)$. 
\end{thm}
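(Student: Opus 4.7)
The plan is to prove the two implications separately, combining complement theory with the degeneration machinery of lc MMP.

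For the forward direction, I would begin with a weakly special test configuration $(\CX,\D_\CX,\CL)$ with integral central fiber satisfying $\ord_{\CX_0}|_{\CX_1} = c\cdot \ord_E$. By the defining conditions, $(\CX_0,\D_{\CX,0})$ is an lc log Fano pair, so Birkar's theorem on the existence of bounded complements provides a $\IQ$-complement $D_0$ of $(\CX_0,\D_{\CX,0})$. The key step is to lift $D_0$ to a $\IG_m$-invariant $\IQ$-complement $\CD \sim_\IQ -K_{\CX/\IA^1}-\D_\CX$, which is possible because $\CL$ is $\pi$-semiample and the family is flat; one then restricts $\CD$ to the general fiber to obtain a $\IQ$-complement $D$ of $(X,\D)$. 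Since $(\CX,\D_\CX+\CX_0+\CD)$ is lc with $\CX_0$ an lc place, adjunction to $\CX_1\cong X$ together with $\ord_{\CX_0}|_{\CX_1}=c\cdot\ord_E$ shows that $E$ is an lc place of $(X,\D+D)$.

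For the backward direction, given $D$ such that $E$ is an lc place of $(X,\D+D)$, I would produce the desired weakly special test configuration by a Rees-type degeneration. Let $R_\bu=R(X,\D)$ be the anti-canonical ring, and consider the $\ord_E$-filtration $\CF^m R_k = \{s\in R_k : \ord_E(s)\ge m\}$. Forming the corresponding Rees algebra and taking the relative Proj over $\IA^1$ produces a flat family $\CX\to\IA^1$ with generic fiber $(X,\D)$; by construction, $\ord_{\CX_0}|_{\CX_1}$ is a positive rational multiple of $\ord_E$. Normalizing $\CX$ and replacing $\ord_E$ by a suitable primitive multiple so that the associated graded $\gr_\CF R_\bu$ is a domain ensures integrality of the central fiber, and the prescribed form of $\CL$ is then obtained as the natural polarization on the Proj.

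The hard part, and the step I would spend the most effort on, is verifying that $(\CX,\CX_0+\D_\CX)$ is lc, which is exactly what makes the test configuration weakly special. My strategy is first to pass to a dlt modification $Y\to X$ extracting $E$ with $(Y, E+\pi_*^{-1}(\D+D))$ lc and $E$ appearing with coefficient one, then propagate $D$ to a $\IG_m$-equivariant $\IQ$-complement $\CD$ on $\CX$ and invoke inversion of adjunction: the restriction of $(\CX,\CX_0+\D_\CX+\CD)$ to $\CX_0$ becomes the induced lc complement of the degeneration, which forces lc singularities of the total space in a neighbourhood of $\CX_0$, and triviality over $\IG_m\subset\IA^1$ extends this to all of $\CX$. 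Executing this cleanly requires combining existence of $\IQ$-complements, dlt modifications, and lc MMP with scaling, which together form the core of Kollár--Xu's theory of lc degenerations, and this is the technical heart of the argument.
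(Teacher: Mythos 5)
The paper does not prove this statement; it is quoted verbatim from \cite[Theorem 4.24]{Xu24}, so the only comparison available is with the standard argument in the literature (Xu's book, going back to Li--Xu and Blum--Liu--Xu).

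Your forward direction is essentially that standard argument and is fine modulo routine details: one produces a $\IG_m$-invariant $\IQ$-complement $\CD$ of the total space (either by lifting one from the lc log Fano central fiber, which needs a vanishing theorem for the surjectivity of restriction, or more directly by taking a general member of the $\pi$-semiample $\IQ$-linear system $|-K_{\CX/\IA^1}-\D_\CX|_{\IQ}$), checks that $(\CX,\D_\CX+\CX_0+\CD)$ is lc via inversion of adjunction near $\CX_0$ and the $\IG_m$-action away from it, and transfers the lc place condition from $\CX_0$ to $E$ through the identity $\ord_{\CX_0}|_{\CX_1}=c\cdot\ord_E$.

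The backward direction has a genuine gap. You write ``forming the corresponding Rees algebra and taking the relative Proj over $\IA^1$ produces a flat family,'' but for an arbitrary divisor $E$ over $X$ the bigraded algebra $\bigoplus_{k}\bigoplus_{m}\CF^{m}R_k$ is \emph{not} finitely generated, so there is no projective family to normalize, and no amount of rescaling $\ord_E$ fixes this. Finite generation is precisely where the hypothesis that $E$ is an lc place of the $\IQ$-complement $(X,\D+D)$ must enter: since $K_X+\D+D\sim_\IQ 0$ and $A_{X,\D+D}(E)=0$, one can extract $E$ by a dlt-type modification $Y\to X$ and run an MMP (via \cite{BCHM}-type finite generation for the lc pair with trivial log canonical class) to conclude that the filtered algebra is finitely generated; the same MMP argument, not a formal rescaling, is what yields integrality of the central fiber (equivalently, that $\gr_\CF R_\bu$ is a domain). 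You do invoke complements, dlt modifications, and lc MMP at the end, but only to verify that $(\CX,\CX_0+\D_\CX)$ is lc, treating the existence of $\CX$ as already settled; in the actual proof the construction of $\CX$ and the verification of its singularities are produced by one and the same MMP, and omitting the finite generation step means the test configuration has not been constructed at all. Once the family is built from the model $Y$ extracting $E$, the lc-ness of $(\CX,\CX_0+\D_\CX)$ does follow by the inversion-of-adjunction argument you sketch.
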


\begin{thm}\cite[Theorem 4.27]{Xu24}
\label{Theorem: special divisor definition}
A prime divisor $E$ over $(X,\D)$ is special if and only if there exists a plt-Fano type model $\pi: (Y, E)\to (X,\D)$, that is, there exists a birational morphism $\pi:Y\to X$ and an effective $\IQ$-divisor $D$ on $Y$ such that $(Y,D+E)$ is plt, $D+E\ge \pi_*^{-1}\D$, and $-(K_Y+D+E)$ is ample. 
\end{thm}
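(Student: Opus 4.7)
The plan is to prove both directions by working with the graded algebra associated to $v:=\ord_E$, via the standard dictionary between finitely generated filtrations on the anti-canonical ring $R=R(X,-K_X-\D)$ and test configurations, together with adjunction from $(Y,D+E)$ to $(E,\Diff_E(D))$.

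For the direction ($\Leftarrow$), starting from a plt-Fano type model $\pi:Y\to X$ with $(Y,D+E)$ plt, $D+E\ge \pi_*^{-1}\D$, and $-(K_Y+D+E)$ ample, I would form the Rees-type algebra
\[
\CR := \bigoplus_{m\in l_0\IN}\bigoplus_{k\ge 0}\CF^k R_m\cdot s^{-k}\ \seq\ R[s,s^{-1}], \qquad \CF^k R_m := \{f\in R_m : v(f)\ge k\},
\]
extended to all $k\in\IZ$ by $s$-multiplication. The first key step is finite generation of $\CR$ over $\Ik[s]$; this follows from BCHM applied to the plt-Fano type model, which yields finitely generated section rings both for $-(K_Y+D+E)$ and for the restricted log Fano pair $-(K_E+\Diff_E(D))$, so that the Rees algebra degenerates the former to the latter. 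Taking $\CX:=\Proj_{\IA^1}\CR$ produces a test configuration whose central fiber $\CX_0=\Proj\gr_\CF R$ is, via adjunction applied to $(Y,D+E)$ along $E$, identified with the log Fano pair $(E,\Diff_E(D))$ polarized by $-(K_E+\Diff_E(D))$. By construction $\ord_{\CX_0}|_{\CX_1}=c\cdot v$, so the TC is special.

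For the direction ($\Rightarrow$), given a special TC $(\CX,\D_\CX,\CL)$ with $\ord_{\CX_0}|_{\CX_1}=c\cdot v$, I would extract $E$ on $X$ as follows. Using the plt hypothesis on $(\CX,\CX_0+\D_\CX)$ to see that $\CX_0$ is the unique non-klt place, a $\IG_m$-equivariant MMP with scaling over $X\times\IA^1$ produces a $\IG_m$-equivariant birational model $\CY\to X\times\IA^1$ on which $E$ is extracted with $-E$ relatively ample; restricting to a general fiber yields $\pi:Y\to X$ with $E$ the unique exceptional divisor and $(Y,E+\pi_*^{-1}\D)$ plt near $E$. To produce the required $D$ and upgrade $\pi$-ampleness to global ampleness, I would exploit that the central-fiber polarization $-K_{\CX_0}-\D_{\CX,0}$ is ample: lifting a general $\IG_m$-invariant section from $\CX_0$ to $\CX$, then specializing to $\CX_1\cong X$ and taking strict transform on $Y$, produces an effective $\IQ$-divisor $D$ with $D+E\ge \pi_*^{-1}\D$, $(Y,D+E)$ plt, and $-(K_Y+D+E)$ ample. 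This is a Fano-type complement construction where BCHM guarantees termination of the relevant MMP to a Fano model.

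The principal obstacle in both directions is finite generation together with the precise matching of central fibers: the ($\Leftarrow$) direction requires that $\CR$ be finitely generated with graded pieces organizing into a log Fano pair, while the ($\Rightarrow$) direction requires running an MMP that terminates with a globally ample $-(K_Y+D+E)$. Both steps hinge on BCHM combined with the plt-Fano type hypothesis, and in both directions the identification of the central fiber with $(E,\Diff_E(D))$ relies on adjunction for plt pairs; this is precisely the technical core of \cite[Theorem 4.27]{Xu24}.
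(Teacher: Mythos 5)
The paper does not prove this statement at all: it is quoted as \cite[Theorem 4.27]{Xu24}, so there is no in-paper argument to compare against and your proposal has to stand on its own. It does not, because of a concrete error at the heart of the ($\Leftarrow$) direction. The central fiber of the test configuration $\Proj_{\IA^1}\CR$ attached to the filtration $\CF_{\ord_E}$ is $\CX_0=\Proj\,\gr_\CF R$, an $n$-dimensional degeneration of $X$ --- roughly, the orbifold cone over $(E,\Diff_E(D))$ coming from the extra $k$-grading --- and not the $(n-1)$-dimensional pair $(E,\Diff_E(D))$ itself. Your identification of $\CX_0$ with $(E,\Diff_E(D))$ is dimensionally impossible, and the step you label ``adjunction along $E$'' is exactly the missing content: one must show that plt-ness of $(Y,D+E)$ together with ampleness of $-(K_Y+D+E)$ forces the cone $\Proj\,\gr_\CF R$ to be normal with plt total space $(\CX,\CX_0+\D_\CX)$, which is a Fano-cone/Koll\'ar-type inversion-of-adjunction statement requiring its own proof. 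Finite generation of the bigraded ring $\bigoplus_{m,k}\CF^kR_m$ does hold because $Y\to X$ is a Fano type morphism (hence a relative Mori dream space by BCHM), but not by the mechanism you describe of ``degenerating one section ring to another''.

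In the ($\Rightarrow$) direction the skeleton (equivariant MMP over $X\times\IA^1$ to extract $E$ with $-E$ relatively ample, then lifting a general invariant section from the central fiber) is the right one, but the two hard points are asserted rather than proved. Lifting a general member of $|-m(K_{\CX_0}+\D_{\CX,0})|$ and specializing to $\CX_1\cong X$ yields a $\IQ$-complement of $(X,\D)$ with $E$ an lc place; that is the \emph{weakly} special characterization (Theorem \ref{Theorem: weakly special divisor definition}). To reach the special statement you must arrange that $(Y,D+E)$ is plt globally (not merely lc, and not merely near $E$) and that $-(K_Y+D+E)$ is ample on all of $Y$; this global positivity on a birational modification of $X$ does not follow from ampleness of $-K_{\CX_0}-\D_{\CX,0}$ by ``taking strict transforms'', and it is precisely where the argument in \cite{Xu24} has to work (choosing the complement generically so the pair stays plt, and running an MMP to the ample model while keeping $E$ as the unique exceptional divisor). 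As written, your proposal records the correct strategy but leaves both genuinely nontrivial steps unproved, and in the backward direction rests on a false identification of the central fiber.
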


\subsection{Valuative criterion of K-stability}

\begin{defi}\rm 
\label{Definition: filtrations}
Let $(X,\D;L)$ be a polarized pair. 
A {\it filtration} $\CF$ on $R=R(X,\D;L)$ is a collection of subspaces $\CF^\lam R_m \subseteq R_m$ for each $\lam \in \IR$ and $m\ge 0$ such that
\begin{itemize}
\item {\it Decreasing.} $\CF^\lam R_m \supseteq \CF^{\lam'}R_m $ for  $\lam \le \lam'$; 
\item {\it Left-continuous.} $\CF^\lam R_m=\CF^{\lam-\epsilon}R_m$ for $0<\epsilon \ll 1$; 
\item {\it Bounded.} $\CF^\lam R_m = R_m$ for $\lam \ll 0$ and $\CF^\lam R_m = 0$ for $\lam \gg 0$; 
\item {\it Multiplicative.} $\CF^\lam R_m \cdot \CF^{\lam'}R_{m'} \subseteq \CF^{\lam+\lam'}R_{m+m'}$. 
\end{itemize}
Since $R$ is finitely generated and $\CF$ is bounded and multiplicable, there is a constant $C>0$ such that $\CF^{-mC}R_m=R_m$ for all $m$. A filtration $\CF$ is called {\it linearly bounded} if there is a constant $C>0$ such that $\CF^{mC}R_m=0$ for all $m$. We will always assume that filtration is linearly bounded in this paper. 
\end{defi}

\begin{rmk} \rm
\label{Remark: valuation and TC to filtration}
For any valuation $v$ on $X$, there is a filtration $\CF_v$ on $R$ defined by
$$\CF_v^\lam R_m := \{s\in R_m\mid v(s)\ge \lam\}. $$
If $A_X(v)<+\infty$, then $\CF_v$ is linearly bounded, see \cite{BJ20}. In particular, the trivial valuation induces the trivial filtration
$\CF_{\triv}^0 R_m = R_m,\,\,\CF_{\triv}^{>0}R_m= 0. $

For any test configuration $(\CX,\D_\CX,\CL)$ of $(X,\D;L)$, we have the following
$\IZ$-filtration $\CF=\CF_{(X,\D_\CX;\CL)}$ on $R=R(X;L)$
\begin{eqnarray*}
\CF^\lam R_m
&:=&\{f\in H^0(X,mL)\mid t^{-\lam} \bar{f} \in H^0(\CX,m\CL)\}, 
\end{eqnarray*}
where $t$ is the parameter on $\IA^1$, and $\bar{f}$ is the $\IG_m$-extension of $f$ on $\CX\setminus\CX_0$ and viewed as a rational section of $m\CL$. 
\end{rmk}

\begin{defi}\rm
Let $\CF$ be a linearly bounded filtration on $R$ and $m\in l_0\IN$. For any $s\in R_m$, we set $\ord_\CF(s)=\max\{\lam: s\in\CF^{\lam}R_m\}$. For any basis $\{s_i\}$ of $R_m$, the divisor 
$$D=\frac{1}{m\cdot \dim R_m} \sum_i \{s_i=0\}$$
is called an {\it $m$-basis type divisor} of $R_\bu$. 
A basis $\{s_i\}$ (or the correspondence $m$-basis type divisor $D$) of $R_m$ is called {\it compatible} with $\CF$ if $\CF^\lam R_m$ is generated by $\{s_i: \ord_\CF(s_i)\ge \lam \}$ for any $\lam\in \IR$. It's not difficult to see that $\ord_\CF(D)$ achieves the maximum for any $m$-basis type divisor $D$ if and only if $D=D_c$ is compatible with $\CF$. We define 
\begin{eqnarray*} 
\lam^{(m)}_\max(\CF) 
&:=& \max\{\lam\in \IR\mid \CF^{\lam}R_m \ne 0\}, \\
S_m(\CF) 
&:=&
\mathop{\sup}_{D:\, m\text{-basis type}} v(D)  
\,\,\,=\,\,\,
v(D_c)
\,\,\,=\,\,\,
\sum_{\lam}\frac{\lam}{m} \cdot 
\frac{\dim \gr_\CF^{\lam} R_m}{\dim R_m}, \qquad
\end{eqnarray*} 
where $D_c$ is compatible with $\CF$. 
By \cite{BJ20}, the limits exist as $m\to \infty$ and we define
\begin{eqnarray*} 
\lam_\max(\CF) 
&:=& \mathop{\sup}_{m\in \IN} \frac{\lam^{(m)}_\max}{m} 
= \mathop{\lim}_{m\rightarrow \infty} \frac{\lam^{(m)}_\max}{m}, \\
S(\CF) &:=& \mathop{\lim}_{m\to \infty} S_m(\CF). 
\end{eqnarray*}
The invariant $S(\CF)$ is called the {\it expected vanishing order} of $\CF$. We always denote by $S(v)=S(\CF_v)$ and $S(E)=S(\CF_{\ord_E})$ for any valuation $v$ and prime divisor $E$ over $X$. For any non-trivial valuation $v$ on $X$, the {\it Fujita-Li invariant} is defined by 
\begin{eqnarray*} 
\FL(v) := A_{X,\D}(v) - S(v). 
\end{eqnarray*}
\end{defi}

\begin{thm}[Fujita-Li's valuative criterion]
A log Fano pair $(X,\D)$ is K-stable if and only if 
$\FL(v)>0$ for any valuation $v$ over $X$. 
\end{thm}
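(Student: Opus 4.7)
The plan is to set up a correspondence between nontrivial divisorial valuations on $X$ and nontrivial special test configurations under which the generalized Futaki invariant equals the Fujita-Li invariant, and then to extend from divisorial to arbitrary valuations by approximation.

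For the ($\Leftarrow$) direction, by the LX reduction \cite{LX14} already invoked above, it suffices to verify $\Fut(\CX, \D_\CX, \CL) > 0$ for every nontrivial special test configuration. Given such a configuration, the central fiber $\CX_0$ is a prime divisor on the plt pair $(\CX, \CX_0 + \D_\CX)$, and the restriction $v_\CX := \ord_{\CX_0}|_{K(X)}$ obtained via the $\IG_m$-equivariant trivialization is a nontrivial divisorial valuation on $X$ with $A_{X, \D}(v_\CX) < \infty$ by plt adjunction. Under the identification of $\CF_{(X, \D_\CX; \CL)}$ with a rescaling of $\CF_{v_\CX}$ from Remark~\ref{Remark: valuation and TC to filtration}, and using $\CL = -K_{\CX/\IA^1} - \D_\CX + c\CX_0$, a direct intersection-theoretic computation on the compactification $\overline{\CX}$ (expanding $\overline{\CL}^{n+1}$ and $(K_{\overline{\CX}/\IP^1} + \D_{\overline{\CX}}) \cdot \overline{\CL}^n$ in terms of the graded pieces of $\CF_{v_\CX}$) yields the Fujita-Li identity $\Fut(\CX, \D_\CX, \CL) = A_{X, \D}(v_\CX) - S(v_\CX) = \FL(v_\CX)$, which is positive by hypothesis.

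For the ($\Rightarrow$) direction, assume $(X, \D)$ is K-stable and let $v$ be a nontrivial valuation with $A_{X, \D}(v) < \infty$. When $v = c \cdot \ord_E$ is divisorial and $E$ is special in the sense of Theorem~\ref{Theorem: special divisor definition}, the plt-Fano type model can be degenerated via deformation to the normal cone (or via the MMP construction of \cite{LX14}) to a special test configuration realizing a positive rational multiple of $v$; combining the identity above with K-stability then yields $\FL(v) > 0$. When $E$ is only weakly special, I would first invoke Theorem~\ref{Theorem: weakly special divisor definition} to build a weakly special test configuration from a $\IQ$-complement of $(X, \D)$ through $E$, and then run the MMP as in \cite{LX14} to descend to a special test configuration whose associated valuation remains proportional to $v$, reducing to the special case. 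Finally, for an arbitrary valuation $v$, one approximates the filtration $\CF_v$ by filtrations coming from divisorial valuations, using finite generation of the anti-canonical ring, and passes to the limit via continuity of $S(\cdot)$ from \cite{BJ20}.

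The main obstacle is precisely this last step: the log discrepancy $A_{X, \D}(\cdot)$ is only lower semicontinuous along such approximations, so strict positivity of $\FL$ along a sequence of divisorial valuations does not automatically propagate to the limit. I would circumvent this by reformulating the criterion in terms of the delta invariant $\delta(X, \D) = \inf_v A_{X, \D}(v)/S(v)$ and invoking the equivalence of K-stability with uniform K-stability from \cite{LXZ22}, which reduces the strict inequality for arbitrary $v$ to the divisorial case, since the infimum defining $\delta$ is then a minimum achieved by a divisorial valuation.
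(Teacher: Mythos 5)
The paper does not actually prove this theorem: it is quoted as a standard result of Fujita and Li, and the paper immediately follows it with the equivalent $\delta$-criterion attributed to \cite{BJ20,LXZ22}, which is the form actually used later. So there is no in-paper argument to compare against; I can only assess your proposal on its own terms.

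Your sketch is essentially correct and follows the standard route. The ($\Leftarrow$) direction via \cite{LX14} and the identity $\Fut(\CX,\D_\CX,\CL)=c\cdot\FL(\ord_{\CX_0}|_{K(X)})$ for special test configurations is exactly right (this is the computation the paper itself cites as \cite[Claim 5.4]{Fuj16}, \cite[Lemma 6.6]{Li17}, \cite[Lemma 5.17]{BHJ17}). Your final paragraph also correctly identifies the real difficulty in ($\Rightarrow$) --- the failure of the naive approximation argument because $A_{X,\D}$ is only semicontinuous in the wrong direction --- and the correct fix: $\FL(v)>0$ for all nontrivial $v$ is equivalent to $\delta(X,\D)>1$ precisely because, when $\delta\le 1$, the infimum is attained by a quasi-monomial minimizer by \cite{LXZ22}; combined with K-stable $\Leftrightarrow$ uniformly K-stable $\Leftrightarrow$ $\delta>1$ from \cite{BJ20,LXZ22}, this closes the argument. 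Two caveats. First, once you invoke that reduction, the entire intermediate discussion of special and weakly special divisors is redundant; moreover, the specific claim that running the MMP of \cite{LX14} on a weakly special test configuration produces a special one ``whose associated valuation remains proportional to $v$'' is not accurate --- the MMP generally changes the central fiber and hence the induced valuation (the correct statement for weakly special $E$ is that the lc weakly special test configuration itself already computes $\Fut=c\cdot\FL(E)$, with no MMP needed). Second, be aware that deriving Fujita--Li from the $\delta$-criterion of \cite{BJ20} is mildly circular as a from-scratch proof, since \cite{BJ20} is itself built on the valuative criterion for uniform K-stability; as a derivation from the results quoted in this paper, however, it is perfectly legitimate.
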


\begin{defi} \rm
\label{Eqnarray: definition of delta}
The {\it delta invariant} of a log Fano pair $(X, \D)$ is defined by 
\begin{eqnarray}
\label{Eqnarray: definition of delta}
\delta(X,\D) \,\,\,=\,\,\, 
\mathop{\inf}_{v\in \Val^\circ_X} \frac{A_{X,\D}(v)}{S(v)}, 
\end{eqnarray}
where $\Val_X^\circ \seq \Val_X$ is the subset of non-trivial valuations satisfying $A_{X,\D}(v)<+\infty$. 
\end{defi}

\begin{thm}\cite{BJ20,LXZ22}
A log Fano pair $(X,\D)$ is K-stable if and only if 
$\delta(X,\D)>1$. 
\end{thm}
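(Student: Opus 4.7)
The plan is to reduce the equivalence to the Fujita-Li valuative criterion stated just above, via the identity $\FL(v) = A_{X,\D}(v) - S(v)$. The forward implication is essentially formal. If $\delta(X,\D) > 1$, then for every $v \in \Val_X^\circ$ with finite log discrepancy the defining infimum yields $A_{X,\D}(v) \geq \delta(X,\D) \cdot S(v) > S(v)$, using $S(v) > 0$ for non-trivial $v$; while $A_{X,\D}(v) = +\infty$ gives $\FL(v) = +\infty$ trivially. Hence $\FL(v) > 0$ for every $v \in \Val_X^\circ$, and Fujita-Li's criterion gives K-stability (in fact uniform K-stability with slope $\delta(X,\D)$).

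For the converse I will argue contrapositively, assuming $\delta(X,\D) \leq 1$ and producing a destabilizer. If $\delta(X,\D) < 1$, then after reducing the defining infimum to prime divisors over $X$ (as in \cite{BJ20}) some divisorial $\ord_E$ satisfies $A_{X,\D}(E)/S(E) < 1$, hence $\FL(E) < 0$, and the Fujita-Li criterion is already violated.

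The genuinely hard case is $\delta(X,\D) = 1$, which is the content of \cite{LXZ22}. Here the plan is to actually \emph{achieve} the infimum by a valuation that converts into a non-trivial special test configuration. Concretely, I will invoke Liu-Xu-Zhuang's finite generation theorem: whenever $\delta(X,\D) \leq 1$ there exists a quasi-monomial valuation $v$ computing $\delta(X,\D)$ whose associated graded algebra $\gr_v R(X,\D)$ is finitely generated. Finite generation allows $v$ to be degenerated to a divisorial lc place $\ord_E$ of a $\IQ$-complement of $(X,\D)$ still computing $\delta$. By Theorem \ref{Theorem: weakly special divisor definition}, such an $E$ is weakly special, and the Rees-type construction of Remark \ref{Remark: valuation and TC to filtration} produces a non-trivial (weakly) special test configuration $(\CX,\D_\CX,\CL)$ whose generalized Futaki invariant is a positive multiple of $\FL(E) = A_{X,\D}(E) - S(E) \leq 0$. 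This contradicts K-stability.

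The principal obstacle is the finite generation step of \cite{LXZ22}: producing a $\delta$-minimizing quasi-monomial valuation and showing that its graded algebra is finitely generated requires substantial input from the theory of complements, Koll\'ar components, and higher-rank finite generation over $\IN^r$-graded linear series. Once this is granted, the remaining passage from a divisorial lc place of a complement to a destabilizing (weakly) special test configuration is the standard dictionary from \cite{LX14} and \cite{BJ20}, recorded in Theorem \ref{Theorem: weakly special divisor definition} and Theorem \ref{Theorem: special divisor definition}.
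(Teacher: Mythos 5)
The paper does not prove this statement; it is quoted as background from \cite{BJ20,LXZ22}, so there is no internal proof to compare against. Your outline correctly reproduces the standard argument from those references: the direction $\delta>1\Rightarrow$ K-stable is the formal consequence of the Fujita--Li criterion you describe, and the converse in the critical case $\delta=1$ genuinely requires the existence of a quasi-monomial minimizer together with the higher-rank finite generation theorem of \cite{LXZ22} to degenerate it to a divisorial lc place of a complement and hence to a non-trivial weakly special test configuration with non-positive Futaki invariant. You have correctly isolated that finite-generation step as the irreducible hard input rather than claiming to prove it, which is exactly the level at which this paper (and most of the literature) treats the result.
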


\begin{rmk}\rm
If $(X,\D)$ admits a torus $\IT$-action, we may define the $\IT$-equivariant delta invariant $\delta_{\IT}(X,\D)$ by taking infimum for $v\in \Val^\circ_X$ in (\ref{Eqnarray: definition of delta}). By $\IT$-equivariant K-stability \cite{Zhu21} and Remark \ref{Remark: K-stable implies no G_m action}, we have $\delta_{\IT}(X,\D)\le 1$. 
\end{rmk}

\subsection{Toric divisors over Fano $\IT$-varieties}
Let $(X,\D)$ be a log Fano pair with a $\IT=\IG_m^r$-action. 
We denote by $N=\Hom(\IG_m,\IT) \cong \IZ^r$ the coweight lattice of the $\IT$-action. Then any $\xi \in N$ determines a one-parameter subgroup $\xi: \IG_m \to \IT \seq \Aut(X,\D), t\mapsto \xi_t$. The $\IT$-action on $\CO_X$ is given by 
\begin{eqnarray} 
\label{Eqnarray. 1-PS action}
(\xi_t^*f)(x) = f(\xi_{t^{-1}}(x)), 
\end{eqnarray}
for any $\xi\in N, t\in \IG_m, f\in \CO_X$ and $x\in X$, see \cite[(2.21)]{Xu24}. 
Since the anti-canonical divisor $-(K_X+\D)$ admits a canonical $\IT$-linearization, we see that $R_m = H^0(X,-m(K_X+\D))$ admits a canonical weight decomposition $R_m=\oplus_{\alpha \in M} R_{m,\alpha}$, where $M=\Hom(\IT,\IG_m)\cong N^\vee$ is the weight lattice of the $\IT$-action and  
\begin{eqnarray} 
\label{Eqnarray. Weight decomposition components}
R_{m,\alpha} = \{s\in R_m\mid \xi_t^*s = t^{\la \alpha, \xi\ra} \cdot s \text{ for any } \xi \in N, t\in \IG_m\}. 
\end{eqnarray}

For any $\xi \in N$, it determines a product test configuration 
$$(\CX_\xi, \D_{\CX_\xi}, \CL_\xi,\eta=\eta_\xi):=(X, \D,-(K_X+\D), (\xi,1))\times \IA^1, m\in l_0\IN,$$
where $\eta=\eta_\xi = (\xi,1)$ is determined by the isomorphism
\begin{eqnarray*} 
i_\eta: \CX_\xi \setminus \CX_{\xi,0} &\cong& X\times (\IA^1\setminus\{0\}), \\
(x,t) &\mapsto& (\xi_{t^{-1}}(x), t).
\end{eqnarray*}
for any $(x,t)\in X\times (\IA^1\setminus\{0\})$. 
This is a special test configuration, hence determines a special divisorial valuation
\begin{eqnarray} 
\label{Eqnarray: toric divisor}
\wt_\xi := \ord_{\CX_0}|_{\CX_1} =: c_\xi \cdot \ord_{E_\xi}, 
\end{eqnarray}
where $c_\xi \in\IZ_{\ge 1}$ since the value group $\Gamma = \ord_{\CX_0}(K(\CX)^*)$ is $\IZ$ (see \cite[Proof of Theorem 4.6]{BHJ17}) and $E_\xi$ is a special divisor over $X$. 

\begin{defi}\rm 
We say that $E_\xi$ is a {\it toric divisor} over $X$ with respect to the $\IT$-action. 
\end{defi}

For any $s\in R_{m,\alpha}$, let $\pr_1^*s$ be its pull-back on $X\times (\IA^1\setminus\{0\})$. The $\IG_m$-invariant rational section $\bar{s} = i_\eta^*\pr_1^*s$ of $m\CL_\xi$ determined by $s$ is 
$$(i_{\eta}^*\pr_1^*s)(x,t)
=(\pr_1^*s)(i_{\eta}(x,t))
=(\pr_1^*s)(\xi_{t^{-1}}(x), t)
= s(\xi_{t^{-1}}(x)) 
= (\xi_t^*s)(x) 
= t^{\la\alpha,\xi\ra} s(x). $$
Hence $i_{\eta}^*(\pr_1^*s) = t^{\la\alpha, \xi\ra}\cdot (\pr_1^*s)$.  
So by Remark \ref{Remark: valuation and TC to filtration}, the filtration induced by the test configuration $\CX_\xi$ is 
\begin{eqnarray*} 
\CF_{\CX_\xi}^\lam R_m 
\,\,\,=\,\,\, 
\bigoplus_{\la\alpha,\xi\ra\ge \lam} R_{m,\alpha}
\,\,\,=\,\,\, 
\CF_{\triv,\xi}^\lam R_m. 
\end{eqnarray*}
On the other hand, by \cite[Lemma 6.6]{Li17}, \cite[Claim 5.4]{Fuj16} or \cite[Lemma 5.17]{BHJ17}, we have
\begin{eqnarray*} 
\CF_{\CX_\xi}^\lam R_m 
\,\,\,=\,\,\,
\{f\in R_m\mid \wt_\xi(f) \ge \lam+ mA_{X,\D}(\wt_\xi)\}
\,\,\,=\,\,\, 
\CF_{\wt_\xi}^{\lam+mA_{X,\D}(\wt_\xi)}R_m. 
\end{eqnarray*}
Hence $\CF_{\wt_\xi}=\CF_{\CX_\xi}(A_{X,\D}(\wt_\xi))$, and 
\begin{eqnarray} 
\label{Eqnarray: valuation of product TC}
\wt_\xi(s) = \la\alpha,\xi\ra + mA_{X,\D}(\wt_\xi), \quad 
s\in R_{m,\alpha}. 
\end{eqnarray}

Recall that the moment polytope $\BP\seq M_\IR$ of the $\IT$-action on $(X,\D)$ is defined by 
\begin{eqnarray*}
\BP   = \overline{\bigcup_{m\in l_0 \IN} \frac{1}{m} \BP_m}, \quad
\BP_m = \{\alpha \in M_\IZ\mid R_{m, \alpha} \ne 0 \}. 
\end{eqnarray*}
We have the following probability measures on $\BP$
\begin{eqnarray*} 
\DH_{\BP,m}
=
\sum_{\alpha\in \BP_m}  
\frac{\dim R_{m,\alpha}}{\dim R_m} \cdot
\delta_\frac{\alpha}{m}, \quad 
\DH_\BP = \mathop{\lim}_{m\to \infty} \DH_{\BP,m}. 
\end{eqnarray*}
Then the Futaki invariant for any $\xi \in N_\IR$ (product TC) can be written as
\begin{eqnarray} 
\label{Eqnarray: Fut on N}
\Fut(\xi) = -\int_\BP \la\alpha,\xi\ra \cdot \DH_\BP(d\alpha). 
\end{eqnarray}

\begin{rmk}\rm 
By (\ref{Eqnarray: valuation of product TC}) and (\ref{Eqnarray: Fut on N}), we have $\Fut(\xi) = \FL(\wt_\xi)$ for any $\xi \in N_\IQ$. Hence we directly see that if a log Fano pair $(X,\D)$ admits a $\IG_m$-action, then $\delta_\IT(X,\D) \le 1$. 
\end{rmk}

\subsection{The $\xi$-twist of valuations}
Let $X$ be a proper variety of dimension $n$ with an effective $\IT=\IG_m^r$-action. Then there exists a proper variety $Z$ of dimension $n-r$ and a $\IT$-equivariant birational map $\pi: X \dashrightarrow Z\times \IT$, where the $\IT$-action on $Z$ is trivial. The function field $K(X)$ of $X$ is the fractional field of $K(Z)[M] = \oplus_{\alpha \in M} K(Z) \cdot 1^\alpha$. For any valuation $\mu$ on $Z$ and $\xi \in N_\IR$ we define the $\IT$-invariant valuation $v_{\mu, \xi}$ on $X$ such that 
$$v_{\mu, \xi}(f)
= \min_\alpha\{\mu(f_\alpha)+\la \alpha, \xi \ra\},$$
for any $f=\sum_\alpha f_\alpha \cdot 1^\alpha \in K(Z)[M]$. By \cite[Lemma 4.2]{BHJ17} we know that any $\IT$-invariant valuation over $X$ is obtained in this way, and we get a non-canonical isomorphism 
$$\Val^\IT_X \cong \Val_Z \times N_\IR. $$ 
By considering toric divisors over $(X,\D)$, we will give an explicit construction of this isomorphism, see Lemma \ref{Lemma: plt Fano type model}. 
For any $v = v_{\mu, \xi_0}\in \Val^\IT_X$ and $\xi \in N_\IR$, we define the {\it $\xi$-twist} of $v$ by $v_\xi:= v_{\mu,\xi_0+\xi}$. One can check that the definition is independent of the choice of the birational map $X\dashrightarrow Z\times \IT$. 

\begin{thm}\cite[Proposition 3.12]{Li19} Let $(X,\D)$ be a log Fano pair admitting a $\IT$-action with co-weight lattice $N$. 
For any $\xi\in N_\IR$ and $v\in \Val^\IT_X$, we have
$$\Fut(\xi) = \FL(v_\xi) -\FL(v). $$
\end{thm}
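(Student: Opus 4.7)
The plan is to establish the pair of additivity identities
\begin{equation*}
A_{X,\D}(v_\xi) - A_{X,\D}(v) = A_{X,\D}(\wt_\xi),
\qquad
S(v_\xi) - S(v) = S(\wt_\xi),
\end{equation*}
for every $v \in \Val^\IT_X$ and $\xi \in N_\IR$. Granted both, subtracting reads $\FL(v_\xi) - \FL(v) = \FL(\wt_\xi)$, which combined with the identity $\Fut(\xi) = \FL(\wt_\xi)$ from the Remark just above (true for $\xi \in N_\IQ$ and extended by continuity to $N_\IR$) produces the theorem.

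For the $S$-side, the approach is a direct filtration computation. Since $v$ is $\IT$-invariant, the filtration $\CF_v$ on $R_\bu = R(X,\D)$ respects the weight decomposition $R_m = \bigoplus_\alpha R_{m,\alpha}$. Combining the presentation $v_\xi = v_{\mu,\xi_0+\xi}$ from the previous subsection with formula (\ref{Eqnarray: valuation of product TC}) for $\wt_\xi$ produces, on every pure-weight section $s \in R_{m,\alpha}$, the shift
\begin{equation*}
v_\xi(s) - v(s) = \la \alpha, \xi\ra + m A_{X,\D}(\wt_\xi) = \wt_\xi(s),
\end{equation*}
so that $\CF_{v_\xi}^{\lam}R_{m,\alpha}$ is $\CF_v^{\lam}R_{m,\alpha}$ shifted by the value of $\wt_\xi$ on $R_{m,\alpha}$. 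Summing $\lam \dim \gr^\lam_{v_\xi} R_m$ over weight classes, dividing by $m\dim R_m$, and passing to the limit using the weak convergence $\DH_{\BP,m} \to \DH_\BP$ together with (\ref{Eqnarray: Fut on N}) yields
\begin{equation*}
S(v_\xi) - S(v) = \int_\BP \la\alpha,\xi\ra\, \DH_\BP(d\alpha) + A_{X,\D}(\wt_\xi) = -\Fut(\xi) + A_{X,\D}(\wt_\xi).
\end{equation*}
Applying the same identity with $v$ replaced by $v_\triv$ (for which $(v_\triv)_\xi$ coincides with $\wt_\xi$ as a filtration on sections) evaluates the right-hand side as $S(\wt_\xi)$; comparing the two then delivers $S(v_\xi) - S(v) = S(\wt_\xi)$.

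For the $A$-side, the plan is to use the isomorphism $\Val^\IT_X \cong \Val_Z \times N_\IR$ recalled in the previous subsection to decompose $v = v_{\mu,\xi_0}$, and study the ray $\{v_{\mu,\xi_0+t\xi} : t\in\IR\}$. On a sufficiently fine $\IT$-equivariant log resolution of $v$, these valuations are simultaneously quasi-monomial with respect to a common family of exceptional divisors, and the log-discrepancy formula $A_{X,\D}(v_{\mu,\xi_0+t\xi}) = 1 + \sum a_i(\xi_0+t\xi)$ is manifestly affine in $t$. The slope depends only on $\xi$ (not on $\mu$), and by specialization to the toric case $\mu=\triv$ is identified as $A_{X,\D}(\wt_\xi)$, producing the $A$-additivity.

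The main obstacle in this plan is the $A$-side: whereas the $S$-side reduces to a soft moment-polytope computation together with the weak convergence of Duistermaat--Heckman measures, the $A$-side requires producing a common $\IT$-equivariant log resolution adapted to the whole twist ray $\{v_{\mu,\xi_0+t\xi}\}$ and carefully checking that the log discrepancy is genuinely affine along it with the slope predicted by the toric case.
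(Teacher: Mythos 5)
The central step of your plan --- the two additivity identities $A_{X,\D}(v_\xi)-A_{X,\D}(v)=A_{X,\D}(\wt_\xi)$ and $S(v_\xi)-S(v)=S(\wt_\xi)$ --- is false, and this is a genuine gap rather than a fixable technicality. The difference $A_{X,\D}(v_\xi)-A_{X,\D}(v)$ depends on $v$ and not only on $\xi$; this is precisely why the paper introduces, immediately after the statement, the function $\theta_\xi$ \emph{on} $\Val^{\IT}_X$ via $A_{X,\D}(v_\xi)=A_{X,\D}(v)+\theta_\xi(v)$. Concretely, take $X=\IP^1$ with the standard $\IG_m$-action fixing two points $p_0,p_\infty$, let $v=\wt_{1}=\ord_{p_0}$ and $\xi=-1$, so that $v_\xi=v_{\triv,0}$ is the trivial valuation: then $A(v_\xi)-A(v)=0-1=-1$ while $A(\wt_{-1})=A(\ord_{p_\infty})=1$. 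The same example kills the $S$-identity ($S(v_\xi)-S(v)=0-1=-1$ versus $S(\wt_{-1})=1$) and the pointwise formula $v_\xi(s)-v(s)=\wt_\xi(s)$ on $R_{m,\alpha}$: by (\ref{Eqnarray: valuation of product TC}) the left-hand side is $\la\alpha,\xi\ra-m$ while the right-hand side is $\la\alpha,\xi\ra+m$. The underlying reason is that $\xi\mapsto A_{X,\D}(v_{\mu,\xi})$ is only \emph{piecewise} linear on $N_\IR$: on a common equivariant model the ray $t\mapsto v_{\mu,\xi_0+t\xi}$ is affine only while it stays inside one monomial cone, and the slope there depends on $\mu$ and $\xi_0$, not just on $\xi$. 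Specializing the slope to the toric case $\mu=\triv$, $\xi_0=0$, as you propose, computes it in the wrong cone.

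The theorem is true because the $v$-dependent quantity cancels: one shows $A_{X,\D}(v_\xi)-A_{X,\D}(v)=\theta_\xi(v)$ and $S(v_\xi)-S(v)=\theta_\xi(v)-\Fut(\xi)$ with the \emph{same} function $\theta_\xi(v)$, and then subtracts; this is the content of (\ref{Eqnarray: A(v_xi) = A(v)+theta_xi(v)}) and (\ref{Eqnarray: S(v_xi) = S(v)+theta_xi(v)}) (the latter stated in the paper in the case $\Fut|_N=0$). Your filtration computation for the $S$-side is salvageable once the shift $mA_{X,\D}(\wt_\xi)$ is replaced by the correct $v$-dependent shift $m\,\theta_\xi(v)$, which arises from comparing local trivializations of $-m(K_X+\D)$ at the centers of $v$ and $v_\xi$; the term $\int_\BP\la\alpha,\xi\ra\,\DH_\BP(d\alpha)=-\Fut(\xi)$ then comes out exactly as you computed. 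Note that the paper gives no proof of this statement --- it is quoted from \cite[Proposition 3.12]{Li19} --- so the comparison here is against that argument and against the paper's own use of $\theta_\xi$ in the proof of Theorem \ref{Theorem: main thm}, where the $v$-dependence of $\theta_\xi(v)$ is essential.
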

We may define the function $\theta_\xi$ on $\Val^\IT_X$ by
\begin{eqnarray} 
\label{Eqnarray: A(v_xi) = A(v)+theta_xi(v)}
A_{X,\D}(v_\xi) = A_{X,\D}(v) + \theta_\xi(v). 
\end{eqnarray}
If $\Fut|_N=0$, then 
\begin{eqnarray} 
\label{Eqnarray: S(v_xi) = S(v)+theta_xi(v)}
S(v_\xi) = S(v) + \theta_\xi(v). 
\end{eqnarray}


The following lemma will be used in the proof of our main theorem. 

\begin{lem}
\label{Lemma: plt Fano type model}
Let $(X,\D)$ be a log Fano pair with an effective $\IG_m$-action, and $R_m=\oplus_{\alpha \in \IZ} R_{m, \alpha}$ be the canonical weight decomposition of the anti-canonical ring $R=\oplus_{m\in l_0\IN} R_m$. Then for any primitive $\xi\in N(\IG_m) \cong \IZ$, there exists a special divisor $E$ over $(X, \D)$ such that, for sufficiently divisible $m$, we have 
\begin{eqnarray}
\label{Eqnarray: ord_E = wt_xi + A(E)}
\ord_{E}(s) = \la\alpha,\xi\ra + m A_{X,\D}(E), 
\quad \forall \, s \in R_{m,\alpha}. 
\end{eqnarray}

Moreover, there exists an effective $\IQ$-divisor $\D_E$ on $E$ such that $(E,\D_E)$ is of klt Fano type, and an isomorphism $i:\Val_E\times N(\IG_m)_\IR \to \Val^{\IG_m}_X$ (still denote $i(v,0)$ by $v$) such that 
\begin{eqnarray}
\label{Eqnarray: A_X(v) = A_E(v)}
A_{X,\D}(v) = A_{E,\D_E}(v), \quad \forall v\in \Val_E. 
\end{eqnarray}
\end{lem}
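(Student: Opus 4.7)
My plan is to prove the lemma in two stages: first establishing (\ref{Eqnarray: ord_E = wt_xi + A(E)}) by identifying $E$ with the toric divisor $E_\xi$, then constructing $\D_E$ and the isomorphism $i$ from a plt-Fano type model. For the first stage, I would apply the toric divisor construction (\ref{Eqnarray: toric divisor}) to the primitive $\xi$, which produces the special test configuration $(\CX_\xi, \D_{\CX_\xi}, \CL_\xi)$ and the special divisor $E_\xi$ over $(X,\D)$ with $\wt_\xi = c_\xi \ord_{E_\xi}$ for some $c_\xi \in \IQ_{>0}$. Substituting into (\ref{Eqnarray: valuation of product TC}) yields
\begin{eqnarray*}
c_\xi \ord_{E_\xi}(f) &=& \la\alpha,\xi\ra + c_\xi m\, A_{X,\D}(\ord_{E_\xi}), \quad f \in R_{m,\alpha}.
\end{eqnarray*}
Since the $\IG_m$-action is effective and $\xi$ is primitive, the set $\{\la\alpha,\xi\ra : \alpha\in\BP_m,\, m\in l_0\IN\}$ generates $\IZ$, while $\ord_{E_\xi}$ is $\IZ$-valued; this forces $c_\xi = 1$, and setting $E := E_\xi$ completes the first statement.

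For the second stage, I would construct $(E, \D_E)$ via a plt-Fano type model. By Theorem \ref{Theorem: special divisor definition}, there exists a birational morphism $\pi: Y\to X$ and an effective $\IQ$-divisor $D$ on $Y$ such that $D+E \ge \pi_*^{-1}\D$, $(Y, D+E)$ is plt, and $-(K_Y+D+E)$ is ample; by an equivariant MMP the model $Y$ can be chosen $\IG_m$-equivariantly so that the action lifts to $Y$ with $E$ setwise invariant. I then set $\D_E := \Diff_E(\pi_*^{-1}\D)$. Since $(Y, \pi_*^{-1}\D + E)$ remains plt (lowering boundary coefficients preserves plt), plt-adjunction gives that $(E, \D_E)$ is klt. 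Moreover $-(K_E+\D_E)$ is the restriction to $E$ of $-(K_Y+D+E) + (D-\pi_*^{-1}\D)$, ample plus effective and hence big, so $(E, \D_E)$ is of klt Fano type.

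Next, for the isomorphism $i$, the key claim is that $\IG_m$ acts trivially on $E$: since $E = E_\xi$ geometrically encodes the quotient direction of the $\IG_m$-action on $X$, the induced action on the function field $K(E) \cong K(X)^{\IG_m}$ is trivial, and a careful $\IG_m$-equivariant choice of $Y$ makes the action on $E$ itself trivial. Consequently $Y$ admits a $\IG_m$-equivariant birational map to $E\times \IP^1$ (trivial on $E$, standard on $\IP^1$), so $X$ is $\IG_m$-equivariantly birational to $E\times \IG_m$. Applying \cite[Lemma 4.2]{BHJ17} then yields the isomorphism $i: \Val_E\times N(\IG_m)_\IR \to \Val_X^{\IG_m}$, with $i(\mu,\eta)$ the $\eta$-twist of the $\IG_m$-invariant extension $v_{\mu, 0}$ of $\mu$.

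Finally, to establish the log discrepancy identity with $v = i(\mu, 0)$ for divisorial $\mu$, the valuation $v$ lifts to a divisorial valuation on $Y$ centered on $E$ with $v(E)=0$ and $v|_E = \mu$. From $\pi^*(K_X+\D) = K_Y+\pi_*^{-1}\D + (1-A_{X,\D}(E))E$ one gets $K_Y+\pi_*^{-1}\D + E - \pi^*(K_X+\D) = A_{X,\D}(E)\cdot E$, hence
\begin{eqnarray*}
A_{X,\D}(v) &=& A_{Y,\pi_*^{-1}\D+E}(v) + A_{X,\D}(E)\, v(E) \\
&=& A_{Y,\pi_*^{-1}\D+E}(v) \,\,\,=\,\,\, A_{E,\D_E}(\mu),
\end{eqnarray*}
where the last equality is inversion of plt-adjunction for $(Y, \pi_*^{-1}\D + E)$. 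The identity extends to all $\mu\in \Val_E$ by approximation with divisorial valuations and continuity of log discrepancy along quasi-monomial rays. The main obstacle will be verifying the triviality of the $\IG_m$-action on $E$ in a manner compatible with the chosen plt-Fano model $Y$, which underpins both the identification of the BHJ17 quotient $Z$ with $E$ and the clean form of the discrepancy identity.
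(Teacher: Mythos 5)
Your proposal follows the same skeleton as the paper's proof. Part one is essentially identical: identify $E$ with the toric divisor $E_\xi$ and force $c_\xi=1$ from effectivity and primitivity. Note, though, that your argument only gives one direction: from ``$\{\la\alpha,\xi\ra\}$ generates $\IZ$'' and ``$\ord_{E_\xi}$ is $\IZ$-valued'' you get $\la\alpha,\xi\ra\in c_\xi\IZ$ and hence $c_\xi^{-1}\in\IZ_{>0}$; to exclude $c_\xi^{-1}=k\ge 2$ you additionally need that the value set $\{\ord_E(s):s\in R_m\}$ is not contained in a coset $k\IZ+b$, which is the second half of the paper's coset argument. The construction of $\D_E=\Diff_E(\pi_*^{-1}\D)$ from an equivariant plt Fano type model and the verification that $(E,\D_E)$ is of klt Fano type match the paper.

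The genuine gap is in the ``moreover'' part, and you flag it yourself: the triviality of the $\IG_m$-action on $E$ and, more importantly, the equivariant product structure of $X$ near $E$ are asserted rather than proved, and the log discrepancy identity is attributed to ``inversion of plt-adjunction,'' which is not the right tool --- inversion of adjunction compares singularity types (lc/klt/plt) across adjunction, not the individual log discrepancies $A_{Y,\D_Y+E}(v)$ and $A_{E,\D_E}(v|_E)$ of a given valuation and its restriction. The paper's mechanism is the Bialynicki--Birula decomposition: since $E$ is toric for the lifted action, $E\seq Y^{\IG_m}$, and on a $\IG_m$-equivariant log resolution $\tY$ of $(Y,\D_Y+E)$ there is an invariant open set $\tY^+\supseteq\tE$ with a $\IG_m$-equivariant locally trivial $\IA^1$-bundle $\tau:\tY^+\to\tE$. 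This one structure supplies both halves of the ``moreover'': the embedding $\tau^*:\Val_E\to\Val^{\IG_m}_X$ (hence, with \cite[Lemma 4.2]{BHJ17}, the isomorphism $i$), and the identity $A_{X,\D}(v)=A_{E,\D_E}(v)$, which is proved by observing that every vertical component of the crepant boundary $\D_{\tY^+}$ is itself an $\IA^1$-bundle over its trace on $\tE$, so that $A_{\tY^+,\D_{\tY^+}}(\tau^*\mu)=A_{\tE,\D_{\tE}}(\mu)$ and the chain of crepant pullbacks closes. Your final reduction ``extend from divisorial $\mu$ by approximation and continuity of $A$'' is also delicate, since $A$ is only lower semicontinuous on $\Val_E$ in general; the resolution argument avoids this by treating arbitrary valuations uniformly.
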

\begin{proof}
Let $E$ be the toric divisor induced by $\xi$, then by (\ref{Eqnarray: toric divisor}) and (\ref{Eqnarray: valuation of product TC}) we have 
\begin{eqnarray*} 
\ord_E(s) := c_\xi^{-1} \cdot \la\alpha,\xi\ra + mA_{X,\D}(E), \quad
\forall s \in R_{m,\alpha},  
\end{eqnarray*}
for some $c_\xi \in \IZ_{\ge 1}$ and sufficiently divisible $m$ (in particular, $mA_{X,\D}(E)\in \IZ$). 
Since the $\IG_m$-action is effective and $\xi\in N$ is primitive, there exists $\alpha\in M(\IG_m)$ such that $\la\alpha,\xi\ra=1$. If $c_\xi >1$, then for any $0\ne s\in R_{m,\alpha}$, we have 
\begin{eqnarray*} 
\ord_E(s) = c_\xi^{-1} + mA_{X,\D}(E) \notin \IZ, 
\end{eqnarray*}
which contradicts that $\ord_E(K(X)^*)\seq \IZ$. Hence $c_\xi=1$. 

By the equivariant version of Theorem \ref{Theorem: special divisor definition}, there exists a $\IG_m$-equivariant birational morphism $\pi:Y \to X$ extract precisely the prime divisor $E$, and there exists an effective $\IQ$-divisor $D$ on $Y$ with $D\ge \pi_*^{-1}\D +E$ and $\lfloor D \rfloor=E$ such that $(Y,D)$ is plt and $-K_Y-D$ is ample. Hence by adjunction $(E,\Diff_E(D-E))$ is a log Fano pair. Let $\D_Y=\pi_*^{-1}\D$ and $\D_E=\Diff_E(\D_Y) \le \Diff_E(D-E)$. We see that $(E,\D_E)$ is of klt Fano type. 

Finally, we construct the isomorphism of the valuation spaces using Bialynicki-Birula decomposition \cite[Theorem 4.1]{BB73}. Since the $\IG_m$-action lifts to $Y$ and $E$ is toric with respect to this $\IG_m$-action, we have $E\seq Y^{\IG_m}$. In other words, the induced $\IG_m$-action on $E$ is trivial. 
Let $f:\tY \to Y$ be a $\IG_m$-equivariant log resolution of the plt pair $(Y, \D_Y+E)$, and $(\tY, \D_{\tY}+\tE)$ be the crepant pull-back. Then the restriction $f_E: \tE\to E$ of $f$ is a log resolution of $(E,\D_E)$, and we denote by $(\tE,\D_{\tE})$ the crepant pull-back. It's clear that 
$\D_{\tE}=\D_{\tY}|_{\tE}$. 

The $\IG_m$-action also lifts to $\tY$ and $\tE \seq \tY^{\IG_m}$. Since $\tE$ is of codimension one, by \cite[Theorem 4.1]{BB73} there exists a $\IG_m$-invariant open subset $\tY^+ \seq \tY$ containing $\tE$ and a $\IG_m$-equivariant morphism $\tau: \tY^+ \to E$, which is a locally trivial $\IA^1$-bundle. 
For any valuation $v\in \Val_E$, let $U\seq E$ be an affine open neighbourhood of the generic point $\eta$ of $C_E(v)$ in $E$ such that $\tau|_U: \tau^{-1}U \to U$ is a trivial $\IA^1$-bundle. Then $\CO_{\tau^{-1}U} \cong \CO_U[t]$ and there exists an isomorphism $\kappa: K(U)(t)\cong K(\tau^{-1}U) = K(\tY^+)$ induced by $\tau$. We extend $v:K(U)^*\to \IR$ trivially to $v: K(U)(t)^*\to \IR$ by letting $v(t)=0$ and denote by $\tau^*v = \kappa_*v$, which is a valuation centered on $\tau^{-1}U \seq \tY^+$. The definition of $\tau^*v$ is independent of the choice of $U$. We get an inclusion:  
\begin{eqnarray*} 
\Val_{\tE} \stackrel{\tau^*}{\longrightarrow} \Val^{\IG_m}_{\tY^+}, \quad v\mapsto \tau^*v. 
\end{eqnarray*}
Hence
\begin{eqnarray*} 
\Val_{E}=\Val_{\tE} \stackrel{\tau^*}{\longrightarrow} \Val^{\IG_m}_{\tY^+} \seq \Val^{\IG_m}_{\tY} = \Val^{\IG_m}_X. 
\end{eqnarray*}
Then by \cite[Lemma 4.2]{BHJ17}, we get an isomorphism $i:\Val_E\times N(\IG_m)_\IR \to \Val^{\IG_m}_X$. 

Let $\D_{\tY^+} = \D_{\tY}|_{\tY^+}$ and consider the vertical components of this $\IG_m$-invariant divisor. With the same argument in the previous paragraph, we see that each vertical component $W$ is a locally trivial $\IA^1$-bundle over $W\cap \tE$. Hence we have (omit $\tau^*$)
\begin{eqnarray*} 
A_{\tY^+, \D_{\tY^+}}(v) = A_{\tE,\D_{\tE}}(v), \quad 
\forall v \in \Val^\circ_{\tE}.  
\end{eqnarray*}
Since all the pull-backs are crepant, we conclude that 
\begin{eqnarray*} 
A_{X,\D}(v) = A_{Y,\D_Y+E}(v) = A_{\tY,\D_{\tY}+\tE}(v) = A_{\tY,\D_{\tY}}(v)  = A_{\tY^+, \D_{\tY^+}}(v) = A_{\tE,\D_{\tE}}(v) = A_{E,\D_E}(v),   
\end{eqnarray*}
for any $v\in\Val_{\tE}$, where the third equality follows from $C_{\tY}(v) \nsubseteq \tE$. 
\end{proof}

\subsection{Multi-graded linear series and refinements} 
\begin{defi}\rm
Let $(X, \D)$ be a klt pair, and $L,L_1, \cdots, L_l$ be a sequence of line bundles on $X$. A {\it $\IN\times \IN^l$-graded linear series} $V_\bu$ on $X$ associated to those $L_i$ is a collection of finite dimensional subspaces
\begin{eqnarray*}
V_{m,\beta} 
\seq H^0(X_l, mL+\beta_1L_1+\cdots+\beta_lL_l), 
\end{eqnarray*} 
for $(m,\beta)=(m,\beta_1,\cdots,\beta_l)\in \IN\times \IN^l$ such that $V_0 = \IC$ and $V_{m,\beta}\cdot V_{m',\beta'} \seq V_{m+m',\beta+\beta'}$. 
For any $\beta\in \IQ^l_{\ge0}$, we denote by $V_{(1,\beta)}$ the ($\IN$-)graded linear series $\{V_{m(1,\beta)}=V_{m,m\beta}\}_m$. 
\end{defi}
Basic notions for $R_\bu$ introduced above are similarly defined for $V_\bu$. For example, filtrations, (compatible) basis type divisors, $S$-invariants, and $\delta$-invariants. See \cite{AZ22,MW23} for details. 

\begin{defi}\rm
Let $E$ be a prime divisor over $X$. The {\it $E$-refinement} $W^E_\bu$ of $V_\bu$ is defined by 
$$W^E_{m,\beta,j} := \gr_E^j V_{m, \beta} = \CF_E^j V_{m,\beta}/ \CF_E^{j+1} V_{m,\beta}, \quad j\in \IN. $$
By \cite[Example 2.6]{AZ22}, if $E$ is of plt-type over $X$ or $E$ is Cartier on some birational model of $X$, then $W^E_\bu$ is a $\IN\times \IN^{l+1}$-graded linear series on $E$. 
\end{defi}
As a consequence, we have the following one-to-one correspondence 
\begin{eqnarray}
\label{Eqnarray: basis type divisors 1-1 correspondence}
\{m\text{-basis type divisors of }V_\bu\text{ compatible with }E\} 
\longleftrightarrow
\{m\text{-basis type divisors of }W^E_\bu\}. 
\end{eqnarray} 

We will deal with refinements by toric divisors in the following sections. 
Assume that $(X,\D)$ admits a $\IG_m$-action, $L,L_1,\cdots,L_l$ are $\IG_m$-linearized, and $V_{m,\beta}\seq H^0(X_l, mL+\beta_1L_1+\cdots+\beta_lL_l)$ is $\IG_m$-invariant. All these $\IG_m$-actions are assumed to be effective. Then we have weight decomposition
\begin{eqnarray*}
V_{m,\beta} = \bigoplus_{\alpha \in M(\IG_m)} V_{m,\beta,\alpha}. 
\end{eqnarray*}
Let $\rho\in N(\IG_m)$ be a primitive generator. Assume that there exists a prime divisor $E$ over $X$ and $c_E\in\IQ_{>0}, a_E \in \IQ$ such that $\ord_E(s) = c_E \cdot \la \alpha, \rho \ra + m a_{E}$ for sufficiently divisible $m$ and any $\alpha \in M(\IG_m), s\in V_{m,\beta,\alpha}$ (in particular $ma_E\in \IZ$). 
Then by a similar argument of Lemma \ref{Lemma: plt Fano type model}, we see that $c_E=1$. 
Hence the $E$-refinement $W^E_\bu$ of $V_\bu$ satisfies 
\begin{eqnarray}
\label{Eqnarray: W_(m,j) = V_(m,alpha)}
W^E_{m,\beta, j} \cong V_{m,\beta,\alpha}
\end{eqnarray}
for $j = \la\alpha,\rho\ra + m a_E$. In particular, we have the following one-to-one correspondence 
\begin{eqnarray}
\label{Eqnarray: basis type divisors 1-1 correspondence G_m}
\{\IG_m\text{-invariant }m\text{-basis type divisors of }V_\bu\} 
\longleftrightarrow
\{m\text{-basis type divisors of }W^E_\bu\}. 
\end{eqnarray} 

\begin{rmk}\rm
\label{Remark: virtual}
As explained by Lemma \ref{Lemma: plt Fano type model}, the $\IG_m$-action on $E$, hence on $W^E_\bu$, is trivial. However, the decomposition $W^E_{m,\beta,\bu} = \oplus_{j\in \IZ} W^E_{m,\beta,j}$ reveals a $\IG_m$-action, and we simply say that $W^E_\bu$ admits a {\it virtual} $\IG_m$-action. Under this assumption, the isomorphism (\ref{Eqnarray: W_(m,j) = V_(m,alpha)}) induces a $\IG_m$-equivariant isomorphism of $W^E_\bu$ and $V_\bu$.  
\end{rmk}


\section{The sharpness of Abban-Zhuang estimate}
\label{Section: The sharpness of Abban-Zhuang estimate}

In this section, we prove a sharpness result of Abban-Zhuang estimate \cite[Theorem 3.1]{AZ22}, see also \cite[Lemma 5.1]{MW23} for a $\IT$-equivariant version. We first recall the Abban-Zhuang estimate. 

Let $f:(X, \D)\to U$ be a $\IT$-equivariant projective morphism, where $U$ is a quasi-projective variety and $(X,\D)$ is a klt pair both admitting $\IT$-actions. 
We fix a $\IT$-invariant subvariety $Z\seq X$. Let $F$ be a $\IT$-invariant plt-type divisor over $X$ with $C_X(F)\supseteq Z$. We denote by $\pi: Y \to X$ the associated plt-type blowup, and by $\D_F = \Diff_{F}(\D_Y)$, $\D_Y=\pi_*^{-1}\D$. Then for any $\IT$-invariant multi-graded linear series $V_\bu$ on $X$ with $F$-refinement $W_{\bu}$, we have 
\begin{thm}[Abban-Zhuang]
\label{Theorem: Abban-Zhuang estimate}
We have the following estimate: 
\begin{eqnarray}
\label{Inequality: Abban-Zhuang estimate}
\delta_{Z,\IT}(X,\D;V_\bu) \ge 
\min\Big\{\frac{A_{X,\D}(F)}{S(V_\bu;F)},  
\mathop{\inf}_{Z'\mapsto Z} \delta_{Z',\IT}(F,\D_F;W_\bu)
\Big\}, 
\end{eqnarray}
where the infimum runs over all $\IT$-invariant closed subvarieties $Z'\seq F$ 
(with $\dim Z' = \dim Z$) 
mapping to $Z\seq X$. 
\end{thm}

The main result of this section is the following sharpness result of Abban-Zhuang estimate. 

\begin{thm}
\label{Theorem: Sharpness of AZ}
If $\ord_F$ is a minimizer of $\delta_{Z,\IT}(X,\D;V_\bu)$, then we have 
\begin{eqnarray*}
\frac{A_{X,\D}(F)}{S(V_\bu;F)} =
\delta_{Z,\IT}(X,\D;V_\bu) \le 
\mathop{\inf}_{Z'\mapsto Z} \delta_{Z',\IT}(F,\D_F;W_\bu). 
\end{eqnarray*}
Otherwise, we have strict inequalities 
\begin{eqnarray*}
\frac{A_{X,\D}(F)}{S(V_\bu;F)} >
\delta_{Z,\IT}(X,\D;V_\bu) > 
\mathop{\inf}_{Z'\mapsto Z} \delta_{Z',\IT}(F,\D_F;W_\bu). 
\end{eqnarray*}
\end{thm}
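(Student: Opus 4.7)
The plan is to combine the Abban-Zhuang estimate (Theorem \ref{Theorem: Abban-Zhuang estimate}) with an explicit construction of $\IT$-equivariant quasi-monomial lifts of valuations from $F$ to $X$, and then to exploit the weighted-average structure of the resulting log discrepancies and $S$-invariants.

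The first and central step is to construct, for each non-trivial $\IT$-invariant valuation $w$ on $F$ with $C_F(w)\supseteq Z$ and each real $a>0$, a $\IT$-invariant quasi-monomial valuation $v_{a,w}\in \Val_X^{\IT,\circ}$ with $C_X(v_{a,w})\supseteq Z$ satisfying
\begin{equation*}
A_{X,\D}(v_{a,w}) = a\cdot A_{X,\D}(F) + A_{F,\D_F}(w),\qquad
S(V_\bu; v_{a,w}) = a\cdot S(V_\bu; F) + S(W_\bu; w).
\end{equation*}
The log-discrepancy identity is standard via plt-type adjunction on the plt blowup $\pi: Y\to X$ together with the different $\Diff_F$. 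For the $S$-invariant identity, for each sufficiently divisible $m$ I would pick a basis of $V_m$ simultaneously compatible with the $\ord_F$-filtration and, on each graded piece $\gr_F^j V_m$, with the $w$-filtration; splitting $\sum_i v_{a,w}(s_i)$ into $a\sum_i \ord_F(s_i)$ plus the sum of $w$-values of the $F$-leading terms, and invoking the one-to-one correspondence (\ref{Eqnarray: basis type divisors 1-1 correspondence G_m}) between basis type divisors of $V_\bu$ compatible with $F$ and those of $W_\bu$, identifies the second sum with $m\cdot \dim V_m\cdot S_m(W_\bu;w)$ and yields the equality in the limit.

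If $\ord_F$ minimizes $\delta := \delta_{Z,\IT}(X,\D;V_\bu)$, so $\delta = A_{X,\D}(F)/S(V_\bu;F)$, then testing $v_{a,w}$ against the infimum defining $\delta$ gives
\begin{equation*}
\frac{a\cdot A_{X,\D}(F) + A_{F,\D_F}(w)}{a\cdot S(V_\bu;F) + S(W_\bu;w)}
\ \ge\ \delta \ =\ \frac{A_{X,\D}(F)}{S(V_\bu;F)};
\end{equation*}
since the left-hand side is a weighted average of $A_{X,\D}(F)/S(V_\bu;F)$ and $A_{F,\D_F}(w)/S(W_\bu;w)$ with positive weights, sending $a\to 0^+$ forces $A_{F,\D_F}(w)/S(W_\bu;w)\ge A_{X,\D}(F)/S(V_\bu;F)$, and taking the infimum over $\IT$-invariant $w$ produces $\delta_{Z,\IT}(F,\D_F;W_\bu)\ge \delta$, which is the claimed inequality.

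If $\ord_F$ is not a minimizer, then $A_{X,\D}(F)/S(V_\bu;F) > \delta$ is immediate from the definition, giving the first strict inequality, and combined with (\ref{Inequality: Abban-Zhuang estimate}) this already forces $\delta \ge \delta_{Z,\IT}(F,\D_F;W_\bu)$. To promote this to a strict inequality the plan is to argue by contradiction: if equality held, then near-minimizers $w$ of $\delta_{Z,\IT}(F,\D_F;W_\bu)$ together with the lifts $v_{a,w}$ from Step 1 would allow one to match any divisorial minimizer of $\delta$ on $X$ with the weighted average $(aA_F+A_w)/(aS_F+S_w)$; as $A_{X,\D}(F)/S(V_\bu;F)>\delta$, that average stays strictly above $\delta$ for every $a>0$, forcing $\ord_F$ itself to be a minimizer and contradicting the hypothesis. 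The principal obstacle is verifying the $S$-invariant identity in Step 1 in the $\IT$-equivariant multi-graded setting: the joint compatibility of a basis with the $\ord_F$- and $w$-filtrations and the identification of the leading-term sum via (\ref{Eqnarray: basis type divisors 1-1 correspondence G_m}) form the technical heart of the argument; once secured, the rest reduces to short weighted-average manipulations.
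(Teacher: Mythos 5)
There is a genuine gap, and it sits exactly where you locate the ``technical heart'': the exact additivity of $S$ under the lift cannot hold in the stated generality, and this is not a repairable technicality but an internal contradiction. Indeed, if for every non-trivial $\IT$-invariant $w$ on $F$ centered over $Z$ and every $a>0$ there were $v_{a,w}\in\Val^{\IT,\circ}_{X}$ centered over $Z$ with $A_{X,\D}(v_{a,w})=aA_{X,\D}(F)+A_{F,\D_F}(w)$ and $S(V_\bu;v_{a,w})=aS(V_\bu;F)+S(W_\bu;w)$, then testing $v_{a,w}$ against the infimum defining $\delta:=\delta_{Z,\IT}(X,\D;V_\bu)$ and letting $a\to0^+$ would give $A_{F,\D_F}(w)/S(W_\bu;w)\ge\delta$ for \emph{every} $w$, hence $\delta_{Z,\IT}(F,\D_F;W_\bu)\ge\delta$ unconditionally --- contradicting the second assertion of the very theorem you are proving, which asserts a strict reverse inequality whenever $\ord_F$ is not a minimizer. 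So at least one of your two identities must fail for general $w$; and indeed your doubly compatible basis argument only ever produces one-sided bounds (the $F$-adic expansion of a basis element may achieve its minimum at a higher-order term, so $v_{a,w}(s_i)$ need not equal $a\,\ord_F(s_i)+w(\bar s_i)$), and the direction one can secure is the one that does not let you pass from $A(v_{a,w})/S(v_{a,w})\ge\delta$ to $\bigl(aA_{X,\D}(F)+A_{F,\D_F}(w)\bigr)/\bigl(aS(V_\bu;F)+S(W_\bu;w)\bigr)\ge\delta$. Your second case has an independent problem: the lifts only ever produce \emph{upper} bounds for $\delta$, and upper bounds are perfectly consistent with $\delta_{Z,\IT}(F,\D_F;W_\bu)=\delta$, so your contradiction never materializes and nothing forces $\ord_F$ to be a minimizer.

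The paper avoids lifts altogether. It introduces the relative invariant $\delta_{Z,\IT}(X,\D,F;V_\bu)=\inf_v\frac{A_{X,\D}(v)-A_{X,\D}(F)v(F)}{S(V_\bu;v)-S(V_\bu;F)v(F)}$ and proves (Theorem \ref{Theorem: relative delta = min}) that it equals $\min\bigl\{A_{X,\D}(F)/S(V_\bu;F),\ \delta_{Z,\IT}(F,\D_F;W_\bu)\bigr\}$, by writing a compatible basis type divisor as $\pi^*D=S(V;F)F+\Gamma$ and running inversion of adjunction for $(Y,\D_Y+F+\eta\Gamma)$ along $F$; log canonicity of $(F,\D_F+\eta\Gamma|_F)$ at $\eta_Z$ replaces any computation of $S$ for a lifted valuation. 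Both halves of the theorem then follow from the elementary inequality (\ref{Eqnarray: elementary inequality}) applied to $a=A_{X,\D}(v)$, $b=S(V_\bu;v)$, $a_0=A_{X,\D}(F)v(F)$, $b_0=S(V_\bu;F)v(F)$ for a minimizer $v$ of $\delta$. If you want to keep your strategy, the ingredient you must supply is the lower bound $\frac{A_{X,\D}(v)-A_{X,\D}(F)v(F)}{S(V_\bu;v)-S(V_\bu;F)v(F)}\ge\min\bigl\{A_{X,\D}(F)/S(V_\bu;F),\ \delta_{Z,\IT}(F,\D_F;W_\bu)\bigr\}$ for all $v$, valid in particular at a minimizer of $\delta$; that relative Abban--Zhuang estimate, not the lift, is what drives both the equality case and the strictness.
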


The theorem will help us to get some prior estimate of $\mathop{\inf}_{Z'\mapsto Z} \delta_{Z',\IT}(F,\D_F;W_\bu)$. 

\begin{defi}\rm
We define the {\it relative delta invariant} of $V_\bu$ with respect to $F$ as 
\begin{eqnarray}
\label{Eqnarray: relative delta invariant}
\delta_{Z,\IT}(X,\D,F;V_\bu):=
\mathop{\inf}_{v\in \Val^{\IT,\circ}_{X,Z}} f(v), 
\end{eqnarray}
where $f:\Val^{\IT,\circ}_{X,Z} \to (0, +\infty]$ is a function defined by 
\begin{eqnarray}
\label{Eqnarray: relative delta invariant f}
f(v) := \frac{A_{X,\D}(v)-A_{X,\D}(F)v(F)}{S(V_\bu;v)-S(V_\bu;F)v(F)}, 
\end{eqnarray}
when $v\ne c\cdot\ord_F$ for any $c\in \IR_{>0}$. And we define $f(\ord_F):=\frac{A_{X,\D}(F)}{S(V_\bu; F)}$. 
\end{defi}

\begin{rmk}\rm 
\label{Remark. both the numerator and the denominator are non-negative}
We remark that both the numerator and the denominator in (\ref{Eqnarray: relative delta invariant f}) are non-negative. 
Recall that $\pi:Y\to X$ is a plt type blowup with exceptional divisor $F$. We have 
\begin{eqnarray}
\label{Eqnarray: plt-type blowup, canonical bundle formula}
K_Y+\D_Y+(1-A_{X,\D}(F))F=\pi^*(K_X+\D), 
\end{eqnarray}
where $\D_Y=\pi_*^{-1}\D$. Hence 
$A_{X,\D}(v)-A_{X,\D}(F)v(F) 
=
A_{Y,\D_Y+F}(v) \ge 0$, and the equality holds if and only if $v=c\cdot \ord_F$ for some $c>0$, since $(Y, \D_Y+F)$ is plt. 

On the other hand, we may choose a $\IT$-invariant $m$-basis type divisor $D$ of $V_\bu$ compatible with both $F$ and $v$. Then $\pi^*D=S_m(V_\bu;F)F+\Gamma$ where $\Gamma$ is effective and does not contain $F$ as a component. Then $v(D) = S_m(V_\bu;v)$. Hence
$$S_m(V_\bu;v)-S_m(V_\bu;F)v(F)=v(D)-S_m(V_\bu;F)v(F) = v(\Gamma) \ge 0. $$
Taking $m\to \infty$ we see that $S(V_\bu;v)-S(V_\bu;F)v(F) \ge 0$. 
\end{rmk}

\begin{thm}
\label{Theorem: relative delta = min}
\begin{eqnarray*}
\delta_{Z,\IT}(X,\D,F;V_\bu) =
\min\Big\{\frac{A_{X,\D}(F)}{S(V_\bu;F)}, 
\mathop{\inf}_{Z'\mapsto Z} \delta_{Z',\IT}(F,\D_F;W_\bu)
\Big\}. 
\end{eqnarray*}
\end{thm}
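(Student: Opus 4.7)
For the upper bound $\le$, plugging $v=\ord_F$ into the definition gives $f(\ord_F)=A_{X,\D}(F)/S(V_\bu;F)$ at once. For any divisorial $\IT$-invariant $w\in\Val^\IT_F$ with center containing $Z$ (identified inside $C_X(F)$), I would construct a $\IT$-invariant quasi-monomial valuation $v$ on $Y$ along $F$ with $v(F)>0$ and induced valuation $v_F=w$. Then the canonical bundle formula \eqref{Eqnarray: plt-type blowup, canonical bundle formula} combined with plt adjunction for $(Y,\D_Y+F)$ gives
\begin{eqnarray*}
A_{X,\D}(v)-A_{X,\D}(F)\,v(F)=A_{Y,\D_Y+F}(v)=A_{F,\D_F}(w),
\end{eqnarray*}
while a parallel computation on compatible $m$-basis type divisors (using the correspondence \eqref{Eqnarray: basis type divisors 1-1 correspondence}) yields
\begin{eqnarray*}
S(V_\bu;v)-S(V_\bu;F)\,v(F)=S(W^F_\bu;w).
\end{eqnarray*}
Therefore $f(v)=A_{F,\D_F}(w)/S(W^F_\bu;w)$, and taking the infimum over divisorial $w$ (which are dense in $\Val^\IT_F$) bounds the relative delta invariant above by $\delta_{Z,\IT}(F,\D_F;W^F_\bu)$.

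For the lower bound $\ge$, let $v\in\Val^\IT_{X,Z}$ with $v\ne c\cdot\ord_F$. If $v(F)=0$, then $f(v)=A_{X,\D}(v)/S(V_\bu;v)\ge\delta_{Z,\IT}(X,\D;V_\bu)$, which Theorem \ref{Theorem: Abban-Zhuang estimate} bounds below by $\min\{A_{X,\D}(F)/S(V_\bu;F),\,\delta_{Z,\IT}(F,\D_F;W^F_\bu)\}$. If $v(F)>0$, I would approximate $v$ by quasi-monomial valuations along $F$ (using lower semicontinuity of $A$ and continuity of $S$ along such families), and then invoke the same two identities from the previous paragraph to rewrite $f(v)=A_{F,\D_F}(v_F)/S(W^F_\bu;v_F)\ge\delta_{Z,\IT}(F,\D_F;W^F_\bu)$.

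The main obstacle I anticipate is verifying the $S$-invariant identity $S(V_\bu;v)-S(V_\bu;F)\,v(F)=S(W^F_\bu;v_F)$ for quasi-monomial $v$ along $F$. The plan is to select, for each $m$, a basis of $V_{m,\beta}$ simultaneously compatible with the $\ord_F$-filtration and, after passing to graded pieces $\gr_F^j V_{m,\beta}=W^F_{m,\beta,j}$, compatible with the $v_F$-filtration on the refinement. For a quasi-monomial $v$ the $v$-value of such a basis element $s$ splits additively as $\ord_F(s)\,v(F)+v_F(\bar s)$, where $\bar s$ is the leading coefficient of $s$ along $F$. Summing, invoking the bijection \eqref{Eqnarray: basis type divisors 1-1 correspondence} between compatible $m$-basis type divisors, and passing to $m\to\infty$ gives the identity. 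The companion log-discrepancy identity $A_{Y,\D_Y+F}(v)=A_{F,\D_F}(v_F)$ is standard plt adjunction for the unique lc place $F$ of $(Y,\D_Y+F)$, and the quasi-monomial approximation lets us make sense of the restriction $v\mapsto v_F$ beyond the divisorial case.
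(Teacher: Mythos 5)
There is a genuine gap, and it sits exactly where you flagged it: the claimed identity $S(V_\bu;v)-S(V_\bu;F)\,v(F)=S(W_\bu;v_F)$. The additive splitting $v(s)=\ord_F(s)\,v(F)+v_F(\bar s)$ is false in general: writing $s=x^{j}u$ locally with $x$ a defining equation of $F$ and $x\nmid u$, a quasi-monomial $v$ along $F$ satisfies $v(u)=\min_k\{k\,v(F)+v_F(u_k)\}\le v_F(u_0)=v_F(\bar s)$, so only $v(s)\le \ord_F(s)v(F)+v_F(\bar s)$ holds (e.g.\ $s=y+x$ with weights $(1,10)$). Summing over a basis compatible with both $\CF_F$ and $\CF_v$ therefore yields only the one-sided bound $S_m(V_\bu;v)\le v(F)S_m(V_\bu;F)+S_m(W_\bu;v_F)$, i.e.\ $f(v)\ge A_{F,\D_F}(v_F)/S(W_\bu;v_F)$. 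That inequality points the right way for the lower bound (on quasi-monomial valuations), but it is fatal for your upper bound: to get $f(v)\le A_{F,\D_F}(w)/S(W_\bu;w)$ you need the \emph{reverse} inequality on the denominator, which your basis argument cannot produce and which is essentially equivalent to the sharpness statement you are trying to prove. The lower bound for arbitrary $\IT$-invariant $v$ with $v(F)>0$ is also unsupported: such a $v$ has no canonical restriction $v_F$, and the invoked semicontinuity of $A$ and ``continuity of $S$'' do not combine to transfer the estimate from quasi-monomial approximants while simultaneously controlling $v(F)$.

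The paper avoids both problems by never passing from a valuation on $F$ to one on $X$ (or back). It reduces to the non-graded case $V=V_m$ and characterizes each side of the equality as a log-canonical-threshold condition on compatible basis type divisors: $\eta\le\mu$ iff $(Y,\D_Y+F+\eta\Gamma)$ is lc at $\eta_Z$ for every $D\in\Omega_{V,F}$ with $\pi^*D=S(V;F)F+\Gamma$, and $\eta\le\lambda$ iff $(F,\D_F+\eta D')$ is lc at $\eta_Z$ for every $D'\in\Omega_W$. Inversion of adjunction for the plt pair $(Y,\D_Y+F)$ makes these two conditions equivalent, and the bijection $D\mapsto\Gamma|_F$ from $\Omega_{V,F}$ onto $\Omega_W$ (which you also cite) guarantees that \emph{all} basis type divisors of $W_\bu$ are reached — this surjectivity is what delivers the upper bound in place of your valuation construction. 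If you want to salvage your route, the exact $S$-identity for quasi-monomial valuations along $F$ would have to be established independently (e.g.\ via Okounkov bodies), but as written the argument does not close.
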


The proof of this theorem follows from a refined version of Abban-Zhuang's original argument using basis type divisors. For any $\IT$-invariant boundary $V$ on $X$ (see for example \cite[Section 5.1]{MW23}), we define the relative delta invariant of $V$ with respect to $F$ by 
\begin{eqnarray}
\label{Eqnarray: relative delta invariant for boundary}
\delta_{Z,\IT}(X,\D,F;V):=
\mathop{\inf}_{v\in \Val^{\IT,\circ}_{X,Z}} \frac{A_{X,\D}(v)-A_{X,\D}(F)v(F)}{S(V;v)-S(V;F)v(F)} =
\mathop{\inf}_{v\in \Val^{\IT,\circ}_{X,Z}} \frac{A_{Y,\D_Y+F}(v)}{v(\Gamma)}, 
\end{eqnarray}
where $\Gamma=\pi^*D - S(V;F)F$, and $D$ is a $\IT$-invariant basis type divisor of $V$ compatible with both $F$ and $v$. 
Then the $m$-th relative delta invariant of $V_\bu$ with respect to $F$ is defined by choosing $V=V_m$, 
\begin{eqnarray}
\label{Eqnarray: m-th relative delta invariant}
\delta_{Z,\IT,m}(X,\D,F;V_\bu) :=
\delta_{Z,\IT}(X,\D,F;V_m). 
\end{eqnarray}
By \cite[Corollary 2.10]{BJ20} or \cite[Lemma 3.2]{MW23}, it's not difficult to show that (\ref{Eqnarray: m-th relative delta invariant}) converges to (\ref{Eqnarray: relative delta invariant}) as $m\to \infty$. Hence it remains to prove the following non-graded version of Theorem \ref{Theorem: relative delta = min}. 

\begin{lem}
\label{Lemma: relative delta = min}
For any $\IT$-invariant boundary $V$ on $X$ with $F$-refinement $W$, we have 
\begin{eqnarray*}
\delta_{Z,\IT}(X,\D,F;V) =
\min\Big\{\frac{A_{X,\D}(F)}{S(V;F)}, 
\mathop{\inf}_{Z'\mapsto Z} \delta_{Z',\IT}(F,\D_F;W)
\Big\}. 
\end{eqnarray*}
\end{lem}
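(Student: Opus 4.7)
The plan is to prove both inequalities $\le$ and $\ge$ separately, with the $\le$ direction obtained from explicit test valuations realising each of the two candidates and the $\ge$ direction from a case analysis on $v(F)$ combined with Theorem~\ref{Theorem: Abban-Zhuang estimate}. For the $\le$ direction, first $v=\ord_F$ yields $f(\ord_F)=A_{X,\D}(F)/S(V;F)$ by the convention. To realise the second candidate, for each quasi-monomial $v_F=\sum_{i\ge 1}a_i\ord_{E_i^F}\in\Val^{\IT,\circ}_{F,Z}$ on a $\IT$-equivariant log resolution of $(F,\D_F)$, I construct a $\IT$-invariant extension $v=\sum_{i\ge 1}a_i\ord_{E_i}\in\Val^{\IT,\circ}_{X,Z}$, where each $E_i$ is a prime divisor on a $\IT$-equivariant modification of $Y$ meeting $F$ transversely along $E_i^F$. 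Since no $E_i$ equals $F$, one has $v(F)=0$, and by adjunction $A_{X,\D}(v)=A_{Y,\D_Y+F}(v)=A_{F,\D_F}(v_F)$; by SNC transversality, $v(\Gamma)=v_F(\Gamma|_F)$ for any $\IQ$-divisor $\Gamma$ on $Y$ not containing $F$. Picking a basis of $V$ simultaneously compatible with the two filtrations $\CF_F$ and $\CF_v$ and its associated $F$-compatible basis type divisor $D$ with $\pi^*D=S(V;F)F+\Gamma$, the restriction $\Gamma|_F$ is then a $v_F$-compatible basis type divisor of $W$, giving $S(V;v)=v(\Gamma)=v_F(\Gamma|_F)=S(W;v_F)$. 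Hence $f(v)=A_{F,\D_F}(v_F)/S(W;v_F)$, and density of quasi-monomial valuations in $\Val^{\IT,\circ}_{F,Z}$ yields $\delta_{Z,\IT}(X,\D,F;V)\le \delta_{Z,\IT}(F,\D_F;W)$.

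For the $\ge$ direction, fix $v\in\Val^{\IT,\circ}_{X,Z}$. If $v=c\ord_F$ the bound is immediate. If $v(F)=0$ and $v\ne c\ord_F$, then $f(v)=A_{X,\D}(v)/S(V;v)\ge \delta_{Z,\IT}(X,\D;V)$, which by Theorem~\ref{Theorem: Abban-Zhuang estimate} is at least $\min\{A_{X,\D}(F)/S(V;F),\delta_{Z,\IT}(F,\D_F;W)\}$. If $v(F)>0$ and $v\ne c\ord_F$, the center of $v$ lies on $F$; approximating $v$ by a quasi-monomial valuation $v=a_0\ord_{\tilde F}+\sum_{i\ge 1}a_i\ord_{E_i}$ on a $\IT$-equivariant SNC model containing $\tilde F=\mu_*^{-1}F$, the positivity $a_0>0$ forces every $E_i$ with $a_i>0$ to meet $\tilde F$ in a divisor $E_i\cap \tilde F$ on $F$. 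Setting $v_F:=\sum_{i\ge 1}a_i\ord_{E_i\cap \tilde F}\in\Val^{\IT,\circ}_{F,Z}$, the identity $A_{Y,\D_Y+F}(\ord_{\tilde F})=0$ combined with adjunction yields $A_{Y,\D_Y+F}(v)=A_{F,\D_F}(v_F)$, while SNC transversality yields $v(\Gamma)=v_F(\Gamma|_F)$; hence $f(v)\ge A_{F,\D_F}(v_F)/v_F(\Gamma|_F)\ge \delta_{Z,\IT}(F,\D_F;W)$. A standard limit argument then extends this to general $v$ with $v(F)>0$.

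The hardest step will be the quasi-monomial arrangements and their verifications, in particular producing $\IT$-equivariant SNC models admitting the transverse prolongations $E_i$ in the $\le$ direction, and carefully tracking the two identities $A_{Y,\D_Y+F}(v)=A_{F,\D_F}(v_F)$ and $v(\Gamma)=v_F(\Gamma|_F)$ through the quasi-monomial decomposition and subsequent limit in the $\ge$ direction.
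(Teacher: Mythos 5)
Your $\ge$ direction is essentially a re-run of the proof of the Abban--Zhuang estimate and can be pushed through, but the $\le$ direction contains a genuine gap, and it is precisely the half that the transverse-extension strategy cannot deliver. For a transverse extension $v=\sum a_i\ord_{E_i}$ of $v_F$ with $v(F)=0$, the two identities you rely on, $v(\Gamma)=v_F(\Gamma|_F)$ and ``$\Gamma|_F$ is $v_F$-compatible'', are both false in general, and the true inequalities go the wrong way. First, $\Gamma$ comes from a basis type divisor and is not in SNC position with your model, so one only has $v(\Gamma)\le v_F(\Gamma|_F)$: e.g.\ on $\IP^2$ with $F=\{z=0\}$, $E=\{y=0\}$ and $G=\{y^2-xz=0\}$ one gets $\ord_E(G)=0$ while $\ord_{E\cap F}(G|_F)=2$. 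Second, compatibility of a basis of $V$ with $\CF_v$ does not descend to compatibility of the induced basis of $W=\gr_F V$ with $\CF_{v_F}$ (these are different filtrations at different centers), so one only has $v_F(\Gamma|_F)\le S_m(W;v_F)$. Chaining these gives $S(V;v)=v(\Gamma)\le S(W;v_F)$, hence $f(v)=A_{X,\D}(v)/S(V;v)\ge A_{F,\D_F}(v_F)/S(W;v_F)$ --- the opposite of what is needed to conclude $\delta_{Z,\IT}(X,\D,F;V)\le\delta_{Z,\IT}(F,\D_F;W)$. Forcing equality $S(V;v)=S(W;v_F)$ for some extension is exactly the hard point; it holds in the special toric situation of Lemma \ref{Lemma: plt Fano type model} because of the $\IA^1$-bundle structure, but not for a general plt-type $F$.

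The paper avoids this entirely by never extending valuations: it characterizes both sides as log canonical thresholds. Writing $\mu=\delta_{Z,\IT}(X,\D,F;V)$ and $\lam$ for the minimum, one shows that $\eta\le\mu$ is equivalent to $(Y,\D_Y+F+\eta\Gamma)$ being lc at $\eta_Z$ for every $D\in\Omega_{V,F}$, that $\eta\le\lam$ is equivalent to $(F,\D_F+\eta D')$ being lc at $\eta_Z$ for every $D'\in\Omega_W$, and that the two lc conditions match under the bijection $D\mapsto\Gamma|_F$ by inversion of adjunction along $F$. In particular, when $\eta>\lam$ the witnessing valuation for $\mu<\eta$ arises as an lc place of $(Y,\D_Y+F+\eta\Gamma)$, which in general has $v(F)>0$ and is not a transverse prolongation of anything on $F$. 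If you want to keep a valuation-theoretic presentation, you must replace your $\le$ argument by this lc-threshold/inversion-of-adjunction step (or prove the nontrivial statement that some extension achieves $S(V;v)=S(W;v_F)$, which is not done in your proposal).
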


\begin{proof}
For simplicity, we denote by 
\begin{eqnarray*}
\mu = \delta_{Z,\IT}(X,\D,F;V), \quad
\lam = \min\Big\{\frac{A_{X,\D}(F)}{S(V;F)}, 
\mathop{\inf}_{Z'\mapsto Z}\delta_{Z',\IT}(F,\D_F;W)
\Big\}. 
\end{eqnarray*}
Fix a constant $0\le\eta\le A_{X,\D}(F)/S(V;F)$. In particular, we may choose $\eta=\lam$ or $\mu$. 

Let $D$ be a $\IT$-invariant basis type divisor of $V$ compatible with $F$. Then $\pi^*D=S(V;F)F + \Gamma^D$, where $\Gamma^D$ is an effective $\IR$-divisor not containing $F$ as a component. Note that $\Gamma^D|_F$ is a $\IT$-invariant basis type divisor of $W$, and the map 
\begin{eqnarray}
\label{Eqnarray: D to Gamma|_F, one-to-one correspondence}
\Omega_{V,F} \longrightarrow \Omega_W, \,\,\,
D \mapsto \Gamma^D|_F
\end{eqnarray}
is a one-to-one correspondence, where 
\begin{eqnarray*}
\Omega_{V,F} &:=& \{\IT\text{-invariant basis type divisors of } V \text{ compatible with } F\}, \\
\Omega_{W} &:=& \{\IT\text{-invariant basis type divisors of } W\}. 
\end{eqnarray*}
On the other hand, we have 
\begin{eqnarray*}
K_Y+\D_Y+ a_\eta F +\eta \Gamma^D=\pi^*(K_X+\D+\eta D), 
\end{eqnarray*}
where $a_\eta = 1-A_{X,\D}(F)+\eta S(V;F)\le 1$. Hence by inversion of adjunction, we have
\begin{eqnarray}
\label{Eqnarray. lc at eta_Z' equivalence}
(Y,\D_Y+F+\eta\Gamma^D)\text{ is lc at }\eta_{Z'} 
\,\,\, \Longleftrightarrow \,\,\, 
(F, \D_F+\eta\Gamma^D|_F)\text{ is lc at }\eta_{Z'}, 
\end{eqnarray}
where $Z'\seq F$ is a $\IT$-invariant subvariety mapping to $Z$ and $\eta_{Z'}$ is the generic point of $Z'$.

\begin{lem}
\label{Lemma: adjunction equivalences}
We have the following equivalences. 

{\rm (1)} The pair $(Y,\D_Y+F+\eta\Gamma^D)$ is lc at $\eta_{Z'}$ for any $\IT$-invariant subvariety $Z'\seq F$ mapping to $Z$ and for any $D\in \Omega_{V,F}$ if and only if $\eta \le \mu$. 

{\rm (2)} The pair $(F, \D_F+\eta D')$ is lc at $\eta_{Z'}$ for any $\IT$-invariant subvariety $Z'\seq F$ mapping to $Z$ and for any $D'\in \Omega_W$ if and only if $\eta \le \lam$. 
\end{lem}

\begin{proof} 
We first prove the equivalence (1). 
The former condition of (1) is equivalent to 
\begin{eqnarray*}
0 &\le& A_{Y, \D_Y + F + \eta \Gamma^D}(v) \\
&=& A_{Y, \D_Y + a_\eta F +(1-a_\eta)F + \eta \Gamma^D}(v) \\
&=& A_{X,\D+\eta D}(v) - (1-a_\eta) v(F) \\ 
&=& A_{X,\D}(v) - \eta v(D) -(A_{X,\D}(F)-\eta S(V;F)) v(F) \\
&=& \big(A_{X,\D}(v) - A_{X,\D}(F)v(F)\big) -  \eta\cdot \big(v(D) - S(V;F)v(F)\big)
\end{eqnarray*}
for any $\IT$-invariant subvariety $Z'\seq F$ mapping to $Z$, any $v \in \Val^{\IT,\circ}_{Y,Z'}$ and any $D\in \Omega_{V,F}$. Recall that 
$$S(V;v) = \sup_{D\in \Omega_{V,F}} v(D).$$
Hence the former condition of (1) is equivalent to 
\begin{eqnarray*}
0 \,\,\,\le\,\,\, \big(A_{X,\D}(v) - A_{X,\D}(F)v(F)\big) -  \eta \cdot \big(S(V;v) - S(V;F)v(F)\big) 
\end{eqnarray*}
for any $\IT$-invariant subvariety $Z'\seq F$ mapping to $Z$ and any $v \in \Val^{\IT,\circ}_{Y,Z'}$, equivalently, for any $v\in \Val^{\IT,\circ}_{X,Z}$ since 
$$\mathop{\cup}_{Z'\mapsto Z}\Val^{\IT,\circ}_{Y,Z'} = \Val^{\IT,\circ}_{X,Z}. $$
We conclude that the former condition of (1) is equivalent to 
\begin{eqnarray*}
\eta \le \mathop{\inf}_{v\in \Val^{\IT,\circ}_{X,Z}} \frac{A_{X,\D}(v) - A_{X,\D}(F)v(F)}{S(V;v) - S(V;F)v(F)}  = \mu. 
\end{eqnarray*}

\begin{rmk} \rm
Here we used the fact that $S(V;v) - S(V;F)v(F) \ge 0$. See Remark \ref{Remark. both the numerator and the denominator are non-negative}. 
\end{rmk}

Next, we prove the equivalence (2). The former condition of (2) is equivalent to 
\begin{eqnarray*}
0 &\le& A_{F, \D_F+\eta D'}(w) \\
&=& A_{F, \D_F}(w) - \eta \cdot w(D')
\end{eqnarray*}
for any $\IT$-invariant subvariety $Z'\seq F$ mapping to $Z$, any $w\in \Val^{\IT,\circ}_{F,Z'}$ and any $D'\in \Omega_{W}$. Recall that 
$$S(W;w) = \sup_{D'\in \Omega_{W}} w(D').$$
Hence the former condition of (2) is equivalent to 
\begin{eqnarray*}
0 \,\,\,\le\,\,\, 
 A_{F, \D_F}(w) - \eta \cdot S(W;w) 
\end{eqnarray*}
for any $\IT$-invariant subvariety $Z'\seq F$ mapping to $Z$ and any $w\in \Val^{\IT,\circ}_{F,Z'}$. 
We conclude that the former condition of (1) is equivalent to 
\begin{eqnarray*}
\eta \le 
\mathop{\inf}_{Z'\mapsto Z} 
\mathop{\inf}_{w\in \Val^{\IT,\circ}_{F,Z'}}  
\frac{A_{F,\D_F}(w)}{S(W;w)} 
= 
\mathop{\inf}_{Z'\mapsto Z} 
\delta_{Z',\IT}(F,\D_F;W)
= \lam. 
\end{eqnarray*}
The proof of Lemma \ref{Lemma: adjunction equivalences} is finished. 
\end{proof}

If we choose $\eta = \lam$, then $(F, \D_F+\lam \Gamma^D|_F)$ is lc at $\eta_{Z'}$ by Lemma \ref{Lemma: adjunction equivalences} (2), hence $(Y,\D_Y+F+\lam\Gamma^D)$ is lc at $\eta_{Z'}$ by (\ref{Eqnarray. lc at eta_Z' equivalence}) for any $\IT$-invariant subvariety $Z'\seq F$ mapping to $Z$ and for any $D\in \Omega_{V,F}$. We see that $\lam \le \mu$ by Lemma \ref{Lemma: adjunction equivalences} (1). Reversing the argument, we also have $\mu\le \lam$. Hence $\lam = \mu$. The proof is completed. 
\end{proof}

\begin{cor}
\label{Corollary: Sharpness of AZ}
If $\ord_F$ minimizes $\delta_{Z,\IT}(X,\D;V)$, then it minimizes $\delta_{Z,\IT}(X,\D,F;V)$. In particular 
\begin{eqnarray*}
\frac{A_{X,\D}(F)}{S(V;F)} =
\delta_{Z,\IT}(X,\D;V) \le 
\mathop{\inf}_{Z'\mapsto Z}\delta_{Z',\IT}(F,\D_F;W). 
\end{eqnarray*}
Otherwise, we have strict inequalities 
\begin{eqnarray*}
\frac{A_{X,\D}(F)}{S(V;F)} >
\delta_{Z,\IT}(X,\D;V) >
\mathop{\inf}_{Z'\mapsto Z}\delta_{Z',\IT}(F,\D_F;W). 
\end{eqnarray*}
\end{cor}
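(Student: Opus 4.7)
The plan is to deduce the corollary from Lemma \ref{Lemma: relative delta = min} (which computes the relative delta as $\min\{A_{X,\D}(F)/S(V;F),\, \delta_{Z,\IT}(F,\D_F;W)\}$), combined with the Abban--Zhuang estimate (Theorem \ref{Theorem: Abban-Zhuang estimate}) which gives $\delta_{Z,\IT}(X,\D;V) \ge \delta_{Z,\IT}(X,\D,F;V)$. The pivotal algebraic identity, obtained by direct manipulation of (\ref{Eqnarray: relative delta invariant f}), is
$$f(v) - \delta \;=\; \frac{\bigl(A_{X,\D}(v) - \delta\, S(V;v)\bigr) \;-\; \alpha\cdot v(F)}{S(V;v) - S(V;F)\,v(F)},$$
where $\delta := \delta_{Z,\IT}(X,\D;V)$ and $\alpha := A_{X,\D}(F) - \delta\cdot S(V;F) \ge 0$; both summands in the numerator are non-negative.

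If $\ord_F$ minimizes $\delta$ (so $\alpha = 0$), the identity gives $f(v) \ge \delta$ for every $v$, hence $\delta_{Z,\IT}(X,\D,F;V) \ge \delta$; combined with Abban--Zhuang this is an equality attained at $\ord_F$. The $\min$-formula of Lemma \ref{Lemma: relative delta = min} then reads $\min\{A_{X,\D}(F)/S(V;F),\, \delta_{Z,\IT}(F,\D_F;W)\} = \delta = A_{X,\D}(F)/S(V;F)$, forcing $\delta_{Z,\IT}(F,\D_F;W) \ge A_{X,\D}(F)/S(V;F)$, as claimed.

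If instead $\ord_F$ does not minimize $\delta$, so $\alpha > 0$, the first strict inequality $A_{X,\D}(F)/S(V;F) > \delta$ is tautological. For $\delta > \delta_{Z,\IT}(F,\D_F;W)$, I evaluate the identity at a minimizer $v^*$ of $\delta$ (which exists by \cite{LXZ22}); when $v^*(F) > 0$ we obtain
$$f(v^*) - \delta \;=\; -\frac{\alpha\cdot v^*(F)}{S(V;v^*) - S(V;F)\,v^*(F)} \;<\; 0,$$
so $\delta_{Z,\IT}(X,\D,F;V) < \delta$, and since $A_{X,\D}(F)/S(V;F) > \delta$ the $\min$-formula forces $\delta_{Z,\IT}(F,\D_F;W) < \delta$. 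The main obstacle is the subcase in which every minimizer $v^*$ satisfies $v^*(F) = 0$: direct substitution then gives only $f(v^*) = \delta$, and naive quasi-monomial interpolations $v_{a,1}$ of $(\ord_F, v^*)$ on a common log resolution yield $f(v_{a,1}) = \delta$ exactly, thanks to the additivity $S(V; v_{a,1}) = aS(V;F) + S(V;v^*)$ for jointly compatible basis divisors. Circumventing this will require either a perturbation that breaks the additivity in favor of strict subadditivity (for instance, choosing a log resolution on which $F$ and $v^*$ are not jointly compatible with basis divisors of $V$), or invoking the uniqueness of minimizers up to the $\IT$-action from the refined theory of \cite{LXZ22} to rule the scenario out directly.
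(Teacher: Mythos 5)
Your argument is essentially the paper's own proof. The paper likewise deduces the corollary from Lemma \ref{Lemma: relative delta = min} together with $\delta_{Z,\IT}(X,\D;V)\ge\delta_{Z,\IT}(X,\D,F;V)$, and then compares $f(v)=\frac{a-a_0}{b-b_0}$ with $\frac{a}{b}$ and $\frac{a_0}{b_0}$ (where $a=A_{X,\D}(v)$, $b=S(V;v)$, $a_0=A_{X,\D}(F)v(F)$, $b_0=S(V;F)v(F)$) via the elementary mediant inequality (\ref{Eqnarray: elementary inequality}); your identity for $f(v)-\delta$ is an equivalent repackaging of that inequality. Your treatment of the first case is complete and coincides with the paper's.

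Concerning the ``main obstacle'' you flag in the second case: the paper does not resolve it either. Its proof simply takes a minimizer $v$ of $\delta_{Z,\IT}(X,\D;V)$ and invokes $\frac{a_0}{b_0}>\frac{a}{b}\Leftrightarrow\frac{a_0}{b_0}>\frac{a}{b}>\frac{a-a_0}{b-b_0}$, which tacitly requires $b_0=S(V;F)v(F)>0$, i.e.\ $v(F)>0$. Since $C_X(F)\supseteq Z$ does not force every valuation centered along $Z$ to take a positive value on $F$, the sub-case you isolate --- every minimizer $v^*$ satisfies $v^*(F)=0$, whence $f(v^*)=\delta$ and one only gets $\delta_{Z,\IT}(F,\D_F;W)\le\delta_{Z,\IT}(X,\D;V)$ rather than the claimed strict inequality --- corresponds to an implicit assumption in the paper's proof rather than to a defect of your approach relative to it. So you have reproduced the paper's argument and in addition made explicit the one point it leaves unaddressed; neither of your two proposed remedies (breaking joint compatibility on a different resolution, or uniqueness of minimizers) appears in the paper.
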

\begin{proof}
We first see that $\delta_{Z,\IT}(X,\D;V) \ge \delta_{Z,\IT}(X,\D,F;V)$ by $\IT$-equivariant Abban-Zhuang estimate \cite[Lemma 5.1]{MW23} and Lemma \ref{Lemma: relative delta = min}. 
For any $v\in \Val^{\IT,\circ}_{X,Z}$, we denote by 
\begin{eqnarray*}
a=A_{X,\D}(v), \,\,\,
b=S(V; v), \,\,\,
a_0=A_{X,\D}(F)v(F), \,\,\,
b_0=S(V; F)v(F). \,\,\,
\end{eqnarray*}

If $\ord_F$ minimizes $\delta_{Z,\IT}(X,\D;V)$ but $v$ does not, we see that $v$ also does not minimize $\delta_{Z,\IT}(X,F,\D;V)$ by the following elementary inequality
\begin{eqnarray}
\label{Eqnarray: elementary inequality}
\frac{a_0}{b_0} < \frac{a}{b} 
\,\,\,\Longleftrightarrow\,\,\,
\frac{a_0}{b_0} <\frac{a}{b} < \frac{a-a_0}{b-b_0},  
\end{eqnarray}
since $b_0, b-b_0>0$. The first assertion follows. 

Otherwise, let $v$ be a minimizer of $\delta_{Z,\IT}(X,\D;V)$, then it follows by
\begin{eqnarray*}
\frac{a_0}{b_0} > \frac{a}{b} 
\,\,\,\Longleftrightarrow\,\,\,
\frac{a_0}{b_0} >\frac{a}{b} > \frac{a-a_0}{b-b_0}. 
\end{eqnarray*}
\end{proof}

\begin{proof}[Proof of Theorem \ref{Theorem: relative delta = min}]
Applying Lemma \ref{Lemma: relative delta = min} to $V= V_m$. Then it follows from the convergence of delta invariant $\lim_{m\to \infty}\delta_m = \delta$, see \cite[Corollary 2.10]{BJ20} or \cite[Lemma 3.2]{MW23}. 
\end{proof}

\begin{proof}[Proof of Theorem \ref{Theorem: Sharpness of AZ}]
It follows from Theorem \ref{Theorem: relative delta = min} and the same argument of Corollary \ref{Corollary: Sharpness of AZ}. 
\end{proof}

\begin{rmk}\rm
It's not difficult to generalize Theorem \ref{Theorem: relative delta = min} and \ref{Theorem: Sharpness of AZ} to the $g$-weighted setting. We need only to replace $V=V_m$ by the $g$-weighted boundary $V=V^g_m$ (see \cite[Definition 5.2]{MW23}) in the proof of Theorem \ref{Theorem: relative delta = min}. 
\end{rmk}

\section{A Valuative Criterion of (Weighted) K-polystability}
\label{Section: A Valuative Criterion of (Weighted) K-polystability}


Let $(X,\D)$ be a log Fano pair of dimension $n$, and $\IT\seq \Aut(X,\D)$ be a subtorus of rank $r$. Then the $\IT$-action lifts to the canonical divisor $K_X+\D$. Hence the anti-canonical ring $R=R(X,\D)=\oplus_{m\ge 0} R_m$ admits a weight decompsition $R_m= \oplus_{\alpha\in M}R_{m,\alpha}$, where $M=\Hom(\IT, \IG_m)$ is the weight lattice. Let $N=\Hom(\IG_m, \IT)=M^\vee$ be the lattice of one parameter subgroups. Assume that $N$ is generated by $\xi_1, \cdots, \xi_r$, and we denote by $\la\xi_i\ra$ the subtorus of $\IT$ generated by $\xi_i$. 

\subsection{Construction of $(X_r,\D_r,W^{X_r}_\bu)$}
\begin{const} \rm
\label{Construction: toric divisor sequence}
We construct a sequence of klt Fano type pairs $(X_i, \D_i)$ of dimension $(n-i)$ by induction on $0\le i\le r$ such that 

(a) it admits an effective $\IT_i$-action, where $\IT_i = \la\xi_{i+1}\ra \times \cdots \times \la\xi_r\ra \seq \IT$; 

(b) there exists a $\IT_i$-invariant $\IQ$-divisor $D_i\ge \D_i$ on $X_i$ such that $(X_i,D_i)$ is a log Fano pair; 

(c) there is an isomorphism $\Val_{X_{i}}\times N(\la\xi_i\ra)_\IR\to \Val^{\la\xi_i\ra}_{X_{i-1}}$ such that 
$$A_{X_{i-1},\D_{i-1}}(v)=A_{X_i,\D_i}(v), \quad \forall v\in \Val_{X_i}. $$

Let $(X_0,\D_0):= (X,\D)$ and $D_0=\D_0$. Assume that $(X_{i-1},\D_{i-1}, D_{i-1})$ is defined. We construct $(X_i, \D_i, D_i)$ by using the $\la\xi_i\ra$-action. By Lemma \ref{Lemma: plt Fano type model}, there exists a $\IT_{i-1}$-equivariant plt-type blowup $\pi_{i-1}: (Y_{i-1}, X_i)\to (X_{i-1}, D_{i-1})$ such that $X_i$ is the toric divisor over $X_{i-1}$ with respect to the $\la\xi_i\ra$-action. 
Hence the $\la\xi_i\ra$-action on $X_i$ is trivial and the $\IT_i$-action on $X_i$ is effective by induction hypothesis (the $\IT_{i-1} = \la\xi_i\ra \times \IT_i$-action on $X_{i-1}$ is effective). Let $\Gamma_{i-1}$ be an $\IT_{i-1}$-invariant effective $\IQ$-divisor on $Y_{i-1}$ such that $\Gamma_{i-1}+ X_i \ge \pi_{i-1,*}^{-1}D_{i-1}$,  $(Y_{i-1}, \Gamma_{i-1}+X_i)$ is plt and $-(K_{Y_{i-1}}+\Gamma_{i-1}+X_i)$ is ample. We define
$$D_i := \Diff_{X_i}(\Gamma_{i-1}) \ge
 \Diff_{X_i}(\pi_{i-1,*}^{-1} \D_{i-1}) =: \D_i. $$
By adjunction, we have $-(K_{X_i}+D_i)=
-(K_{Y_{i-1}}+\Gamma_{i-1}+X_i)|_{X_i}$, and $(X_i, D_i)$ is klt. Hence it is a log Fano pair. 
Now we get a $(n-i)$-dimensional Fano type klt pair $(X_i,\D_i)$ with an effective $\IT_i$-action. The isomorphism in (c) and the equality of log discrepancies follow directly from Lemma \ref{Lemma: plt Fano type model}. 

Since $X_i$ is of plt type over $(X_{i-1}, \D_{i-1})$, we can  inductively define a $\IN\times \IN^i$-graded linear series $W^{X_i}_\bu$ {\it on} $X_i$ by letting $W^{X_0}_\bu = R_\bu = R(X,\D)$, and letting $W^{X_i}_\bu$ be the $X_i$-refinement of $W^{X_{i-1}}_\bu$. 
\end{const}

\begin{lem} 
\label{Lemma: A=A, S=S}
There exists an isomorphism $i: \Val_{X_r}\times N_\IR\to \Val^{\IT}_X$ such that 
$$A_{X,\D}(v)=A_{X_r,\D_r}(v), \quad
S(R_\bu; v) =S(W^{X_r}_\bu; v), $$
for any $v\in \Val_{X_r}$ (still denote $i(v,0)$ by $v$). 
\end{lem}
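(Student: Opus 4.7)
The plan is to prove both equalities inductively on $r$, peeling off one $\IG_m$-factor at a time via Construction~\ref{Construction: toric divisor sequence}. At the $i$-th stage, $X_i$ arises from a $\IT_{i-1}$-equivariant plt-type blowup $\pi_{i-1}\colon Y_{i-1} \to X_{i-1}$ associated to the $\la\xi_i\ra$-action on $(X_{i-1},\D_{i-1})$, so Lemma~\ref{Lemma: plt Fano type model} yields an isomorphism $\Val_{X_i} \times N(\la\xi_i\ra)_\IR \cong \Val^{\la\xi_i\ra}_{X_{i-1}}$ with $A_{X_{i-1},\D_{i-1}}(v) = A_{X_i,\D_i}(v)$ for every $v \in \Val_{X_i}$. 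Because every step of the construction is $\IT$-equivariant and the residual torus $\IT_i$ continues to act effectively on $X_i$, composing these isomorphisms for $i=1,\ldots,r$ and passing to full $\IT$-invariants produces the desired isomorphism $\Val_{X_r} \times N_\IR \cong \Val^\IT_X$; chaining the log-discrepancy equalities then yields $A_{X,\D}(v) = A_{X_r,\D_r}(v)$ for $v \in \Val_{X_r}$.

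For the $S$-invariants, the core is to establish inductively that $S(W^{X_{i-1}}_\bu; v) = S(W^{X_i}_\bu; v)$ for every $v \in \Val_{X_i}$. Fix $m$ sufficiently divisible. Using the simultaneous compatibility trick behind \cite[Lemma 3.1]{AZ22}, one chooses a $\IT$-invariant $m$-basis type divisor $D^{(i-1)}$ of $W^{X_{i-1}}_\bu$ compatible with both $\CF_v$ and $\CF_{X_i}$ -- concretely by first splitting each graded piece of $W^{X_{i-1}}_\bu$ into $\CF_{X_i}$-weight subspaces and then refining each subspace by a $\CF_v$-compatible basis. Via the correspondence (\ref{Eqnarray: basis type divisors 1-1 correspondence}), the pullback decomposes as $\pi_{i-1}^* D^{(i-1)} = S_m(W^{X_{i-1}}_\bu; X_i)\cdot X_i + \Gamma^{(i-1)}$, and the residue $D^{(i)} := \Gamma^{(i-1)}|_{X_i}$ is an $m$-basis type divisor of $W^{X_i}_\bu$ that is still compatible with $\CF_v$ (viewed on $X_i$).

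Since $v \in \Val_{X_i}$ has its center properly contained in $X_i$, the Bialynicki-Birula description in the proof of Lemma~\ref{Lemma: plt Fano type model} realizes $v$ as the pullback of a valuation along a locally trivial $\IA^1$-bundle neighborhood of $X_i$; in particular $v(X_i) = 0$. As $\Gamma^{(i-1)}$ does not contain $X_i$ as a component, we deduce $v(D^{(i-1)}) = v(\Gamma^{(i-1)}) = v(\Gamma^{(i-1)}|_{X_i}) = v(D^{(i)})$, and the $\CF_v$-compatibility on both sides identifies these common values with $S_m(W^{X_{i-1}}_\bu; v)$ and $S_m(W^{X_i}_\bu; v)$. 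Letting $m \to \infty$ completes the inductive step, and iterating from $i=1$ to $i=r$ yields $S(R_\bu; v) = S(W^{X_r}_\bu; v)$.

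The main obstacle is verifying the identity $v(s) = v(\bar s)$ relating the evaluation of a section $s$ of pure $\CF_{X_i}$-weight on $X_{i-1}$ with that of its restriction $\bar s$ to $X_i$; this is precisely where the $\IA^1$-bundle structure supplied by Lemma~\ref{Lemma: plt Fano type model} is indispensable, since it reduces $v$-evaluation on $X_{i-1}$ to $v$-evaluation on $X_i$ for sections that do not vanish along $X_i$. Once this compatibility is secured, the remainder of the argument is essentially bookkeeping across the chain of plt-type refinements.
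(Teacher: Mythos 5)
Your proposal is correct and follows essentially the same route as the paper: the isomorphism and the equality of log discrepancies come from iterating Lemma \ref{Lemma: plt Fano type model}, and the equality of $S$-invariants comes from transporting a $\IT$-invariant $m$-basis type divisor compatible with $v$ through the chain of refinements via the correspondence of basis type divisors and checking $v(D)=v(D_r)$. The only difference is that you unfold the correspondence one toric divisor at a time and make explicit the intermediate facts ($v(X_i)=0$ and $v(\Gamma^{(i-1)})=v(\Gamma^{(i-1)}|_{X_i})$ via the $\IA^1$-bundle structure) that the paper leaves implicit in the single line $S_m(R_\bu;v)=v(D)=v(D_r)=S_m(W^{X_r}_\bu;v)$.
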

\begin{proof}
The isomorphism $i$ and the equality of log discrepancies are obtained by succeedingly using Lemma \ref{Lemma: plt Fano type model}. 
For the equality of $S$-invariants, it follows from the one-to-one correspondence (\ref{Eqnarray: basis type divisors 1-1 correspondence G_m}) of basis type divisors. Indeed, let $D$ be a $\IT$-invariant $m$-basis type divisor of $R_\bu$ compatible with $v$. Then using (\ref{Eqnarray: basis type divisors 1-1 correspondence G_m}) succeedingly, we get a $m$-basis type divisor $D_r$ of $W^{X_r}_\bu$ compatible with $v$. Hence 
$S_m(R_\bu; v) = v(D) = v(D_r) = S_m(W^{X_r}_\bu; v) $
for sufficiently divisible $m$. Taking $m\to \infty$ we get the required equality. 
\end{proof}

\subsection{Proof of the main theorem}
We are ready to prove the main theorem. 

\begin{thm} 
\label{Theorem: main thm}
Assume that $\IT\seq \Aut(X,\D)$ is a maximal subtorus and $\Fut|_N=0$. Then the log Fano pair $(X,\D)$ is K-polystable (K-semistable) if and only if 
$$\delta(X_r, \D_{X_r}; W^{X_r}_\bu)> (\ge) 1. $$
\end{thm}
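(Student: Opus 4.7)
The plan is to reduce the K-(semi/poly)stability of $(X,\D)$ to valuative inequalities on the klt Fano type pair $(X_r,\D_{X_r})$ equipped with the multi-graded linear series $W^{X_r}_\bu$. First, by $\IT$-equivariant K-stability \cite{Zhu21}, it suffices to analyze the Fujita--Li invariant $\FL(v) = A_{X,\D}(v) - S(R_\bu; v)$ only for $\IT$-invariant valuations $v \in \Val_X^{\IT,\circ}$. Iterating Lemma \ref{Lemma: plt Fano type model} along the toric divisor tower $X = X_0, X_1, \ldots, X_r$ produces a bijection $\Val_{X_r} \times N_\IR \cong \Val^{\IT}_X$, so every $\IT$-invariant valuation has a unique description $v = v_{\mu,\xi}$ with $\mu \in \Val_{X_r}$ and $\xi \in N_\IR$. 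Combining Lemma \ref{Lemma: A=A, S=S} with the twist formulas $A(v_\xi) = A(v)+\theta_\xi(v)$ and---using the hypothesis $\Fut|_N = 0$---$S(v_\xi) = S(v) + \theta_\xi(v)$, one obtains the key identity
\[
\FL(v_{\mu,\xi}) \,=\, A_{X_r,\D_{X_r}}(\mu) - S(W^{X_r}_\bu;\mu),
\]
independent of the twist $\xi$ and depending only on $\mu$.

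With this identity the K-semistability equivalence is immediate from Fujita--Li: $(X,\D)$ is K-semistable iff $\FL(v) \ge 0$ for every valuation, iff (after equivariant reduction and the identity above) $A_{X_r,\D_{X_r}}(\mu) \ge S(W^{X_r}_\bu;\mu)$ for every $\mu \in \Val^\circ_{X_r}$, iff $\delta(X_r,\D_{X_r};W^{X_r}_\bu) \ge 1$. For the backward direction of K-polystability, assume $\delta(X_r,\D_{X_r};W^{X_r}_\bu)>1$. K-semistability already holds, and by \cite{Zhu21} it further suffices to show that every $\IT$-equivariant special test configuration $\CX$ with $\Fut(\CX)=0$ is of product type. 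Such a $\CX$ arises from a $\IT$-invariant divisorial valuation $v=v_{\mu,\xi}$ with $\FL(v)=0$, so $A_{X_r,\D_{X_r}}(\mu) = S(W^{X_r}_\bu;\mu)$. If $\mu$ were non-trivial, this would force $A(\mu)/S(\mu) = 1$, contradicting $\delta>1$; hence $\mu = 0$, placing $v$ along the toric direction $v_{0,\xi}$ and making $\CX$ a product test configuration.

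For the forward direction of K-polystability, assume $(X,\D)$ is K-polystable. The K-semistable half gives $\delta(X_r,\D_{X_r};W^{X_r}_\bu) \ge 1$, and we derive a contradiction from the hypothetical equality. Applying \cite[Theorem 1.6]{LXZ22} in a form adapted to multi-graded linear series on a klt Fano type pair yields a quasi-monomial minimizer $\mu_*\in\Val^\circ_{X_r}$ with $A_{X_r,\D_{X_r}}(\mu_*) = S(W^{X_r}_\bu;\mu_*)$. Its lift $v_* = v_{\mu_*, 0} \in \Val_X^{\IT,\circ}$ is a $\IT$-invariant quasi-monomial valuation with $\FL(v_*) = 0$ and $\mu_*\ne 0$, which by LXZ22 induces a non-product $\IT$-equivariant special test configuration with vanishing Futaki invariant, contradicting K-polystability.

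The main obstacle is precisely this last step: securing the LXZ22 quasi-monomial minimizer statement for $\delta(X_r,\D_{X_r};W^{X_r}_\bu)$ in the multi-graded, non-log-Fano setting. A natural route is to realize $W^{X_r}_\bu$ as (a slice of) the anti-canonical ring of a higher-dimensional Fano type model obtained by reversing the toric tower, and then to transport the resulting minimizer back through the iterated correspondences of Lemma \ref{Lemma: plt Fano type model} and the basis-type bijections (\ref{Eqnarray: basis type divisors 1-1 correspondence G_m}). The remaining ingredients---twist formulas, the identification $\FL(v_{\mu,\xi}) = \FL(\mu)$, and equivariant reduction---are already in hand from Section \ref{Section: Preliminaries} and Theorem \ref{Theorem: Sharpness of AZ}.
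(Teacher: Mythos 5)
Your proposal is correct in outline and shares the paper's skeleton: equivariant reduction, the identification $\Val^{\IT}_X\cong \Val_{X_r}\times N_\IR$ from Lemma \ref{Lemma: plt Fano type model}, the equalities of Lemma \ref{Lemma: A=A, S=S}, and the twist formulas (\ref{Eqnarray: A(v_xi) = A(v)+theta_xi(v)})--(\ref{Eqnarray: S(v_xi) = S(v)+theta_xi(v)}) under $\Fut|_N=0$, which give your key identity $\FL(v_{\mu,\xi})=\FL(\mu)$; the semistability equivalence then comes out the same way (the paper phrases one direction via the sharpness of the Abban--Zhuang estimate and the elementary inequality (\ref{Eqnarray: elementary inequality}), but this is the same computation). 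Where you genuinely diverge is the forward polystability implication. You close it by demanding a quasi-monomial minimizer of $\delta(X_r,\D_{X_r};W^{X_r}_\bu)=1$ with finitely generated associated graded ring, so as to manufacture a non-product special test configuration directly --- and you correctly flag that transplanting the LXZ22 finite-generation machinery to a multi-graded linear series on a non-Fano klt pair is the weak point. The paper avoids this entirely: it only needs the \emph{existence} of a minimizing valuation $v\in\Val_{X_r}$ (a BJ20-style compactness statement), observes that every twist $v_\xi$ then also computes $A/S=1$, concludes $\delta^{\red}_{\IT}(X,\D)\le 1$, and invokes the equivalence of reduced uniform K-stability with K-polystability from \cite{XZ19,LXZ22} as a black box. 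So the obstacle you single out is an artifact of your route rather than of the theorem; if you adopt the reduced-delta detour, your argument matches the paper's. Your treatment of the backward polystability direction (every $\IT$-equivariant special test configuration with vanishing Futaki invariant must come from a toric valuation, hence be of product type) is the direct version that the paper records only as a remark after its proof, its actual proof being the contrapositive via \cite{XZ19}; both are fine, though you should note that maximality of $\IT$ together with \cite{Zhu21} is what lets you restrict to $\IT$-equivariant test configurations in the first place.
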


\begin{rmk}\rm
The key ingredient of the theorem is that, to test K-stability, we need only to compute delta around only one irreducible $\IT$-fixed subvariety $C_X(X_r)$ of $X$. 
\end{rmk}

\begin{proof}
First note that the assumption $\Fut|_N=0$ ensures that 
\begin{eqnarray*}
\frac{A_{X_{i-1}, \D_{i-1}}(X_i)}{S(W^{X_{i-1}}_\bu; X_i)} = 
\frac{A_{X,\D}(X_i)}{S(R_\bu; X_i)} =1, 
\end{eqnarray*} 
for any $1\le i\le r$, where $X_i$ is viewed as a divisorial valuation over $X$ by Lemma \ref{Lemma: A=A, S=S}. 

If $(X,\D)$ is K-semistable, then $\delta(X_r, \D_r; W^{X_r}_\bu)\ge 1$ by the sharpness of Abban-Zhuang estimate Theorem \ref{Theorem: Sharpness of AZ}. Conversely, assume that $(X,\D)$ is K-unstable. Let $w\in\Val_{X}^\IT=\Val_{X_r}\times \IN_\IR$ be a destablizing valuation. Then $w= v_\xi$ for some $v\in \Val_{X_r}$ and $\xi\in N_\IR$. Hence
\begin{eqnarray*}
1 >
\frac{A_{X,\D}(w)}{S(R_\bu; w)}
=
\frac{A_{X,\D}(v) + \theta_\xi(v)}{S(R_\bu; v)+ \theta_\xi(v)} 
=
\frac{A_{X_r,\D_r}(v) + \theta_\xi(v)}{S(W^{X_r}_\bu; v)+ \theta_\xi(v)}, 
\end{eqnarray*} 
where the first equality follows from $\Fut|_N = 0$, (\ref{Eqnarray: A(v_xi) = A(v)+theta_xi(v)}) and (\ref{Eqnarray: S(v_xi) = S(v)+theta_xi(v)}); the second equality follows from Lemma \ref{Lemma: A=A, S=S}. 
By the elementary inequality (\ref{Eqnarray: elementary inequality}), if $\theta_\xi(v)\ge 0$, then we have 
\begin{eqnarray*}
1 >
\frac{A_{X_r,\D_r}(v) + \theta_\xi(v)}{S(W^{X_r}_\bu; v)+ \theta_\xi(v)} 
\ge 
\frac{A_{X_r,\D_r}(v)}{S(W^{X_r}_\bu; v)}; 
\end{eqnarray*} 
else $\theta_\xi(v)< 0$, then 
\begin{eqnarray*}
1 > 
\frac{A_{X_r,\D_r}(v)}{S(W^{X_r}_\bu; v)}
>
\frac{A_{X_r,\D_r}(v) + \theta_\xi(v)}{S(W^{X_r}_\bu; v)+ \theta_\xi(v)}. 
\end{eqnarray*} 
We get a contradiction since $\delta(X_r, \D_r; W^{X_r}_\bu)\ge 1$. 

Next, we prove the second assertion. If $(X,\D)$ is K-polystable, then it is K-semistable. Hence $\delta(X_r, \D_r; W^{X_r}_\bu)\ge 1$ by the previous paragraph. Assume that $\delta(X_r, \D_r; W^{X_r}_\bu)= 1$, then using the same argument of \cite{BJ20}, there exists a valuation $v\in \Val_{X_r}$ such that 
\begin{eqnarray*}
1 = \frac{A_{X_r,\D_r}(v)}{S(W^{X_r}_\bu; v)} = 
\frac{A_{X,\D}(v)}{S(R_\bu; v)} =
\frac{A_{X,\D}(v) + \theta_\xi(v)}{S(R_\bu; v)+ \theta_\xi(v)} =
\frac{A_{X,\D}(v_\xi)}{S(R_\bu; v_\xi)}, 
\end{eqnarray*} 
for any $\xi\in N_\IR$. Hence $\delta^\red_\IT(X,\D)\le1$. By \cite{LXZ22}, the log Fano pair $(X,\D)$ is not K-polystable. We get a contradiction. 

Conversely, if $(X,\D)$ is K-semistable but not K-polystable, then by \cite{XZ19} there exists $w = v_\xi \in\Val_{X_r}\times \IN_\IR = \Val_{X}^\IT$ with $\xi\in N_\IR$ and non-trivial $v\in \Val_{X_r}$ such that 
\begin{eqnarray*}
1 =
\frac{A_{X,\D}(w)}{S(R_\bu; w)}
=
\frac{A_{X,\D}(v) + \theta_\xi(v)}{S(R_\bu; v)+ \theta_\xi(v)} 
=
\frac{A_{X_r,\D_r}(v) + \theta_\xi(v)}{S(W^{X_r}_\bu; v)+ \theta_\xi(v)} 
=
\frac{A_{X_r,\D_r}(v)}{S(W^{X_r}_\bu; v)}. 
\end{eqnarray*} 
Hence $\delta(X_r, \D_r; W^{X_r}_\bu)\le 1$. 
\end{proof}

\begin{rmk}\rm 
In the last paragraph of the proof, we indeed showed that $\delta(X_r, \D_r; W^{X_r}_\bu)> 1$ implies $(X,\D)$ being reduced uniformly K-stable. To prove K-polystability directly, we may choose $w$ to be the valuation induced by some non-product type $\IT$-invariant special TC with vanishing $\Fut$. 
\end{rmk}

\begin{ex}[Plane conics]\rm
\label{Example: plane conics}
As the first example, we consider the log Fano pair $(\IP^2, cQ)$ where $Q\seq \IP^2$ is a smooth conic curve. It's well-known by \cite{LS14} that this pair is K-polystable (K-semistable) if and only if $0\le c < (\le) \frac{3}{4}$. We state another proof based on Theorem \ref{Theorem: main thm}. 

We may assume that $Q=\{xz-y^2=0\}\seq \IP^2_{x,y,z}$ and consider the $\IG_m$-action $t\cdot [x,y,z] = [x,ty,t^2z]$, which generates a maximal torus of $\Aut(\IP^2,cQ)\cong \SL_2$. Let $Y\to \IP^2$ be the $(2,1)$-blowup at the point $[1,0,0]$ with exceptional divisor $E$ such that $\ord_E(y)=1$ and $\ord_E(z)=2$. Then $E$ is a toric divisor of the $\IG_m$-action. We denote by $\tQ$ and $\tl$ the strict transform of $Q$ and $l=\{z=0\}$ respectively. Let $p_0\in E$ be the unique singular point of $Y$, $p_1=\tl\cap E$ and $p_2 = \tQ\cap E$. Then $p_0,p_1,p_2$ are mutually distinct, $\Diff_E(0) = \frac{1}{2}p_0$ and $\Diff_E(cQ) = \frac{1}{2}p_0 + c\, p_2$. 

Now let's take refinement of $-(K_{\IP^2}+cQ) = \CO(3-2c)$ by $E$. It suffices to refine $\CO(1)$. 
First note that $\CO(1)-tE$ is ample for $0<t<1$. For $1\le t\le 2$, we have the following Zariski decomposition 
\begin{eqnarray*}
\CO(1)-tE = (2-t)(\CO(1)-E) + (t-1) \tl. 
\end{eqnarray*} 
Hence $S(\CO(1);E) = \frac{1}{\vol(\CO(1))} \int_0^2 \vol(\CO(1)-tE)dt = 1$, and $$\FL(E) = A_{\IP^2,cQ}(E) - S(\CO(3-2c);E) = 0. $$ 
In other word, $\Fut=0$ on $N(\IG_m)$. 

The refinement $W_\bu$ of $\CO(1)$ by $E\cong \IP^1$ follows by 
\begin{eqnarray*}
W^{E}_{(1,t)} = 
\left\{ \begin{array}{ll}
H^0\Big(\IP^1, \CO(\frac{t}{2})\Big) 
& 0\le t < 1,\\
H^0\Big(\IP^1, \CO(1-\frac{t}{2})\Big) + (t-1)\cdot p_1
& 1\le t\le 2. 
\end{array} \right. 
\end{eqnarray*}
Hence $S(W_\bu; p) = \frac{1}{6}$ for any $p\ne p_1$, and $S(W_\bu; p_1) = \frac{1}{3}$. They should be replaced by the multiplication with $(3-2c)$ if we replace $W_\bu$ by $W^E_\bu$, which is the refinement of $-K_{\IP^2}-cQ$ by $E$. Hence 
$$\delta(E, \frac{1}{2}p_0 + c\, p_2; W^E_\bu) = \min\{\frac{3}{3-2c},\frac{6-6c}{3-2c}\},$$ which is $ > (\ge) 1$ if and only if $0< c<(\le)\frac{3}{4}$. We conclude by Theorem \ref{Theorem: main thm}. 

\end{ex}

\begin{defi}\rm
We define the following multi-graded version of the Fujita-Li invariant 
$$\FL(W^{X_r}_\bu;v) := A_{X_r,\D_r}(v) - S(W^{X_r}_\bu;v), $$
for any valuation $v$ over $X_r$. 
\end{defi}

As a consequence, we have the following valuative criterion of K-polystability. 

\begin{cor}
Let $(X,\D)$ be a log Fano pair with a maximal torus $\IT=\IG_m^r$-action. Then it is K-polystable if and only if 
$\FL(W^{X_r}_\bu;v) > 0$ for any valuation $v$ over $X_r$.  
\end{cor}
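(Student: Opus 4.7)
The plan is to deduce the corollary from Theorem \ref{Theorem: main thm} by reformulating the condition $\delta(X_r,\D_r; W^{X_r}_\bu)>1$ as a pointwise positivity of $\FL$ on $\Val_{X_r}$. The forward direction is essentially definitional: if $(X,\D)$ is K-polystable, then Theorem \ref{Theorem: main thm} gives $\delta(X_r,\D_r; W^{X_r}_\bu)>1$, so $A_{X_r,\D_r}(v)/S(W^{X_r}_\bu;v)>1$ for every non-trivial valuation $v$ over $X_r$, whence $\FL(W^{X_r}_\bu;v)>0$.

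For the converse I would argue by contradiction, recycling the failure analysis carried out in the proof of Theorem \ref{Theorem: main thm}. Suppose $\FL(W^{X_r}_\bu;v)>0$ for all $v$ over $X_r$ but $(X,\D)$ fails to be K-polystable. Then one of two regimes occurs: either $(X,\D)$ is K-unstable, and $\IT$-equivariant K-stability produces a $\IT$-invariant valuation $w\in\Val^\IT_X$ with $A_{X,\D}(w)/S(R_\bu;w)<1$; or $(X,\D)$ is K-semistable but not K-polystable, in which case \cite{XZ19} (as used inside the proof of Theorem \ref{Theorem: main thm}) supplies a $\IT$-invariant $w\in\Val^\IT_X$ with $A_{X,\D}(w)/S(R_\bu;w)=1$ that is not of product type.

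In either regime, I would write $w=v_\xi$ under the identification $\Val^\IT_X\cong \Val_{X_r}\times N_\IR$ furnished by Lemma \ref{Lemma: A=A, S=S}, with $v\in\Val_{X_r}$ and $\xi\in N_\IR$. The assumption $\Fut|_N=0$ together with (\ref{Eqnarray: A(v_xi) = A(v)+theta_xi(v)}) and (\ref{Eqnarray: S(v_xi) = S(v)+theta_xi(v)}) gives
$$A_{X,\D}(w)=A_{X,\D}(v)+\theta_\xi(v),\qquad S(R_\bu;w)=S(R_\bu;v)+\theta_\xi(v),$$
so after applying Lemma \ref{Lemma: A=A, S=S} ($A_{X,\D}(v)=A_{X_r,\D_r}(v)$, $S(R_\bu;v)=S(W^{X_r}_\bu;v)$) the inequality on $w$ reduces exactly to $A_{X_r,\D_r}(v)\le S(W^{X_r}_\bu;v)$, i.e.\ $\FL(W^{X_r}_\bu;v)\le 0$. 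The final bookkeeping step is to check $v$ is non-trivial: otherwise $w=\wt_\xi$ is a toric valuation, and $\Fut|_N=0$ would force $A_{X,\D}(w)=S(R_\bu;w)$ with $w$ of product type, contradicting both failure regimes. This gives the required contradiction and closes the converse.

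The proof is essentially a transcription once Theorem \ref{Theorem: main thm} is in hand; no substantial obstacle is anticipated. The only mild subtlety is that the naive implication "$A/S>1$ pointwise $\Rightarrow \inf A/S>1$" is not automatic in the multi-graded setting, which is precisely why I route the converse through the failure-mode analysis of Theorem \ref{Theorem: main thm} (invoking \cite{XZ19} to separate the polystability borderline) rather than trying to argue directly at the level of delta invariants.
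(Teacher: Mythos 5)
Your argument is correct and matches the paper's intended route: the paper states this corollary as an immediate consequence of Theorem \ref{Theorem: main thm} with no written proof, and your fleshed-out version --- forward direction from $\delta(X_r,\D_r;W^{X_r}_\bu)>1$, converse by running the two failure modes (K-unstable via a $\IT$-invariant destabilizer, K-semistable-but-not-polystable via \cite{XZ19}) through the identification $\Val^\IT_X\cong\Val_{X_r}\times N_\IR$ --- is exactly how the theorem's proof is meant to be recycled. You also correctly flag and handle the one genuine subtlety (pointwise $\FL>0$ does not formally yield $\delta>1$), which the paper glosses over.
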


It is worth noting the generalization of Theorem \ref{Theorem: main thm} to the $g$-weighted setting (see \cite{MW23}). This case has the advantage that the modified Futaki invariant $\Fut_g$ is automatically vanishing on $N$. 

\begin{thm} 
Let $g:\BP\to \IR_{>0}$ be a weight function (see (\ref{Eqnarray: Fut_g = 0})) and assume that $\IT\seq \Aut(X,\D)$ is a maximal. Then the log Fano pair $(X,\D)$ is $g$-weighted K-polystable (K-semistable) if and only if 
$$\delta^g(X_r, \D_{X_r}; W^{X_r}_\bu)> (\ge) 1. $$
\end{thm}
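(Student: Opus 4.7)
The plan is to run the proof of Theorem \ref{Theorem: main thm} verbatim, with each ingredient replaced by its $g$-weighted counterpart. The key structural advantage of the weighted setting is that $\Fut_g|_N=0$ holds by definition of a weight function, so the hypothesis appearing in the non-weighted version is built in and never needs to be imposed separately.

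First, I would assemble the weighted building blocks. The $g$-weighted $S$-invariant $S^g$ and delta invariant $\delta^g$ on multi-graded linear series are set up in \cite[Section 5]{MW23}. The weighted analogue of Lemma \ref{Lemma: A=A, S=S} reads: under the isomorphism $\Val_{X_r}\times N_\IR\to\Val^\IT_X$ one has $A_{X,\D}(v)=A_{X_r,\D_r}(v)$ and $S^g(R_\bu;v)=S^g(W^{X_r}_\bu;v)$ for $v\in\Val_{X_r}$. The log discrepancy equality is unchanged. The $S^g$ equality follows from the same one-to-one correspondence (\ref{Eqnarray: basis type divisors 1-1 correspondence G_m}) between $\IT$-invariant basis type divisors of $R_\bu$ compatible with $v$ and basis type divisors of $W^{X_r}_\bu$: the $g$-weighting depends only on the $\IT$-weight decomposition of $R_m$, and Construction \ref{Construction: toric divisor sequence} preserves this decomposition through the virtual $\IG_m$-action of Remark \ref{Remark: virtual}, so applying $g$ before or after each refinement gives the same value. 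I would also need the twist formulas $A_{X,\D}(v_\xi)=A_{X,\D}(v)+\theta_\xi(v)$ and $S^g(v_\xi)=S^g(v)+\theta_\xi(v)$, the latter using $\Fut_g|_N=0$; these are the weighted form of \cite[Proposition 3.12]{Li19} implicit in \cite{HL23,BLXZ23}. Finally, the $g$-weighted sharpness of Abban-Zhuang (remark after Theorem \ref{Theorem: Sharpness of AZ}) applies at each stage of the refinement, and $\Fut_g|_N=0$ yields $A_{X_{i-1},\D_{i-1}}(X_i)/S^g(W^{X_{i-1}}_\bu;X_i)=1$ for every $1\le i\le r$.

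With these ingredients, the argument is mechanical. For K-semistability: if $(X,\D)$ is $g$-weighted K-semistable, iterate the weighted sharp Abban-Zhuang estimate along $(X_0,\D_0)\leftarrow\cdots\leftarrow(X_r,\D_r)$ to obtain $\delta^g(X_r,\D_r;W^{X_r}_\bu)\ge 1$; conversely, a destabilizing $\IT$-invariant valuation $w=v_\xi$ with $v\in\Val_{X_r}$ and $\xi\in N_\IR$ produces, via the twist formulas combined with the elementary inequality (\ref{Eqnarray: elementary inequality}), some $v$ with $A_{X_r,\D_r}(v)/S^g(W^{X_r}_\bu;v)<1$. For K-polystability, I would invoke the weighted reduced-uniform characterization of K-polystability from \cite{BLXZ23} (the $g$-weighted counterpart of \cite{XZ19,LXZ22}), then repeat the last two paragraphs of the proof of Theorem \ref{Theorem: main thm} with $S$ replaced by $S^g$ throughout, producing either a K-polystable degeneration (from the equality case) or a destabilizing reduced valuation.

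The principal technical point is the verification of the weighted twist formula $S^g(v_\xi)=S^g(v)+\theta_\xi(v)$; the weighted integration introduces the density $g$ on $\BP$, and one must check that translating the valuation by $\xi\in N_\IR$ contributes an additive correction equal to the unweighted $\theta_\xi$. This should follow because $\theta_\xi$ arises from the $\IT$-weight grading alone, and twisting by $\xi$ shifts that grading uniformly; the $g$-integral over $\BP$ therefore picks up the same $\la\alpha,\xi\ra$-shift as in the unweighted case, provided $\Fut_g|_N=0$. Once this is in place, every other step is a direct translation of the argument of Theorem \ref{Theorem: main thm}.
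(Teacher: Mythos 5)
Your proposal is correct and follows exactly the route the paper takes: the paper's own proof of this theorem is the one-line remark that one repeats the argument of Theorem \ref{Theorem: main thm} with every $S$-invariant replaced by its $g$-weighted counterpart $S^g$, the hypothesis $\Fut_g|_N=0$ being automatic from the definition of a weight function. Your more detailed accounting of the weighted ingredients (the $g$-weighted version of Lemma \ref{Lemma: A=A, S=S}, the twist formula for $S^g$, and the weighted sharpness of Abban--Zhuang) simply makes explicit what the paper leaves implicit.
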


\begin{proof}
The proof is the same as the previous one, only replacing the $S$-invariant by the $g$-weighted version, that is, $S^g$-invariant, see \cite{MW23}. 
\end{proof}

\section{Applications: existence of $g$-solitons}
\label{Section: Applications: almost complete linear series}


In this section, we give a more detailed study of the triple $(X_r,\D_r,W^{X_r}_\bu)$ in Theorem \ref{Theorem: main thm}. 
The base polytope of $W^{X_r}_\bu$ (see \cite[Section 2.7]{MW23}) is a shifting of the moment polytope $\BP$ of $R_\bu$ with respect to the $\IT$-action, that is, 
$R_{m(1,\alpha_1,\cdots, \alpha_r)} \cong W^{X_r}_{m(1,\alpha+a_1,\cdots,\alpha_r+a_r)}, $
where $a_i=A_{X,\D}(\ord_{X_i})$. We define the {\it normalized} linear series $W_\bu$ by 
\begin{eqnarray}
W_{m(1,\alpha_1,\cdots, \alpha_r)} := W^{X_r}_{m(1,\alpha_1+a_1,\cdots,\alpha_r+a_r)},\quad \alpha\in \BP_\IQ. 
\end{eqnarray}
Hence the base polytope of $W_\bu$ is the same as the moment polytope of $R_\bu$ with respect to the $\IT$-action.

Fix a continuous function $g:\BP\to \IR_{>0}$ with the property 
\begin{eqnarray}
\label{Eqnarray: Fut_g = 0}
\int_\BP \alpha_i\cdot g(\alpha) \vol(W_{(1,\alpha)}) d\alpha = 0, \quad 1\le i \le r. 
\end{eqnarray}
In other word, the modified Futaki invariant $\Fut_g|_N = 0$. 
The function $g$ is called a {\it weight function}. 
We will work in the $g$-weighted setting in this section. The $g$-weighted volume of $W_\bu$ is 
\begin{eqnarray}
\label{Eqnarray: v^g}
\BV^g=\int_\BP g(\alpha) \vol(W_{(1,\alpha)}) d\alpha. 
\end{eqnarray}
We may define the $g$-weighted measure $\DH^g_\BP$ (see \cite[Section 3.3]{MW23}) on $\BP$ by  
\begin{eqnarray*}
\DH^g_\BP(d \alpha) :=\frac{1}{\BV^g} \cdot g(\alpha) \vol(W_{(1,\alpha)}) d\alpha. 
\end{eqnarray*}
Then (\ref{Eqnarray: Fut_g = 0}) and (\ref{Eqnarray: v^g}) can be reformulated as
\begin{eqnarray}
\label{Eqnarray: Int DH^g_P =1, Int alpha_i DH^g_P =0}
\int_\BP \alpha_i\cdot \DH^g_\BP(d\alpha) = 0, \quad
\int_\BP \DH^g_\BP(d\alpha) = 1. 
\end{eqnarray}

\subsection{Almost complete condition}
Let $L$ be a big line bundle on $X_r$. Assume that there exist continuous functions $f, k_j$ on $\BP\seq \IR^r$ and prime divisors $F_j$ on $X_r$ such that we have the following decomposition of linear series
\begin{eqnarray}
\label{Eqnarray: almost complete assumption}
W_{(1,\alpha)} = R(f(\alpha) L) + F(\alpha) 
\end{eqnarray}
for any $\alpha \in \BP$, where $R(f(\alpha)L)$ is the graded linear series generated by $f(\alpha)L$, and $F(\alpha)=\sum_j k_j(\alpha) F_j$ is the fixed part of the linear series $W_{(1,\alpha)}$. This is just the {\it almost complete} condition introduced by \cite{AZ22} and the equality in (\ref{Eqnarray: almost complete assumption}) could be weakened to asymptotical equivalence. We use the form (\ref{Eqnarray: almost complete assumption}) for simplicity of notions. 


\begin{lem}
Under the assumption (\ref{Eqnarray: almost complete assumption}), for any $v\in \Val_X$,  we have 
\begin{eqnarray}
S^g(W_\bu, v) = \lam\cdot S(L; v) + v(F)
\end{eqnarray}
where $\lam=\int_\BP f(\alpha)\cdot \DH^g_\BP(d\alpha), F = 
\int_\BP F(\alpha)\cdot \DH^g_\BP(d\alpha)$ and $$\DH^g_\BP(d\alpha) = \frac{1}{\BV^g}\vol(L) f(\alpha)^{n-r} g(\alpha)d\alpha. $$ 
\end{lem}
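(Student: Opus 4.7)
The plan is to express $S^g(W_\bu; v)$ as an integral of slice $S$-invariants $S(W_{(1,\alpha)}; v)$ against the $g$-weighted Duistermaat--Heckman measure $\DH^g_\BP$, and then compute each slice via the almost complete decomposition (\ref{Eqnarray: almost complete assumption}). Following the weighted $S$-invariant formula developed in \cite{MW23}, the integral representation reads
$$S^g(W_\bu; v) = \int_\BP S\bigl(W_{(1,\alpha)}; v\bigr) \DH^g_\BP(d\alpha),$$
where $\DH^g_\BP$ is the measure introduced just before the statement of the lemma.

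For fixed $\alpha \in \BP$, assumption (\ref{Eqnarray: almost complete assumption}) writes $W_{(1,\alpha)}$ as the sum of the complete graded linear series $R(f(\alpha)L)$ and the fixed part $F(\alpha)$. Any $m$-basis type divisor of $W_{(1,\alpha)}$ therefore has the form $D_{\mathrm{mov}} + F(\alpha)$, where $D_{\mathrm{mov}}$ is an $m$-basis type divisor of $R(f(\alpha)L)$. Taking $v$-values and passing to the limit $m \to \infty$ yields
$$S\bigl(W_{(1,\alpha)}; v\bigr) = S\bigl(R(f(\alpha)L); v\bigr) + v\bigl(F(\alpha)\bigr) = f(\alpha)\cdot S(L; v) + v\bigl(F(\alpha)\bigr),$$
using the homogeneity of the $S$-invariant under rescaling of a complete linear series.

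For the density of $\DH^g_\BP$: since $W_{(1,\alpha)}$ and $R(f(\alpha)L)$ differ by the fixed effective divisor $F(\alpha)$, they have the same volume, hence $\vol(W_{(1,\alpha)}) = \vol(f(\alpha)L) = f(\alpha)^{n-r}\vol(L)$ on the $(n-r)$-dimensional variety $X_r$. Combined with the definition $\DH^g_\BP(d\alpha) = \frac{1}{\BV^g} g(\alpha)\vol(W_{(1,\alpha)})\,d\alpha$, this yields the stated density formula. Substituting the slice identity into the integral representation and using $\IR$-linearity of $v$ (valid because $F(\alpha) = \sum_j k_j(\alpha) F_j$ has only finitely many prime components $F_j$) gives
$$S^g(W_\bu; v) = S(L; v)\int_\BP f(\alpha)\,\DH^g_\BP(d\alpha) + v\!\left(\int_\BP F(\alpha)\,\DH^g_\BP(d\alpha)\right) = \lambda\cdot S(L;v) + v(F).$$

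The only nontrivial point is the slice identity for $S$: in the general almost complete setting (\ref{Eqnarray: almost complete assumption}) may only hold asymptotically, so the compatibility of $m$-basis type divisors with the movable-plus-fixed splitting needs a short argument via Okounkov bodies of graded linear series. Once this and the polytope integral representation of $S^g$ from \cite{MW23} are granted, the computation is essentially bookkeeping.
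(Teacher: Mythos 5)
Your proposal is correct and follows essentially the same route as the paper: integrate the slice invariant $S(W_{(1,\alpha)};v)$ against $\DH^g_\BP$ and use the movable-plus-fixed decomposition to get $S(W_{(1,\alpha)};v)=f(\alpha)S(L;v)+v(F(\alpha))$. The only cosmetic difference is that the paper derives the slice identity from the filtration volume formula $S=\frac{1}{\vol}\int_0^\infty\vol(\CF^{(t)})\,dt$ split at $t=v(F(\alpha))$, whereas you use compatible basis type divisors; these are equivalent characterizations, and since the paper explicitly adopts the exact equality form of (\ref{Eqnarray: almost complete assumption}), the asymptotic caveat you raise at the end is not needed here.
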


\begin{proof}
We denote by $\CF=\CF_v$ the filtration induced by $v$. Then
\begin{eqnarray*}
\CF^{(t)}W_{(1,\alpha)} = 
\left\{ \begin{array}{ll}
W_{(1,\alpha)} & t\le v(F(\alpha)), \\
\CF^{(t- v(F(\alpha)))} R(f(\alpha) L) & t> v(F(\alpha)). 
\end{array} \right. 
\end{eqnarray*}
Hence
\begin{eqnarray*}
S(W_{(1,\alpha)};v) 
&=& \frac{1}{\vol(W_{(1,\alpha)})} 
\big(\int_{v(F(\alpha))}^\infty + \int_0^{v(F(\alpha))} \big) \vol(\CF^{(t)}W_{(1,\alpha)}) dt \\
&=& f(\alpha)\cdot S(L; v)+ v(F(\alpha)). 
\end{eqnarray*}
Then taking integration for $\alpha \in \BP$ with measure $\DH^g_\BP$ we get
\begin{eqnarray*} 
S^g(W_\bu; v)
&=& \int_\BP S(W_{(1,\alpha)};v) \cdot \DH^g_\BP(d\alpha) \\
&=& \lam\cdot S(L;v) + v(F). 
\end{eqnarray*}
The $g$-weighted DH measure follows from 
$$\vol(W_{(1,\alpha)}) = \vol(f(\alpha)L) = f(\alpha)^{n-r} \vol(L). $$
\end{proof}

\begin{thm}
\label{Theorem: main thm of almost complete}
Under the assumption (\ref{Eqnarray: almost complete assumption}), we have $\lam L = -(K_{X_r}+\D_r+F)$. Moreover, $(X,\D)$ is $g$-weighted K-semistable (K-polystable) if and only if $(X_r, \D_r+F)$ is K-semistable (K-stable or K-polystable). 
\end{thm}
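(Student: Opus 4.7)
The plan has two steps: first, identify the $\IQ$-divisor $\lambda L + F$ on $X_r$ with $-(K_{X_r}+\D_r)$; then use this identification to translate the K-stability criterion for $(X,\D)$ coming from the $g$-weighted version of Theorem \ref{Theorem: main thm} into one for $(X_r, \D_r+F)$.

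\emph{Step 1: the divisor class identity.} From the lemma just above,
\[
S^g(W_\bu; v) \;=\; \lambda\, S(L; v) + v(F) \;=\; S\bigl(\lambda L + F;\, v\bigr)
\]
for every non-trivial divisorial valuation $v \in \Val_{X_r}$, where the second equality uses that $F(\alpha)$ lies in the fixed part of $W_{(1,\alpha)}$ for each $\alpha$. On the other hand, $W_\bu$ is built from $R(X,\D)$ by iterated refinement along the plt-type divisors $X_1,\ldots,X_r$, followed by the shift by $a_i = A_{X,\D}(X_i)$ that defines the normalized linear series. The hypothesis $\Fut_g|_N=0$ — equivalently, the centering condition (\ref{Eqnarray: Int DH^g_P =1, Int alpha_i DH^g_P =0}) — together with the equalities $A(X_i)=S(W^{X_{i-1}}_\bu; X_i)$ used in the proof of Theorem \ref{Theorem: main thm}, ensures that after successive adjunction the normalization shifts cancel exactly. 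Concretely, I would verify $S^g(W_\bu; v) = S\bigl(-K_{X_r}-\D_r;\, v\bigr)$ for all $v$, and then conclude $\lambda L + F \equiv -K_{X_r}-\D_r$ by matching $S$-invariants along sufficiently many divisorial valuations.

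\emph{Step 2: the K-stability equivalence.} With $\lambda L = -(K_{X_r}+\D_r+F)$ in hand, for any non-trivial $v\in \Val_{X_r}$,
\[
\frac{A_{X_r,\D_r+F}(v)}{S(-K_{X_r}-\D_r-F;\,v)}
\;=\;
\frac{A_{X_r,\D_r}(v)-v(F)}{\lambda\,S(L;v)},
\qquad
\frac{A_{X_r,\D_r}(v)}{S^g(W^{X_r}_\bu;\,v)}
\;=\;
\frac{A_{X_r,\D_r}(v)}{\lambda\,S(L;v)+v(F)}.
\]
Subtracting $v(F)\ge 0$ from numerator and denominator, these two ratios are simultaneously $>1$, $\ge 1$, or $=1$. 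The $g$-weighted form of Theorem \ref{Theorem: main thm} characterizes the $g$-weighted K-(semi)stability of $(X,\D)$ precisely by positivity (resp.\ non-negativity) of the second ratio over all $v$, which now converts into K-(semi)stability of the log Fano pair $(X_r, \D_r+F)$ (note that $\lambda L$ ample implies $-(K_{X_r}+\D_r+F)$ is ample). For K-polystability, a non-product-type $\IT$-equivariant destabilizing special test configuration of $(X,\D)$ corresponds, via the isomorphism $\Val^\IT_X \cong \Val_{X_r}\times N_\IR$ and the equality case $\delta=1$, to a genuine destabilization of $(X_r,\D_r+F)$; conversely any strict destabilizer of $(X_r,\D_r+F)$ pulls back (via the twist $v\mapsto v_\xi$) to one of $(X,\D)$.

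\emph{Main obstacle.} The real work is Step 1: upgrading the easy numerical equality of $S$-invariants into an actual divisorial identity $\lambda L = -(K_{X_r}+\D_r+F)$. This involves threading adjunction through the full chain of plt-type refinements $\pi_{i-1}\colon Y_{i-1}\to X_{i-1}$ — matching the restricted anti-canonical classes $-K_{X_i}-\D_i = -(K_{Y_{i-1}}+X_i+\D_{Y_{i-1}})|_{X_i}$ with the fixed parts emerging in the almost-complete decomposition (\ref{Eqnarray: almost complete assumption}) — while invoking the Futaki vanishing to guarantee that the normalization shifts $a_i$ are absorbed into the weighted DH-measure $\DH^g_\BP$. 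Without $\Fut_g|_N=0$, the moment polytope would be off-centre and no single decomposition of the form $\lambda L + F$ could reproduce $S^g(W_\bu;\cdot)$ simultaneously for every valuation.
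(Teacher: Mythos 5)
Your Step 2 is exactly the paper's argument: set $a=A_{X_r,\D_r}(v)$, $b=S^g(W_\bu;v)=\lam S(L;v)+v(F)$, $c=v(F)$, subtract $c$ from numerator and denominator, and invoke the elementary inequality (\ref{Eqnarray: elementary inequality}). But your Step 1 does not establish the divisor identity. You propose to prove $S^g(W_\bu;v)=S(-K_{X_r}-\D_r;v)$ for all $v$ and then ``conclude $\lam L+F\equiv -K_{X_r}-\D_r$ by matching $S$-invariants along sufficiently many divisorial valuations.'' That last inference is not justified: equality of $S$-invariants for a family of valuations is not a standard criterion for numerical equivalence of divisor classes, and you give no argument for it. The paper avoids this entirely by working at the level of divisor classes rather than $S$-invariants: one writes $W_{(1,\alpha)}=(\cdots(-(K_X+\D)-t_1X_1)|_{X_1}-\cdots-t_rX_r)|_{X_r}$ with $t_i=\alpha_i+A_{X,\D}(X_i)$, integrates this identity of classes against $\DH^g_\BP$, and uses the centering conditions (\ref{Eqnarray: Int DH^g_P =1, Int alpha_i DH^g_P =0}) to see that the integral equals $(\cdots(-(K_X+\D)-A_{X,\D}(X_1)X_1)|_{X_1}-\cdots-A_{X,\D}(X_r)X_r)|_{X_r}=-(K_{X_r}+\D_r)$ by adjunction along the plt chain; the same integral equals $\lam L+F$ by (\ref{Eqnarray: almost complete assumption}). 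You have all the right ingredients in hand (the centering condition, adjunction through the refinements), so this is a repairable detour rather than a wrong idea, but as written Step 1 is a gap.

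The K-polystability direction also has a real gap. This theorem does \emph{not} assume $\IT$ is maximal in $\Aut(X,\D)$, so $(X_r,\D_r+F)$ may carry a residual torus action, and K-polystability of $(X_r,\D_r+F)$ is then a statement about reduced uniform stability with respect to \emph{that} torus — it is not detected by $\delta(X_r,\D_r+F)>1$ nor by a naive correspondence of destabilizers, because twists by the residual torus on $X_r$ do not correspond to twists by $\IT$ on $X$. Your sketch (``a non-product destabilizer corresponds via $\Val^\IT_X\cong\Val_{X_r}\times N_\IR$ and the twist $v\mapsto v_\xi$'') only accounts for the $\IT$-twists and would at best prove the statement when $\IT$ is maximal. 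The paper handles the general case by applying Construction \ref{Construction: toric divisor sequence} a second time, to $(X_r,\D_r+F)$ itself, producing $(X_{r+s},\D_{r+s}';W_\bu')$ and showing that the $g$-weighted K-polystability of $(X,\D)$ and the K-polystability of $(X_r,\D_r+F)$ are \emph{both} equivalent to $\delta(X_{r+s},\D_{r+s}';W_\bu')>1$. Some such device is needed; without it the polystability equivalence is unproved in the non-maximal case.
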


\begin{proof}
Recall that $W_\bu$ is defined by (we omit pull-back morphisms)
\begin{eqnarray*} 
W_{(1,\alpha_1,\cdots, \alpha_r)} = (\cdots(-(K_X+\D)-t_1X_1)|_{X_1} - \cdots -t_r X_r)|_{X_r}, 
\end{eqnarray*}
where $t_i=\alpha_i +A_{X,\D}(X_i)$. Taking integration on $\BP$ with measure $\DH^g_\BP$ we get by (\ref{Eqnarray: Int DH^g_P =1, Int alpha_i DH^g_P =0}) 
\begin{eqnarray*} 
\int_\BP W_{(1,\alpha)}\cdot \DH^g_\BP(d \alpha) 
&=& (\cdots(-(K_X+\D)-A_{X,\D}(X_1) X_1)|_{X_1} - \cdots - A_{X,\D}(X_r) X_r)|_{X_r}\\
&=& -(K_{X_r}+\D_r). 
\end{eqnarray*}
Hence we get $ -(K_{X_r}+\D_r) = \lam L + F$ by the assumption (\ref{Eqnarray: almost complete assumption}).

For any $v\in \Val^\circ_{X_r}$, let $a=A_{X_r,\D_r}(v), b=S^g(W_\bu; v)$ and $c=v(F)$. Then 
$$A_{X_r, \D_r +F}(v) = a-c,\quad  S(-(K_{X_r}+\D_r+F); v)= b-c. $$
By the elementary inequality (\ref{Eqnarray: elementary inequality}), we have either 
\begin{eqnarray*} 
\frac{a-c}{b-c} \ge \frac{a}{b} \ge 1
\quad \textrm{or} \quad
\frac{a-c}{b-c} <  \frac{a}{b} < 1. 
\end{eqnarray*}
Hence the equivalence of K-semistability holds. 

For the equivalence of K-polystability, we may assume that the two pairs are both K-semistable. In particular, $(X_r, \D_r'=\D_r+F)$ is a log Fano pair. If the maximal torus of $\Aut(X,\D)$ is of rank $r$, then we are done by Theorem \ref{Theorem: main thm}. Otherwise, applying Construction \ref{Construction: toric divisor sequence} to $(X_r,\D_r')$ we get a third pair $(X_{r+s}, \D_{r+s}')$ and a $\IN\times \IN^{r+s}$-graded linear series $W_\bu'$ on $X_{r+s}$. Arguing as Theorem \ref{Theorem: main thm} we see that the $g$-weighted K-polystability of $(X,\D)$ and the K-polystability of $(X_r,\D_r')$ are both equivalent to $\delta(X_{r+s},\D_{r+s}';W_\bu')>1$. 
\end{proof}

\subsection{Existence of $g$-solitons}

As an application of Theorem \ref{Theorem: main thm} and \ref{Theorem: main thm of almost complete}, we show the existence of $g$-solitons on some Fano $\IT$-varieties of complexity one, generalizing the corresponding results on toric Fano varieties \cite{WZ04,SZ12,BLXZ23}. 

Let $X$ be a Fano $\IT$-variety of complexity one (maximal torus of $\Aut(X)$ is of rank $r=\dim X-1$). Then $X_r\cong \IP^1$ is of Picard number one. In particular, $W^{X_r}_\bu$ satisfies (\ref{Eqnarray: almost complete assumption}) with respect to $\CO_{\IP^1}(1)$. 
Since $\D_r$ is obtained by taking different succeedingly starting at $\D_0=0$, by \cite[(4.4)]{Kol13}, it must be of the form $\D_r = \sum_i (1-\frac{1}{m_i})p_i$ for $m_i\in \IZ_{\ge 2}$ and $p_i\in \IP^1$. By Construction \ref{Construction: toric divisor sequence}, $(X_r,\D_r)$ is a Fano type klt pair. Hence $\Supp \D_r$ contains at most three points (otherwise $\sum_{1\le i\le 4} (1-\frac{1}{m_i}) \ge 2$). By Theorem \ref{Theorem: main thm of almost complete}, $-(K_{X_r}+\D_r+F)=\CO_{\IP^1}(\lam)$ is ample. 

\begin{thm}
Let $X$ be a Fano $\IT$-variety admitting a rank $r=\dim X-1$ torus action. If $\Supp \D_r$ consists of three points, then $X$ is $g$-weighted K-polystable for any weight function $g$. 
\end{thm}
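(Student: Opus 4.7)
The plan is to apply the $g$-weighted version of Theorem~\ref{Theorem: main thm of almost complete} to reduce $g$-weighted K-polystability of $X$ to ordinary K-polystability of an explicit log Fano pair on $\IP^1$. Since $X$ has complexity one, $X_r \cong \IP^1$ has Picard number one and $W^{X_r}_\bu$ is almost complete with respect to $\CO_{\IP^1}(1)$, so the theorem supplies an effective $\IR$-divisor $F$ on $\IP^1$ (depending on $g$) such that $X$ is $g$-weighted K-polystable if and only if $(\IP^1, \D_r + F)$ is K-polystable; the paragraph preceding the statement already records that $-(K_{\IP^1} + \D_r + F)$ is ample, so $(\IP^1, \D_r + F)$ is indeed log Fano. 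Moreover, since $\Supp(\D_r + F) \supseteq \Supp \D_r$ contains three distinct points by hypothesis, any automorphism of $\IP^1$ preserving $\D_r + F$ must fix these three points and is therefore trivial, so $\Aut(\IP^1, \D_r + F)$ is finite. Because every product test configuration of $(\IP^1, \D_r+F)$ is then trivial, K-polystability of this pair coincides with K-stability, i.e.\ with $\delta(\IP^1, \D_r + F) > 1$.

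On $\IP^1$ the delta invariant is fully explicit. Writing $B := \D_r + F = \sum_j b_j q_j$ and $d := 2 - \sum_j b_j$, for any closed point $q \in \IP^1$ one has $A_{(\IP^1, B)}(\ord_q) = 1 - b_q$ and $S(-(K_{\IP^1}+B);\ord_q) = d/2$, so
\[
\delta(\IP^1, B) \;=\; \min_{q \in \Supp B} \frac{2(1 - b_q)}{d}.
\]
The whole problem therefore reduces to the combinatorial inequality $b_q < \sum_{i \neq q} b_i$ for every $q \in \Supp B$.

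The hard part will be verifying this inequality uniformly in the weight function $g$. When $F = 0$ it is a direct check over the short list of log-Fano triples $(m_1,m_2,m_3)$ with $\sum 1/m_i > 1$, namely $(2,2,m)$ for $m \geq 2$ and $(2,3,m)$ for $m \in \{3,4,5\}$; in each case the inequality $1/m_j + 1/m_k - 1/m_i < 1$ (equivalent to the condition above when $F = 0$) is easily verified. For general $g$, the formula $F = \int_\BP F(\alpha)\,\DH^g_\BP(d\alpha)$ presents $F$ as an average, over the moment polytope, of the fixed loci of the isotypic linear series $W^{X_r}_{(1,\alpha)}$; one expects these fixed loci to be supported on the three branch points $p_1, p_2, p_3$ of the rational quotient $X \dashrightarrow \IP^1$, so that $\Supp F \seq \Supp \D_r$. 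Combined with the weight-function constraint $\Fut_g|_N = 0$, which translates into a balance condition on the coefficients of $F$ at the three $p_i$, this should preserve $b_q < \sum_{i \neq q} b_i$ for all admissible $g$.
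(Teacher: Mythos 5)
Your reduction is the same as the paper's: apply Theorem \ref{Theorem: main thm of almost complete} to get the pair $(\IP^1,\D_r+F)$, note it is log Fano, and observe that on $\IP^1$ K-stability of $B=\sum_j b_jq_j$ is exactly the inequality $b_q<\sum_{j\ne q}b_j$ for every $q$. Up to that point everything is fine (the detour through $\Aut(\IP^1,\D_r+F)$ being finite is harmless but unnecessary, since the theorem already lets you conclude from K-stability alone).

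The genuine gap is the last step. For general $g$ you only \emph{conjecture} that $\Supp F\seq\Supp\D_r$ and that the constraint $\Fut_g|_N=0$ "should preserve" the inequality; neither claim is proved, and the first is not even assumed in the paper, which allows $F=\sum_{1\le i\le k}a_ip_i$ with $k>3$ (the fixed parts $F(\alpha)$ need not sit on the three points of $\D_r$). The case analysis over the triples $(2,2,m)$, $(2,3,m)$ only handles $F=0$. What actually closes the argument is much simpler and uses only the ampleness $\lam=\deg\bigl(-K_{\IP^1}-\D_r-F\bigr)>0$ supplied by Theorem \ref{Theorem: main thm of almost complete}: writing $\D_r=\sum_{1\le i\le 3}(1-\tfrac{1}{m_i})p_i$ with $m_i\ge 2$, positivity of $\lam$ gives $1+\sum_i a_i<\tfrac{1}{m_1}+\tfrac{1}{m_2}+\tfrac{1}{m_3}\le\tfrac32$, whence $a_i<\tfrac{1}{m_i}$ for $i\le 3$ and $a_i<\tfrac12$ for $i\ge 4$. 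Thus the total coefficient $b_{p_i}=1-\tfrac{1}{m_i}+a_i$ lies in $[\tfrac12,1)$ at each of the three points and every other coefficient is $<\tfrac12$; since any point then sees at least two other points of coefficient $\ge\tfrac12$ while its own coefficient is $<1$, the inequality $b_q<\sum_{j\ne q}b_j$ holds at every $q$, uniformly in $g$. You should replace your heuristic final paragraph with this computation.
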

\begin{proof}
We denote by $\D_r = \sum_{1\le i\le3} (1- \frac{1}{m_i})p_i$ and $F=\sum_{1\le i\le k} a_i p_i$, where $m_i\in \IZ_{\ge2}$ and $a_i\in \IR_{\ge0}$.
By Theorem \ref{Theorem: main thm of almost complete}, we have
$$0<\lam=\deg(-K_{\IP^1}-\D_r-F) =2 - \sum_{1\le i\le3} (1- \frac{1}{m_i}) -\sum_{1\le i\le k} a_i. $$ 
Hence $1+\sum_{i=1}^k a_i< \frac{1}{m_1}+\frac{1}{m_2}+\frac{1}{m_3}$. In particular, $\frac{1}{2}\le1-\frac{1}{m_i}+a_i < 1$ for any $1\le i\le 3$, and $a_i<\frac{1}{2}$ for any $i\ge 4$. Hence by \cite[Theorem 3]{Li15}, $(\IP^1, \D_r+F)$ is K-stable. The proof is finished by Theorem \ref{Theorem: main thm of almost complete}. 
\end{proof}

As the second application, we generalize some results of the existence of K\"ahler-Ricci solitons on Fano threefolds in \cite{MW23,MW24} to the existence of $g$-solitons for arbitrary weight function $g:\BP\to \IR_{>0}$. We need the following well-known result. 

\begin{lem}
\label{Lemma: curve GIT and K-stability}
Let $C\seq \IP^2$ or $\IP^1\times \IP^1$ be a plane cubic curve or a biconic curve. For any $0<c<1$, the pair $(\IP^2,cC)$ or $(\IP^1\times \IP^1,cC)$ is K-(semi/poly)stable if and only if $C$ is GIT-(semi/poly)stable. 
\end{lem}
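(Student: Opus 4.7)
The plan is to reduce K-(semi/poly)stability of $(X,cC)$ to a Hilbert--Mumford weight computation for the action of $G:=\Aut(X)$ on the complete linear system $|-K_X|$. In both cases the divisor satisfies $C\sim_\IQ -K_X$, so $-K_X-cC=(1-c)(-K_X)$, and $G$ (equal to $\mathrm{PGL}(3)$ or $(\mathrm{PGL}(2)\times\mathrm{PGL}(2))\rtimes\IZ/2$) is reductive and semisimple, equipping $|-K_X|$ with its canonical GIT problem.

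By \cite{LX14} one may test K-stability against special test configurations $(\CX,c\CC,\CL)$ of $(X,cC)$. The crucial step is to show that the central fiber satisfies $\CX_0\cong X$ throughout $c\in(0,1)$. Granting this rigidity, $\CX\cong X\times\IA^1$ with the $\IG_m$-action induced by a one-parameter subgroup $\lambda:\IG_m\to G$, and the central fiber divisor is $\CC_0=\lim_{t\to 0}\lambda(t)\cdot C$. For such a product-type test configuration a direct intersection-theoretic computation on $X\times\IP^1$ yields
\[
\Fut(\CX,c\CC,\CL)=\kappa\cdot c\cdot\mu^{-K_X}(\lambda,C),
\]
with $\kappa>0$ depending only on $\dim X$ and $c$, and $\mu^{-K_X}(\lambda,C)$ the Hilbert--Mumford weight of $\lambda$ at $[C]\in|-K_X|$. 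The contribution of $X$ alone to $\Fut$ vanishes because $G$ is semisimple, hence the Futaki character of $X$ is identically zero; the entire contribution comes from the boundary $cC$ and equals its GIT weight up to the positive factor $c$. The Hilbert--Mumford numerical criterion then translates K-(semi)stability of $(X,cC)$ into GIT-(semi)stability of $C$; for polystability one notes that $\Fut=0$ combined with the test configuration being of product type in the log-pair sense means $\CC_0\in G\cdot C$, matching the characterization of GIT-polystability by closedness of orbits in the semistable locus.

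The main obstacle is establishing the rigidity $\CX_0\cong X$ throughout the range $c\in(0,1)$. A priori $(\CX_0,c\CC_0)$ could be a nontrivial klt Fano degeneration (for instance $\IP(1,1,4)$ as a degeneration of $\IP^2$, or the quadric cone as a degeneration of $\IP^1\times\IP^1$) equipped with an appropriate boundary. Ruling this out is essentially the content of the K-moduli construction for the log Fano families $(\IP^2,cC)$ with $C$ a cubic and $(\IP^1\times\IP^1,cC)$ with $C$ a biconic; in both cases the K-moduli space is known to coincide with the classical GIT moduli of plane cubics (respectively biconics) throughout $c\in(0,1)$, which is precisely the content of the lemma.
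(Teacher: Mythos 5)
There is a genuine gap, and you have in effect flagged it yourself. The paper's proof of this lemma is a one-line citation: for plane cubics it invokes \cite[Example 4.5]{ADL19}, and asserts that the same argument handles biconic curves in $\IP^1\times\IP^1$. Your proposal instead tries to give a direct argument, but the entire weight of that argument rests on the ``rigidity'' claim that every relevant special test configuration of $(\IP^2,cC)$ or $(\IP^1\times\IP^1,cC)$ has central fiber isomorphic to the original surface, so that one is reduced to product test configurations and a Hilbert--Mumford weight computation. You do not prove this claim; your closing paragraph concedes that ruling out degenerations such as $\IP(1,1,4)$ or the quadric cone ``is precisely the content of the lemma.'' As written, the argument is therefore circular: the step you defer is not a technical verification but the substance of the statement. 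Note also that the rigidity claim is false if read literally --- special test configurations with central fiber $\IP(1,1,4)$, say, certainly exist; what the K-moduli comparison actually provides is that no such degeneration occurs among K-semistable limits for any $c\in(0,1)$ (absence of walls), and establishing that requires the interpolation argument for smooth $C$ together with the wall-crossing framework of \cite{ADL19}, not a Futaki computation on product configurations.

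The constructive fix is simply to do what the paper does: cite \cite[Example 4.5]{ADL19} for cubics in $\IP^2$ and observe that the identical argument (interpolation between the K-polystable ambient surface and the klt log Calabi--Yau pair $(X,C)$, plus the wall-crossing analysis showing no walls in $(0,1)$) applies to biconic curves in $\IP^1\times\IP^1$. Your Hilbert--Mumford computation for product test configurations is correct and is indeed a component of that theory, but it cannot stand alone as a proof of the lemma.
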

\begin{proof}
It follows directly from \cite[Example 4.5]{ADL19} for plane cubic curves $C\seq\IP^2$, and one may prove for biconic curves $C\seq \IP^1\times\IP^1$ with the same argument. 
\end{proof}

\begin{ex}[Fano threefolds №2.28 and №3.14]\rm 
\label{Example: 2.28 and 3.14}
Let $H\seq \IP^3$ be a plane, and $C\seq H$ be a cubic curve (may not smooth). Let $X=\Bl_C \IP^3$ and $\tX =\Bl_p X$ for some $p\in \IP^3\setminus H$. Then $X$ is in №2.28 and $\tX$ is in №3.14 of Mori-Mukai's list when $C$ is smooth. The $\IG_m$-action on $\IP^3$ connecting $H$ and $p$ lifts to both $X,\tX$, and it is a maximal torus of $\Aut(X)$ and $\Aut(\tX)$. The strict transform $\tH\cong \IP^2$ of $H$ is a toric divisor of this $\IG_m$-action. 

By \cite[Section 6.2 and 6.4]{MW23}, the refinement of $-K_X$ and $-K_{\tX}$ by $\tH$ are 
\begin{eqnarray*}
W^{\tH}_{(1,\alpha)} = 
\left\{ \begin{array}{ll}
H^0\Big(\IP^2, \CO(3+2\alpha)\Big) 
& -1\le \alpha < 0,\\
H^0\Big(\IP^2, \CO(3-\alpha)\Big) + \alpha\cdot C
& 0\le \alpha\le \alpha_0, 
\end{array} \right. 
\end{eqnarray*} 
where $\alpha_0=3$ for $X$ and $\alpha_0=1$ for $\tX$. The moment polytope is $\BP=[-1,\alpha_0]$ and the DH measure is $\DH_\BP(d\alpha) = \vol(W^{\tH}_{(1,\alpha)})d\alpha$. The almost complete assumption (\ref{Eqnarray: almost complete assumption}) holds in this case. 

For $X$, we fix a weight function $g:[-1,3]\to \IR_{>0}$.  By (\ref{Eqnarray: Fut_g = 0}), we have
\begin{eqnarray}
\label{Eqnarray: Ex2.28}
\int_{-1}^{0}\alpha\cdot g(\alpha) (3+2\alpha)^2 d\alpha +
\int_{0}^{3}\alpha\cdot g(\alpha) (3-\alpha)^2 d\alpha = 0.  
\end{eqnarray} 
Then by Theorem \ref{Theorem: main thm of almost complete}, we see that $X$ is $g$-weighted K-polystable if and only if $(\IP^2,\mu C)$ is K-stable or K-polystable, where 
\begin{eqnarray*}
\mu &=& \frac{1}{\BV^g} \int_{0}^{3}\alpha\cdot g(\alpha) (3-\alpha)^2 d\alpha>0, \\
\BV^g &=& \int_{-1}^{0} g(\alpha) (3+2\alpha)^2 d\alpha +
\int_{0}^{3} g(\alpha) (3-\alpha)^2 d\alpha. 
\end{eqnarray*} 
Moreover, by (\ref{Eqnarray: Ex2.28}) we have 
$$\mu = \frac{1}{\BV^g}\int_{-1}^{0}(-\alpha)\cdot g(\alpha) (3+2\alpha)^2 d\alpha \le 
\frac{1}{\BV^g}\int_{-1}^{0} g(\alpha) (3+2\alpha)^2 d\alpha <1. $$
Hence by Lemma \ref{Lemma: curve GIT and K-stability}, we conclude that $X$ is $g$-weighted K-polystable if and only if $C$ is GIT-stable or polystable. This also holds for $\tX$. 
\end{ex}

\begin{rmk}\rm
These examples are generalization of \cite[Theorem 1.1 and 1.3]{MW23}, which say that for any continuous weight function $g:\BP\to \IR_{>0}$, the smooth Fano threefolds in №2.28 and №3.14 all admit $g$-soliton by \cite{HL23,BLXZ23} if the base field $\Ik=\IC$. In particular, if $g(\alpha) = e^{-\alpha\cdot\xi_0}$ for the soliton candidate $\xi_0\in N_\IR$, the $g$-solitons reveal the K\"ahler-Ricci solitons. 
\end{rmk}

Similar results hold for the optimal degenerations of Fano threefolds in №2.23(a). 

\begin{ex}[Optimal degenerations of Fano threefolds in №2.23(a)] \rm 
Let $Q_0\seq \IP^4$ be a cone over a smooth quadric surface $H\seq \IP^3$, and $C\seq H\cong \IP^1\times \IP^1$ be a biconic curve (i.e. $C\in |\CO_{\IP^1\times \IP^1}(2,2)|$). 
Then $X_0 = \Bl_C Q_0$ is the optimal degeneration of the K-unstable Fano variety $X=\Bl_C Q$ by \cite[Corollary 1.4]{MW24}, where $Q$ is a smoothing of $Q_0$ passing through $C$. The $\IG_m$-action of $Q_0$ along the cone direction lifts to $X_0$ naturally, and it is a maximal torus of $\Aut(X_0)$. 
The strict transform $\tH$ of $H$ is a toric divisor of this $\IT$-action.

By \cite[Section 3.3]{MW24}, the refinement of $-K_{X_0}$ by the toric divisor $\tH = \IP^1 \times \IP^1$ is 
\begin{eqnarray*}
W_{(1,\alpha)} = 
\left\{ \begin{array}{ll}
H^0\Big(\IP^1\times \IP^1, \CO(2+\alpha)\Big) 
& -1\le \alpha < 0,\\
H^0\Big(\IP^1\times \IP^1, \CO(2-\alpha)\Big) + \alpha\cdot C
& 0\le \alpha\le 2. 
\end{array} \right. 
\end{eqnarray*} 
where $C = H \cap E_C$. For any continuous weight function $g:\BP=[-1,2]\to \IR_{>0}$, with the same argument of Example \ref{Example: 2.28 and 3.14}, we see that $X_0$ is $g$-weighted K-polystable if and only if $C\seq \IP^1\times\IP^1$ is GIT-stable or polystable. 
\end{ex}

\begin{rmk}\rm
The key ingredients for the independence of the weight function $g$ in the above examples are that, firstly, the toric divisor $\tH\seq X$ has log discrepancy $1$; secondly, whenever $0<c<1$, the log Fano pairs $(\IP^2,cC_3)$ and $(\IP^1\times \IP^1, cC_{2,2})$ are K-stable for smooth $C$. 
\end{rmk}

However, the $g$-weighted K-stability depends on the choice of $g$ in general. 

\begin{ex}[Optimal degenerations of Fano threefolds in №2.23(b)] \rm 
Let $Q\seq \IP^4$ be a smooth quadric threefold and $H,H'\seq Q$ be mutually distinct singular hyperplane sections. Hence $H\cong H' \cong \IP(1,1,2)$ and they intersect at a smooth conic. Let $C_0 \seq Q$ be the non-reduced curve defined by $C_0=2H'|_H$, and $X_0=\Bl_{C_0} Q$. By \cite[Corollary 1.8]{MW24}, this is the optimal degeneration of the K-unstable Fano threefold $X=\Bl_C Q$ where $C=Q'|_H$ for some smooth quadric section $Q'$ not passing through the vertex of $H=\IP(1,1,2)$. There is a $\IG_m^2$-action on $Q$ under which $C_0$ is invariant, hence lifting to $X_0$. We consider the $\IG_m$-action on $X_0$ along the cone direction of $H$ and leaving each point of $H'$ invariant. Then the exceptional divisor $E$ obtained by blowing up $Q$ at the vertex of $H$ is a toric divisor of the $\IG_m$-action. 

By \cite[Section 3.5]{MW24}, the refinement of $-K_{X_0}$ by $E$ is  
\begin{eqnarray*}
W_{(1,\alpha)} = 
\left\{ \begin{array}{ll}
H^0\Big(\IP^2, \CO(3+\alpha)\Big) 
& -3\le \alpha < -2,\\
H^0\Big(\IP^2, \CO(\frac{1}{3}(5+\alpha))\Big) + \frac{1}{3}(2+\alpha)\cdot C_2
& -2\le \alpha\le 1, \\
H^0\Big(\IP^2, \CO(3-\alpha)\Big) + \alpha\cdot C_2
& 1\le \alpha\le 3. 
\end{array} \right. 
\end{eqnarray*} 
where $C_2=\tH|_{E}$ is a smooth plane conic. Let $g:\BP=[-3,3] \to \IR_{>0}$ be a continuous weight function. Then by (\ref{Eqnarray: Fut_g = 0}) we have 
\begin{eqnarray}
\label{Eqnarray: 2.23(b) Fut_g=0}
 \int_{-3}^{-2} \alpha \cdot g(\alpha) (3+\alpha)^2 d\alpha
+ \int_{-2}^{1} \alpha \cdot g(\alpha)\frac{1}{9}(5+\alpha)^2 d\alpha 
+ \int_1^3 \alpha \cdot g(\alpha) (3-\alpha)^2 d\alpha 
=0. 
\end{eqnarray} 
By Theorem \ref{Theorem: main thm of almost complete}, we see that $X_0$ is $g$-weighted K-polystable if and only if $(\IP^2, \mu C_2)$ is K-polystable, where 
\begin{eqnarray*}
\mu &=& \frac{1}{\BV^g} 
\Big(\int_{-2}^{1}\frac{1}{3}(2+\alpha)\cdot g(\alpha)\cdot \frac{1}{9}(5+\alpha)^2 d\alpha 
+ \int_1^3 \alpha \cdot g(\alpha) (3-\alpha)^2 d\alpha \Big) > 0, \\
\BV^g &=& \int_{-3}^{-2} g(\alpha) (3+\alpha)^2 d\alpha
+ \int_{-2}^{1} g(\alpha)\frac{1}{9}(5+\alpha)^2 d\alpha 
+ \int_1^3 g(\alpha) (3-\alpha)^2 d\alpha. 
\end{eqnarray*} 
Following from Example \ref{Example: plane conics}, we know that $(\IP^2, \mu C_2)$ is K-polystable if and only if $0<\mu<\frac{3}{4}$. Hence $X_0$ is $g$-weighted K-polystable for any continuous weight function $g:\BP\to \IR_{>0}$ such that $\mu<\frac{3}{4}$. 
In particular, if $g(\alpha) = e^{-\alpha\cdot\eta_0}$, where $\eta_0 \in \IR$ is determined by (\ref{Eqnarray: 2.23(b) Fut_g=0}). Then we can give an explicit estimate of $\mu$ and show that 
$\mu < 0.739237<\frac{3}{4}$. Hence $(X_0,\eta_0)$ is $g$-weighted K-polystable and admits a K\"ahler-Ricci soliton \cite[Theorem 1.7]{MW24}. 
\end{ex}


\section{Qdlt Fano type models}
\label{Section: Qdlt Fano type models}

In this section, we give another construction of $(X_r,\D_r,W^{X_r}_\bu)$ in Theorem \ref{Theorem: Intro. main thm} depending on the deep theory of higher rank finite generation developed by \cite{LXZ22,XZ22,Xu24}. 



Let $(X,\D)$ be a log Fano pair with a $\IT=\IG_m^r$-action, $M\cong \IZ^r$ and $N=M^\vee$ be the weight and co-weight lattices. 
For any face $F$ of the moment polytope $\BP\seq M_\IR$, we define its {\it normal cone} $\sigma_F\seq N_\IR$ by  
\begin{eqnarray*}
\sigma_F = \{\xi\in \IN_\IR\mid \la\alpha,\xi\ra=\la\alpha',\xi\ra >0,\, \forall \alpha,\alpha' \in F \}, 
\end{eqnarray*} 
which is a closed convex cone in $N_\IR$. 
The {\it moment fan} $\BF$ of the $\IT$-action is the set of normal cones corresponding to faces of $\BP$. Let $\sigma \in \BF$, then for all vectors $\xi$ in the relative interior of $\sigma$, the toric valuations $\wt_\xi$ have the same center $Z_\sigma$. Moreover, all the valuations in the boundary of $\sigma$ pass through $Z_\sigma$. 


\begin{thm} 
For any $\xi\in N_\IR$ of rational rank $r$, there exists an linearly independent sequence of primitive vectors $\xi_1,\cdots, \xi_r\in  N$ lying in a cone $\sigma \in \BF$, and a qdlt Fano type model $(Y,E=E_1+\cdots +E_r) \to (X,\D)$ such that $\xi=\sum_{1\le i\le r} a_i \xi_i$ for some $a_i>0$, and $\wt_{\xi_i}=\ord_{E_i}$. 
\end{thm}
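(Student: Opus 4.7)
I would attack this in three steps: first locate $\xi$ in a top-dimensional cone of the moment fan, second choose $\xi_1,\ldots,\xi_r$ as primitive generators of a simplicial subcone containing $\xi$, and third invoke higher rank finite generation from \cite{LXZ22,XZ22,Xu24} to produce the qdlt Fano type model simultaneously extracting the corresponding toric divisors.

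\textbf{Locating $\xi$ and choosing $\xi_1,\ldots,\xi_r$.} Every cone $\sigma\in\BF$ is a rational polyhedral cone, so its linear span is a $\IQ$-rational subspace of $N_\IR$ of dimension $\dim\sigma$. Since $\xi$ has rational rank $r$, it is not contained in any proper $\IQ$-rational subspace of $N_\IR$; hence $\xi$ lies in the relative interior of some maximal cone $\sigma\in\BF$ of dimension $r$. If $\sigma$ is not already simplicial I subdivide it into rational simplicial subcones (a standard combinatorial operation in $N$). As $\xi$ has rational rank $r$ it avoids every proper face of every subcone, so it lies in the relative interior of a unique simplicial subcone $\sigma'$. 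Let $\xi_1,\ldots,\xi_r\in N$ be the primitive generators of the rays of $\sigma'$; these are linearly independent and $\xi=\sum_{i=1}^r a_i\xi_i$ with $a_i>0$. By (\ref{Eqnarray: toric divisor}) each $\wt_{\xi_i}$ is the order of vanishing along a special divisor $E_i$ over $(X,\D)$, and $E_i$ is an lc place of some $\IQ$-complement.

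\textbf{The qdlt Fano type model and the main obstacle.} The third step is the substantial one. Because all the $\xi_i$ sit inside a single cone $\sigma\in\BF$, the valuations $\wt_{\xi_1},\ldots,\wt_{\xi_r}$ share the center $Z_\sigma$ and are simultaneously compatible: the multi-graded filtered algebra
\[
\bigoplus_{\lam\in\IN^r}\bigcap_{i=1}^r \CF_{\wt_{\xi_i}}^{\lam_i}R_\bu
\]
is a $\IT$-weight refinement of the anticanonical ring along the dual description of $\sigma'$, and finite generation becomes transparent from the toric geometry of $\BP$ and $\sigma'$. I would then quote the higher rank finite generation theorem of \cite{LXZ22,XZ22} (see also \cite[Ch.~4]{Xu24}), which upgrades this finite generation into a projective birational morphism $\pi:Y\to X$ extracting exactly $E_1,\ldots,E_r$ such that $(Y,\pi_*^{-1}\D+E)$ is qdlt for $E=E_1+\cdots+E_r$ and $Y$ is of Fano type; this is the required qdlt Fano type model, and the identity $\wt_{\xi_i}=\ord_{E_i}$ is built into the construction. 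The main obstacle is precisely this passage from compatible-special-toric-valuations to a single qdlt Fano type model — Steps 1 and 2 are elementary convex geometry, while Step 3 really does require the full higher rank finite generation machinery, even though the torus-equivariance makes the finite generation hypothesis easy to verify.
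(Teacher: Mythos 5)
Your Steps 1 and 2 match the paper's starting point (locate $\xi$ in the interior of an $r$-dimensional cone $\sigma\in\BF$, then pick primitive generators of a simplicial subcone containing $\xi$), but there is a genuine gap in how you pass from the chosen $\xi_1,\ldots,\xi_r$ to the qdlt Fano type model, and it traces back to the fact that your subdivision of $\sigma$ is arbitrary. The mechanism that actually produces the model in \cite{XZ22} is not ``finite generation of the multi-filtered algebra implies a birational extraction'': the inputs to \cite[Lemma 3.17]{XZ22} and \cite[Lemma 3.15]{XZ22} are a log smooth model $(W,F)$ together with a \emph{special} $\IQ$-complement $\Gamma_W$ such that the divisors to be extracted are quasi-monomial on $(W,F)$ and the whole cone they span lies in $\QM(W,F)\cap\LC(X,\D+\Gamma_W)$. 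The paper therefore proceeds in the opposite order from you: it first applies \cite[Lemma 4.3]{LXZ22} to $\wt_\xi$ to obtain $(W,F)$ and $\Gamma_W$ with $\wt_\xi\in\QM(W,F)\cap\LC(X,\D+\Gamma_W)$, then subdivides $\sigma$ by intersecting with $\QM(W,F)$, and only then chooses $\xi_1,\ldots,\xi_r$ inside the resulting subcone $\sigma_0\ni\xi$ with $\sigma_0\seq\QM(W,F)$. With an arbitrary simplicial subdivision you have no guarantee that your $\xi_i$ are simultaneously lc places of a single special complement arranged monomially on one log smooth model, which is exactly what the toroidal extraction of \cite[Lemma 3.17]{XZ22} and the qdlt upgrade of \cite[Lemma 3.15]{XZ22} require. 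Each $\wt_{\xi_i}$ being individually special (an lc place of \emph{some} complement, as you note) is not enough.

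To repair the argument, replace your free subdivision by the one induced by $\QM(W,F)\cap\sigma$ after first invoking \cite[Lemma 4.3]{LXZ22} for $\wt_\xi$, and replace the appeal to ``finite generation of $\bigoplus_\lam\bigcap_i\CF^{\lam_i}_{\wt_{\xi_i}}R_\bu$'' by the chain: special complement plus monomial structure $\Rightarrow$ toroidal morphism extracting exactly $E_1,\ldots,E_r$ with $\QM(Z,E)=\LC(X,\D+\Gamma)$ $\Rightarrow$ qdlt Fano type model. One further small point: the paper chooses the coefficients $a_i$ to be $\IQ$-linearly independent (which is automatic here since $\xi$ has rational rank $r$ and the $\xi_i$ lie in $N$), and this is what guarantees the subcone containing $\xi$ is genuinely $r$-dimensional; your rational-rank argument for why $\xi$ avoids proper faces is fine.
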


\begin{proof}
This is an application of \cite[Theorem 3.14]{XZ22}. 
Since $\xi\in N_\IR$ is of rational rank $r$, there exists $\sigma \in \BF$ of dimension $r$ such that the interior of $\sigma$ containing $\xi$. Note that $\wt_\xi$ induces a product $r$-step degeneration of $(X,\D)$. By \cite[Lemma 4.3]{LXZ22}, there is a $\IT$-equivariant log smooth model $\tau: (W,F)\to (X,\D)$ such that $(X,\D)$ admits a special $\IQ$-complement $\Gamma_W$ with respect to $(W,F)$ and $\wt_\xi\in \QM(W,F)\cap \LC(X,\D+\Gamma_W)$. The intersection of $\QM(W,F)$ and $\sigma$ in $\Val_X$ leads to a sub-division of $\sigma$. Hence there exists an $r$-dimensional subcone $\xi \in \sigma_0 \seq \sigma$ such that $\wt$ induces an embedding $\sigma_0 \seq \QM(W, F)$. 
We may choose a linearly independent sequence of primitive vectors $\xi_1,\cdots, \xi_r\in  \sigma_0\cap N$
such that $\xi=\sum_{1\le i\le r}a_i\xi_i$ for some $\IQ$-linearly independence positive real numbers $a_1,\cdots,a_r \in \IR_{>0}$. Let $E_i$ be the toric divisor corresponding to $\xi_i$, then $\wt_{\xi_i} = \ord_{E_i}$. 

Hence by \cite[Lemma 3.17]{XZ22}, there exists a birational toroidal morphism $\rho: Z\to (W, F)$ and a $\IQ$-complement $\Gamma$ such that 
\begin{itemize}
\item $\rho$ extracts exactly the divisors $E_1,\cdots, E_r$; 
\item $\Gamma$ is special with respect to $(Z,E=E_1+\cdots+E_r)$; and 
\item $\QM(Z,E) = \LC(X,\D+\Gamma)$. 
\end{itemize}
Finally by \cite[Lemma 3.15]{XZ22}, we get a qdlt Fano type model $(Y,E=E_1+\cdots+E_r)$ of $(X,\D)$. 
\end{proof}

Now we could give a simpler construction of $(X_r,\D_r,W^{X_r}_\bu)$. 

\begin{const} \rm
Let $(Y,E=E_1+\cdots+E_r)\to (X,\D)$ be a qdlt Fano type model such that $\ord_{E_i} = \wt_{\xi_i}$ for linearly independent primitive vectors $\xi_1,\cdots,\xi_r \in \IN$. Then there exists an effective $\IQ$-divisor $D_0$ on $Y$ such that $(Y,D_0+E)$ is qdlt, $D_0+E\ge \pi_*^{-1}\D$, and $-(K_Y+D_0+E)$ is ample. We may take $E_i$-adjunction to $(Y,D_0+E)$ succeedingly and get qdlt Fano pairs $(X_i,D_i+E_{>i})$ where $X_i=E_1\cap\cdots\cap E_i$, $D_i=\Diff_{X_i}(D_{i-1})$ and $E_{>i} = (E_{i+1}+\cdots +E_r)|_{X_i}$. Hence $E_{>r}=0$ and $(X_r,D_r)$ is a log Fano pair. Under this sequence of adjunctions, $(Y,\D_0=\pi_*^{-1}\D - \sum_i \ord_{E_i}(\D)E_i)$ generates a sequence of klt Fano type pairs $(X_i,\D_i)$ where $\D_i=\Diff_{X_i}(\D_{i-1})\le D_i$. 

Since $X_{i+1}\seq (X_i, D_i+E_{>i})$ is a component of $E_{>i}$, it is of plt-type on $X_i$ by \cite[Lemma 2.3]{XZ22}. 
By taking $X_i$-refinement succeedingly, we get a $\IN\times \IN^i$-graded linear series $W^{X_i}_\bu$ on $X_i$. 
\end{const}


Following the argument of Lemma \ref{Lemma: plt Fano type model}, the triple $(X_r,\D_r,W^{X_r}_\bu)$ constructed above satisfies Lemma \ref{Lemma: A=A, S=S}. Hence Theorem \ref{Theorem: Intro. main thm} and \ref{Theorem: Intro. main thm. almost complete} holds for this $(X_r,\D_r,W^{X_r}_\bu)$.

\bibliographystyle{alpha}
\bibliography{ref}

\end{document}